\definecolor{mygreen}{rgb}{0.1,0.75,0.2}
 \newtheorem{thm}{Theorem}[section]
 \newtheorem{lem}[thm]{Lemma}
 \newtheorem{prop}[thm]{Proposition}
 \newtheorem{assumption}[thm]{Assumption}
 \newtheorem{rem}[thm]{Remark}
\newtheorem{defn}[thm]{Definition}
\numberwithin{equation}{section}
\DeclareMathOperator{\diam}{diam}
\DeclareMathOperator{\vol}{Vol}
\DeclareMathOperator{\ric}{Ric}
\DeclareMathOperator{\gap}{gap}
\newcommand{\Gammaf}{F}
\newcommand{\hs}{\mathcal{H}}
\newcommand{\bR}{\mathbb{R}}
\newcommand{\la}{\langle}
\newcommand{\ra}{\rangle}
\newcommand{\pt}{\partial}
\newcommand{\eps}{\varepsilon}
\newcommand{\ud}{\,\mathrm{d}}
\newcommand{\8}{\infty}
\newcommand{\mm}{\mathcal{M}}
\newcommand{\nn}{\mathcal{N}}
\newcommand{\mx}{\mathbf{x}}
\newcommand{\my}{\mathbf{y}}
\newcommand{\rhon}{\rho^{\nn}}
\newcommand{\lmd}{\eta}
\newcommand{\divn}{\text{div}_{\nn}}
\newcommand{\sch}{\frac{\pi_i+ \pi_j}{2|\my_i-\my_j|} |\Gamma_{ij}|}
\newcommand{\tsch}{\frac{\pi_i+ \pi_j}{2|\my_i-\my_j|} |\tilde{\Gamma}_{ij}|}
\begin{document}

\title[Data-driven  solvers for Langevin dynamics on manifold]{Data-driven efficient solvers  for  Langevin dynamics on manifold in high dimensions}

\author[Y. Gao]{Yuan Gao}
\address{Department of Mathematics, Purdue University, West Lafayette, IN}
\email{gao662@purdue.edu}

\author[J.-G. Liu]{Jian-Guo Liu}
\address{Department of Mathematics and Department of
  Physics, Duke University, Durham, NC}
\email{jliu@math.duke.edu}

\author[N. Wu]{Nan Wu}
\address{Department of Mathematical Sciences, The University of Texas at Dallas, Richardson, TX}
\email{nan.wu@utdallas.edu}

\date{\today}

\begin{abstract}
{ We study the Langevin dynamics of a physical system with manifold structure $\mm\subset\bR^p$ based
on collected sample points   $\{\mx_i\}_{i=1}^n \subset \mm$ that probe the unknown manifold $\mm$. Through the diffusion map, we first learn the reaction coordinates $\{\my_i\}_{i=1}^n\subset \nn$ corresponding to $\{\mx_i\}_{i=1}^n$, where $\nn$ is a manifold diffeomorphic to $\mm$ and isometrically embedded in $\bR^\ell$ with $\ell \ll p$.  The induced Langevin dynamics on $\nn$ in terms of the reaction coordinates captures the slow time scale dynamics such as conformational changes in biochemical reactions.   To construct an efficient and stable approximation for the Langevin dynamics on $\nn$, we  leverage the corresponding Fokker-Planck equation on the manifold $\nn$ in terms of the reaction coordinates $\my$. We propose an implementable, unconditionally stable, data-driven finite volume  scheme for this Fokker-Planck equation,  which automatically incorporates the manifold structure of $\nn$.  Furthermore, we provide a weighted $L^2$ convergence analysis of the finite volume  scheme to the Fokker-Planck equation on $\nn$. The proposed finite volume  scheme leads to a Markov chain on $\{\my_i\}_{i=1}^n$ with an approximated transition probability and jump rate between the nearest neighbor points. After an unconditionally stable explicit time discretization, the data-driven finite volume scheme gives an approximated Markov process for the Langevin dynamics on $\nn$ and the approximated Markov process enjoys detailed balance, ergodicity, and other good properties.}
\end{abstract}

\keywords{Diffusion map, 
reaction coordinates,
Voronoi tessellation,
unconditionally stable explicit finite volume  scheme,
random walk on point clouds,
exponential convergence}

\maketitle
\section{Introduction}

\subsection{Problem set up and goals}

{ We study a complex chemical, biological or physical system $P$ which can be described by $p$-dimensional variables $\mx$ in $\bR^p$ with  $p\gg 1 $. Due to some equality and inequality constraints, we assume the essential structure of the system $P$ is an unknown $d$ dimensional closed smooth Riemannian submanifold $\mm$ of $\mathbb{R}^p$ \cite{coifman2006diffusion, CKLMN}. The manifold $\mm$ is unknown in the sense that we do not know the charts and the metric of $\mm$. The essential physical motions in the system $P$ are those slow time scale structural changes or conformational changes rather than the fast time scale motions such as vibrations.  Therefore, despite the high dimensionality of $P$ in practice, we can find some intrinsic low dimensional variables, called reaction coordinates, to represent those essential structural or conformational changes in a low dimensional space \cite{coifman2006diffusion, CKLMN, Mauro2011}. For instance,  a typical one dimensional reaction coordinate is the distance between a carbon center and a nucleophile in an $\text{S}_\text{N}2$ reaction (one simple  nucleophilic substitution reaction mechanism); {see also the conformational transitions of alanine dipeptide representing by two backbone dihedral angles \cite{gao21}.}   There are many other collective physical/chemical quantities, such as bond length/angle, dihedral angles, and intermolecular distance, that can be used as the reaction coordinates to represent the whole process of conformational transitions or chemical reactions.  Mathematically, the reaction coordinates should be a smooth embedding $\my=\Phi(\mathbf x): \mm\hookrightarrow \mathbb{R}^\ell$ with $\ell \ll p$ to preserve the topological structure of the underlying manifold.  Then, $\nn= \Phi(\mm)$ is a submanifold of  $\bR^\ell$ with the metric induced by the Euclidean metric of $\bR^\ell$. The reaction coordinates can be realized through the nonlinear dimension reduction algorithms {\cite{nadler2006diffusion}}. Suppose $\{\mathbf x_i\}_{i=1}^n$ are $n$ data points well distributed on the unknown manifold $\mm \subset \mathbb{R}^p$, while these $n$ points are collected by some sampling methods. A nonlinear dimension reduction algorithm constructs an embedding $\Phi$ by using the coordinates of $\{\mathbf x_i\}_{i=1}^n$ in $\mathbb{R}^p$ so that we can present the high dimensional data $\{\mathbf x_i\}_{i=1}^n\subset\mm\subset\bR^p$ as $\my_i=\Phi(\mx_i)\subset \nn\subset \bR^\ell$ in the low dimensional space.}

{ 
We assume the dynamics for the physical system $P$ can  be described by a continuous strong Markov process on $\mm\subset\bR^p$.
Particularly, the simplest and widely used physical model is the following  over-damped Langevin dynamics  with a drift determined by some potentials $U$ on $\mm$:
\begin{equation}
\label{sde-x}
\ud{\mx_t}= -\nabla_\mm U(\mx_t)\ud t + \sqrt{2kT} \ud_{\mm} B_t.
\end{equation}
We explain the notations  in \eqref{sde-x} below.
Let  $\{\tau^\mm_i;\, 1\leq i \leq d\}$ be an orthonormal basis of the tangent plane $T_{\mx_t}\mm$. Denote  $\nabla_\mm:= \sum_{i=1}^d \tau^{\mm}_i \nabla_{\tau^{\mm}_i}= \sum_{i=1}^d \tau^{\mm}_i \otimes \tau^{\mm}_i \nabla$ as { the  surface gradient } and $\nabla_{\tau^{\mm}_i}=\tau^{\mm}_i \cdot \nabla$ as the tangential derivative in the direction of $\tau^{\mm}_i$.
Let  $k$ be the Boltzmann constant, $T$ be the temperature \cite{Ebook} and $\ud_{\mm} B_t$ be a Brownian motion on $\mm$. This Brownian motion on manifold can be realized by the projection of the standard Brownian motion $B_t$ in the ambient space $\bR^p$:
\begin{equation}
\ud_\mm B_t :=\sum_{i=1}^d \tau^\mm_{i}(\mx_t)\otimes \tau^\mm_i(\mx_t) \circ \ud B_t,
\end{equation}
where $\circ$ is understood in the Stratonovich sense in the stochastic integral \cite[p.19, p.78]{hsu2002stochastic}. We refer the readers to  \cite{bakry2014analysis} for the general Langevin SDEs on Riemannian manifolds.

The potential $U(\mx)$ is also known as the energy landscape for the physical system $P$, which is usually very complicated and indicates all the possible (meta)stable states of a physical system. For instance, in a simple nucleophilic substitution reaction mechanism, the states of reactants and products are two stable states in the  energy landscape \cite{Ebook}. A saddle point state on the energy landscape is called the transition state, whose value determines the energy barrier in a chemical reaction.  In this paper, we assume the output of the potential $U$ at each data point $\{\mx_i\}_{i=1}^n$ is known.  The equilibrium  of this system $P$, also known as {the invariant} probability density measure, is $\rho_{\8}(\mathbf x) \propto e^{-\frac{U(\mathbf x)}{kT}},\, \mathbf x \in \mm$.

Suppose we learn the reaction coordinates $\my=\Phi(\mx), \mx \in \mm$. The diffeomorphism $\Phi:\mm\to \nn$ {induces} a map $\Phi_*$ from the space $\Gamma(T\mm)$ of the smooth vector fields on $\mm$ to the space $\Gamma(T\nn)$ of the smooth vector fields on $\nn$ such that for any $f\in C^{\8}(\nn)$ and $V\in \Gamma(T\mm)$
\begin{equation}
(\Phi_* V)f(\my)= V(f\circ \Phi)(\mx), \quad \my=\Phi(\mx).
\end{equation}
The Stratonovich formulation {transforms} consistently under diffeomorphism $\Phi$ \cite[p. 20]{hsu2002stochastic}.  {Notice $\tau^\nn_i\in \mathbb{R}^\ell$ is defined by the induced map $\Phi_*$ and $B_t$ is the $\ell$-dimensional Brownian motion.} 
Therefore, instead of considering \eqref{sde-x} on $\mm$ directly, we consider the SDE on $\nn$ 
\begin{equation}\label{sde-y}
\ud {\my_t} = - \nabla_\nn U_{\nn}(\my_t) \ud t + \sqrt{2kT} \sum_{i=1}^d \tau^\nn_{i}(\my_t)\otimes \tau^\nn_i(\my_t) \circ \ud B_t,
\end{equation}
where $\nabla_\nn:= \sum_{i=1}^d \tau^{\nn}_i \nabla_{\tau^{\nn}_i}= \sum_{i=1}^d \tau^{\nn}_i \otimes \tau^{\nn}_i \nabla$ is the surface gradient, $\nabla_{\tau^{\nn}_i}=\tau^{\nn}_i \cdot \nabla$ is the tangential derivative in the direction of $\tau^{\nn}_i$, and  $U_\nn$ is the induced potential on manifold $\nn$ by the composition
\begin{equation}
U_\nn(\my):=U(\mx)=U(\Phi^{-1}(\my)).
\end{equation}

The main goal of this paper is to accurately simulate the induced Langevin dynamics \eqref{sde-y} in terms of the reaction coordinates $\my$ and the information of the potential $U$. To simulate the SDE  \eqref{sde-y} on $\nn$ without exact manifold information,  one of the most natural ways is to construct an approximated stochastic process on the points $\{\my_i\}_{i=1}^n$. However, the standard Euler–Maruyama method on manifold  can not provide a stable simulation. Hence, our strategy for constructing a stochastic process over $\{\my_i\}_{i=1}^n$ is described as follows: (i) we detour to approximate the corresponding Fokker-Planck equation on the manifold with a finite volume scheme; (ii) we construct an approximated Voronoi tessellation associated with $\{\my_i\}_{i=1}^n$; (iii) we construct the transition probability and the jump rate from the finite volume scheme.

By Ito's formula, the SDE \eqref{sde-y} gives the following Fokker-Planck equation, which is the governing equation for the { density $\rhon_t:=\rho^\nn(\my,t)$},
\begin{equation}\label{FP-N}
\pt_t \rhon_t = \divn (kT\nabla_\nn \rhon_t + \rhon_t \nabla_\nn U_{\nn})=: \text{FP}^{\nn} \rhon_t,
\end{equation}
where $\divn$ is the surface divergence defined as $\divn \xi = \sum_{i=1}^d \tau_i^\nn \cdot \nabla_{\tau_i^\nn} \xi.$
One equivalent form of \eqref{FP-N} is the relative entropy formulation
\begin{equation}\label{FP-N-equivlant}
\pt_t \rhon_t = kT \divn \left( e^{-\frac{U_\nn}{kT}} \nabla_\nn (\rhon_t e^{\frac{U_\nn}{kT}}) \right)= \text{FP}^\nn \rhon_t.
\end{equation}

Then the main issue is  to
simulate the Fokker-Planck equation \eqref{FP-N} on $\nn$ whose solution  $\rho^{\nn}_t(\my)$ drives any initial density $\rho^{\nn}_0$ to the  invariant measure $\rho^{\nn}_\8(\my) \propto  e^{-\frac{U_{\nn}(\my)}{kT}}$.
After designing a finite volume scheme for the Fokker-Planck equation \eqref{FP-N} on $\nn$,   we construct an  approximated transition probability and jump rate from it.
This approximated Markov process on the manifold automatically incorporates both the manifold structure and the equilibrium  information. It enables some implementable data-driven computations on the manifold  such as finding the optimal cluster-cluster coarse-grained network,  cf. \cite{Deuflhard_Weber_2005, E_Li_Vanden-Eijnden_2008, Lafon06, li2009probabilistic, NoeLu11, Noe11} and  finding the transition path and energy landscape of chemical reactions, cf. \cite{ekeland1999convex, weinan2006, MMS2009, E_Vanden-Eijnden_2010, gao21, GL22t, GL22, gao2022selection}.

\subsection{Practical difficulties and mathematical implementations}
  The first difficulty is that we are not able to acquire all the information about the system $P$. Hence, we assume that we can sample $n$ points $\{\mx_i\}_{i=1}^n$ from $\mm$ based on a density function on $\mm$ with lower and upper bounds so that the data points are well distributed on $\mm$. In Section \ref{sec2}, { we first show that the diffusion map can approximate an embedding $\Phi$ of the manifold $\mm$. Then,} we apply the diffusion map algorithm  \cite{coifman2006diffusion} on $\{\mx_i\}_{i=1}^n$ to find the reaction coordinates so that we have $\{\my_i=\Phi(\mx_i)\}_{i=1}^n \subset \nn =\Phi(\mm) \subset \mathbb{R}^\ell$. Note that $\{\my_i\}_{i=1}^n$ can also be regarded as the samples based on a  density function on $\nn$ with lower and upper bounds.

Next,  we focus on simulating the Fokker-Planck equation \eqref{FP-N} with a given equilibrium potential $U_\nn(\my)$. To find the trajectory $\rho_t^\nn$, we need to solve the Fokker-Planck equation  on {the manifold} $\nn \subset \mathbb{R}^\ell$. Our method uses the data points $\{\my_i\}_{i=1}^n\subset\nn$ to construct a discrete approximation of the Fokker-Planck equation \eqref{FP-N}. It is proved that the data points are well-distributed on $\nn$  whenever the points are sampled based on a density function with lower and upper bounds \cite{Dejan15, liu2019rate}.  Hence, we can construct a ``regularly shaped'' Voronoi tessellation on $\nn$ from $\{\my_i\}_{i=1}^n\subset\nn$. With the help of such Voronoi tessellation, we introduce a finite volume scheme by applying { the} relative entropy formulation and finite volume method  to \eqref{FP-N}.  The finite volume  scheme assigns a transition probability and a jump rate for an approximated Markov process  on $\{\my_i\}_{i=1}^n$, i.e., random walk between the nearest neighbor points. In Section \ref{sec_mp}, we prove all the good properties of the approximated Markov process on  $\{\my_i\}_{i=1}^n$ including detailed balance, ergodicity, $L^1$-contraction and $\chi^2$-divergence dissipation law.  

To obtain an implementable finite volume scheme, an approximated Voronoi tessellation associated with $\{\my_i\}_{i=1}^n$ needs to be constructed with high accuracy. By using the Euclidean coordinates of $\{\my_i\}_{i=1}^n$, each Voronoi cell can be approximated by a polygon in a tangent space of $\nn$; {see Section \ref{sec5.3} for detailed error estimates for the approximated cell volume and face area.}  Therefore, an approximated transition probability based on the volume of each polygon and the areas of its faces can be assigned over $\{\my_i\}_{i=1}^n$ and leads to an implementable finite volume  scheme \eqref{mp_pron} for the Fokker-Planck equation \eqref{FP-N}; see Section \ref{sec5} and Theorem \ref{mainthm_mp} for consistence and convergence analysis for this implementable finite volume  scheme. We also provide an unconditionally stable explicit time descretization for the finite volume  scheme  based on the  detailed balance property of the Markov process in Section \ref{sec_explicit}. { This explicit scheme is very efficient and enjoys a mass conservation law, unconditional maximum principle and exponential convergence to  equilibrium.  At last, to show the accuracy of the finite volume  scheme, we simulate challenging numerical examples including datasets on a dumbbell, a Klein bottle and a sphere in Section \ref{sec_simu}}.

 The approximated transition probability between the nearest neighbor points  for the Markov process on $\{\my_i\}_{i=1}^n\subset\nn$ reveals the manifold structure and enables us to efficiently conduct computations such as { clustering, coarse-graining and finding the minimal energy path on the manifold.}   Notice this transition probability between the nearest neighbor points not only incorporates the manifold information but also gives an adapted graph network on the manifold.

The remaining part of the  paper will be organized as follows. In Section \ref{sec2}, we use diffusion map
to learn the reaction coordinates $\my$. In Section \ref{sec5}, we  propose { a data-driven solver for the Fokker-Planck equation on  manifold $\nn$, which assigns an approximated transition probability and a jump rate for an approximated Markov process  on $\{\my_i\}_{i=1}^n$.  In Section \ref{sec_simu}, we also provide several simulation results for the Fokker-Planck dynamics on manifolds learned from point clouds.} All the technical lemmas are provided in Appendix for completeness.   All the commonly used notations are listed in Table \ref{Table:Notations} for the sake of clarity.
}

\begin{table}[ht]
\caption{Commonly used notations in this paper.}\label{Table:Notations}
\begin{tabular}{|l|l|} 
\hline $Symbols$ & $Meaning$\\ 
\hline\hline 
$\mathbb{R}^p$, $\mathbb{R}^\ell$  & High (low) dimensional ambient spaces \\ 
$d$ & Dimension of the Riemannian manifolds \\ 
$\mm$, $\nn$ &  $d$-dimensional smooth closed Riemannian submanifolds of the Euclidean spaces\\ 
$\mx$, $\my$ & Points on $\mm$ and $\nn$ respectively\\
$dV_{\mm}$, $dV_{\nn}$ & Volume forms on $\mm$ and $\nn$ respectively \\
$\Delta$ & Laplace Beltrami operator of a manifold \\
$\lambda_i$, $\psi_i$ & The eigenvalues and the corresponding orthonormal (in $L^2$) eigenfunctions of $\Delta$ \\
$\Phi$ & Reaction coordinates (Smooth embedding of a manifold) \\ 
$X$, $Y$ & Random variables with the range $\mm$ and $\nn$ respectively \\
{$\rho^*$, $\rho^{\mm}_t$}  & Probability density functions on $\mm$ \\
{$\rho^{**}$, $\rho^{\nn}_t$} & Probability density functions on $\nn$ \\
$n\in\mathbb{N}$ & Number of data points sampled from $\mm$ based on $\rho$ \\ 
$\{\mx_1, \cdots, \mx_n\}$ & Data points sampled from $\mm$ based on $\rho$ \\ 
$\epsilon$ & The bandwidth in the diffusion map \\
$K_\epsilon$ & Kernel in the diffusion map \\
$W_{\epsilon, \alpha}$ & Affinity matrix in diffusion map with $\alpha$ normalization\\
$L_{\epsilon, \alpha}$  & diffusion map matrix \\
$\lambda_{i,n,\epsilon}$, $v_{i,n,\epsilon}$ & The eigenvalues and the corresponding orthonormal eigenvectors in $l^2$ of { $\frac{I-L_{\epsilon,1}}{\epsilon^2}$}\\
$C_i$ & the Voronoi cell around the point $\my_i$ on the manifold $\nn$ \\
$\Gamma_{ij}$ & the Voronoi face between $\my_i$ and $\my_j$ on the manifold $\nn$ \\
$r$ & bandwidth in the  Voronoi cell approximation algorithm \\
$s$ & threshold in the  Voronoi cell approximation algorithm \\
$\iota_k$ & the projection map in the  Voronoi cell approximation algorithm \\
$P_{ij}, \lmd_i$ & the transition probability and jump rate of constructed Markov chain \\
\hline 
\end{tabular}
\end{table}

\section{Review of nonlinear dimension reduction and diffusion map}\label{sec2}
In this section, we focus on learning the reaction coordinates $\my$ for the $d$-dimensional manifold $\nn \subset \mathbb{R}^\ell$ to extract the  conformational changes with slow time scale from other fast time scale vibrations.
We first introduce the basic idea about the nonlinear dimension reduction under the following assumption.
\begin{assumption}\label{assumption DM}
Let $\mm$ be a $d$ dimensional smooth closed Riemannian submanifold of $\mathbb{R}^p$. Suppose that $\rho^*$ is a smooth probability density function on the manifold $\mm$. We assume that $\rho^*$ is bounded from below and from above, i.e. $\rho_m \leq \rho* \leq \rho_M$.  Let $\{\mx_1 \cdots, \mx_n\} \subset \mm \overset{i.i.d.}{\sim} \rho^*$.
\end{assumption}

Nonlinear dimension reduction algorithms construct maps which map  $\{\mx_1 \cdots, \mx_n\}$ to some low dimensional space $\mathbb{R}^\ell$ {while preserving the topological or geometric structure of the underlying manifold.} There are a lot of well known dimension reduction algorithms, for instance, ISOMAP \cite{Tenenbaum_deSilva_Langford:2000}, eigenmap \cite{Belkin_Niyogi:2003}, locally linear embedding (LLE) \cite{Roweis_Saul:2000} and its variations like Hessian LLE \cite{Donoho_Grimes:2003}, vector diffusion map \cite{singer2012vector, singer2016spectral}. In this work, we focus on the algorithm diffusion map which is introduced by Coifman and Lafon \cite{coifman2006diffusion}.  The algorithm of the diffusion map can be described in the following steps:
\begin{enumerate}[(i)]
\item
For $\mx,\mx' \in \mm$, we define $K_{\epsilon}(\mx, \mx')=\exp(-\frac{\|\mx-\mx'\|^2_{\mathbb{R}^p}}{4\epsilon^2})$, where $\epsilon>0$ is the bandwidth.
\item 
Define 
$
q_{\epsilon}(\mx):=\sum_{i=1}^{n} K_{\epsilon}(\mx,\mx_i).
$
We define the  affinity matrix which is { the} $n \times n$ matrix $W_{\epsilon,\alpha}$:
$
W_{\epsilon,\alpha,ij}\,:=\frac{K_{\epsilon}(\mx_i,\mx_j)}{q^\alpha_{\epsilon}(\mx_i) q^\alpha_{\epsilon}(\mx_j)}.
$
This step is called the $\alpha$-normalization.
\item
Define {the $n \times n$ diagonal matrix $D$ with diagonal entries}
$
D_{\epsilon,\alpha,ii}=\sum_{j=1}^{n} W_{\epsilon,\alpha,ij}.
$
Let 
\begin{align}\label{L matrix}
L_{\epsilon,\alpha}=D_{\epsilon,\alpha}^{-1}W_{\epsilon,\alpha}.
\end{align}
\item
To reduce the dimension of the dataset $\{\mx_1 \cdots, \mx_n\} $. We choose $\alpha=1$. Denote 
\begin{align}
\lambda_{0,n,\epsilon} \leq \cdots, \leq \lambda_{n-1,n,\epsilon}
\end{align}
to be the eigenvalues of $\frac{I-L_{\epsilon,1}}{\epsilon^2}$. We  find the first $\ell$  corresponding eigenvectors of $\frac{I-L_{\epsilon,1}}{\epsilon^2}$, namely, $\{v_{j,n,\epsilon}\}^\ell_{j=1}$. Then the map
\begin{align}
\mx_i \rightarrow (v_{1,n,\epsilon}(i), \cdots, v_{\ell,n,\epsilon}(i))
\end{align} 
reduces the dimension of the dataset into the Euclidean space $\mathbb{R}^\ell$.
\end{enumerate}

\begin{rem}
Note that the matrix $L_{\epsilon,1}$ in \eqref{L matrix} may not be symmetric in general. Therefore, in the implementation,  we use the matrix $\tilde{L}_{\epsilon,1}=D_{\epsilon,1}^{-1/2}W_{\epsilon,1} D_{\epsilon,1}^{-1/2}$. $\tilde{L}_{\epsilon,1}$ is similar to $L_{\epsilon,1}$ and is symmetric. Therefore, they share the same eigenvalues and the corresponding eigenvectors differ by $D_{\epsilon,1}^{-1/2}$. 
\end{rem}

{Let $\Delta$ be the Laplace-Beltrami operator of $\mm$.  Let $\{\lambda_i\}_{i=0}^\infty$ be the eigenvalues of $-\Delta$, and 
\begin{align}
\Delta \psi_i =-\lambda_i \psi_i,
\end{align}
where $\psi_i$ is the corresponding eigenfunction normalized in $L^2(\mm)$. We have $0=\lambda_0 < \lambda_1\leq \lambda_2 \leq  \cdots$. Note that $\psi_0=\frac{1}{\sqrt{\mm}}$ is a constant. 

In the rest of this section, we will provide a justification that the diffusion map 
\begin{align}
\mx_i \rightarrow (v_{1,n,\epsilon}(i), \cdots, v_{\ell,n,\epsilon}(i))
\end{align} 
approximates an embedding of $\mm$ into a Euclidean space.  The justification consists of two steps. First, we review the results about the spectral convergence from $\frac{I-L_{\epsilon,1}}{\epsilon^2}$  to $-\Delta$. Intuitively, these results show that the eigenpairs of  $\frac{I-L_{\epsilon,1}}{\epsilon^2}$ approximate the corresponding eigenpair of $-\Delta$.  Second, we discuss the result that shows the eigenfunctions of $-\Delta$ can be used to construct an embedding of $\mm$. Since $\psi_0$ is a constant, based on the justification, the first eigenvector $v_{0,n,\epsilon}$  of $\frac{I-L_{\epsilon,1}}{\epsilon^2}$ is not used in the construction of the diffusion map.

We start from the theoretical results that relate the diffusion map to the Laplace Beltrami operator when the samples are from a submanifold. It is proved in \cite{coifman2006diffusion} and \cite{singer2012vector} that  $\frac{I-L_{\epsilon,1}}{\epsilon^2}$ converges pointwisely to $-\Delta$ in the following sense.
\begin{thm}\label{pointwise convergence}(Coifman-Lafon,\cite{coifman2006diffusion}, Singer-Wu,\cite{singer2012vector})
Suppose $\alpha=1$. Under Assumption \ref{assumption DM}, for $f \in C^3(\mm)$, if $\frac{\sqrt{\log n}}{\sqrt{n}\epsilon^{\frac{d}{2}+2}} \rightarrow 0$ and $\epsilon \rightarrow 0$ as $n \rightarrow \infty$, then with probability greater than $1-\frac{1}{n^2}$, for all $i=1, \cdots, n$, we have
\begin{align}
\frac{f(\mx_i)-\sum_{j=1}^n L_{\epsilon,1}(i,j)f(\mx_j)}{\epsilon^2}=-\Delta f(\mx_i)+O(\epsilon)+O(\frac{\sqrt{\log n}}{\sqrt{n}\epsilon^{\frac{d}{2}+2}}).
\end{align}
\end{thm}
The $\alpha=1$ normalization in the diffusion map comes from the idea of density estimation. When $\alpha$ is chosen to be $1$,  the impact of the nonuniform density $\rho^*$ is removed. Hence, the Laplace-Beltrami operator in the previous theorem is not contaminated by the probability density function $\rho^*$.

A stronger version of the convergence theorem in \cite{singer2016spectral} shows the spectral convergence of the diffusion map in $L^2$ sense. At last, in \cite{dunson2019spectral}, it shows the  $L^\infty$ spectral convergence result based on the following definition. 
\begin{defn}\label{1/rho normalization}
Under Assumption \ref{assumption DM}, suppose $v_{j,n,\epsilon}$ is an eigenvector of $\frac{I-L_{\epsilon,1}}{\epsilon^2}$ which is normalized in the $l^2$ norm. Let 
$N_k=|B^{\mathbb{R}^p}_{\epsilon}(\mx_k) \cap \{\mx_1, \cdots, \mx_n\}|$, the number of points in the $\epsilon$ ball in the ambient space.
Then, we define the $l^2$ norm of $v_{j,n,\epsilon}$ with respect to the inverse estimated probability density $1/\hat{\rho^*}$ as: 
\begin{align}
\|v_{j,n,\epsilon}\|_{l^2(1/\hat{\rho^*})}:=\sqrt{\frac{|S^{d-1}|\epsilon^d}{d} \sum_{k=1}^n \frac{v_{j,n,\epsilon}(k)}{N_k}}, \nonumber 
\end{align}
where $|S^{d-1}|$ is the volume of the $d-1$ dimensional standard sphere.
Define the renormalization of $v_{j,n,\epsilon}$ in the $l^2$ norm  with respect to the inverse estimated probability density $1/\hat{\rho^*}$ as: 
\begin{align} \label{normalized v i n epsilon}
V_{j,n,\epsilon}:=\frac{v_{j,n,\epsilon}}{\|v_{j,n,\epsilon}\|_{l^2(1/\hat{\rho^*})}}.  
\end{align}

\end{defn}

Intuitively, $v_{j,n,\epsilon}$ is a  discretization of some function on $\mm$ while $\|v_{j,n,\epsilon}\|_{l^2(1/\hat{\rho^*})}$ is an approximation of the $L^2(\mm)$ norm of the function. Hence, $V_{j,n,\epsilon}$ can be regarded as a discretization of some function that is normalized in  $L^2(\mm)$. On the other hand, the vector $\vec{\psi}_j=(\psi_j(\mx_1), \cdots, \psi_j(\mx_n))^\top$ is a discretization of $\psi_j$ which is also  normalized in the $L^2(\mm)$. Therefore, it is reasonable to compare $V_{j,n,\epsilon}$ and $\vec{\psi}_j$  rather than $v_{j,n,\epsilon}$ and $\vec{\psi}_j$. In the following theorem, it shows that, on $\mm$, if we fix $K$, the bandwidth $\epsilon$ is small enough based on $K$ and the number of data points $n$ is large enough based on $\epsilon$,  then for all $0 \leq j < K$,  with high probability, $\lambda_{j,n,\epsilon}$ is  an approximation of the $j$-th eigenvalue $\lambda_j$ of $-\Delta$ and $V_{j,n,\epsilon}$ is an approximation of $\vec{\psi}_j$ . 

\begin{thm} \label{spectral convergence of L on closed manifold} (Dunson-Wu-Wu, \cite{dunson2019spectral}) Under Assumption \ref{assumption DM}, suppose all the eigenvalues of $\Delta$ are simple. Let $(\lambda_j, \psi_j)$ be the j-th eigenpair of $-\Delta$ with $\psi_j$ normalized in $L^2(\mm)$. Let $L_{\epsilon,1}$ be the matrix in \eqref{L matrix}. Let $(\lambda_{j,n,\epsilon}, V_{j,n,\epsilon})$ be the j-th eigenpair of $\frac{I-L_{\epsilon,1}}{\epsilon^2}$ with $ V_{j,n,\epsilon}$ normalized as in  Definition \ref{1/rho normalization}.  Fix a positive integer $K$, let $\mathsf \Gamma_K:=\min_{1 \leq j \leq K}\textup{dist}(\lambda_j, \sigma(-\Delta)\setminus \{\lambda_j\})$, where $\sigma(-\Delta)$ is the spectrum of $-\Delta$. Suppose  
\begin{align}
\epsilon \leq \mathcal{K}_1 \min \Bigg(\bigg(\frac{\min(\mathsf\Gamma_K,1)}{\mathcal{K}_2+\lambda_K^{d/2+5}}\bigg)^2, \frac{1}{(\mathcal{K}_3+\lambda_K^{(5d+7)/4})^2}\Bigg),  \label{relation epsilon and K}
\end{align}
where  $\mathcal{K}_1$ and $\mathcal{K}_2,\mathcal{K}_3 >1$  are constants depending on $d$, the lower bound of the p.d.f. $\rho_m$, the $C^2$ norm of p.d.f. , the volume, the injectivity radius, the  curvature, and the second fundamental form of the manifold.

If $n$ is sufficiently large so that $\epsilon=\epsilon(n) \geq (\frac{\log n}{n})^{\frac{1}{4d+13}}$,  then with probability greater than $1-n^{-2}$, 
 for all $0 \leq j < K$  
\begin{align}
|\lambda_{j,n,\epsilon}-\lambda_{j}|\leq \mathcal{K}_4 \epsilon^{3/2}. \nonumber
\end{align}

If $n$ is sufficiently large so that $\epsilon=\epsilon(n) \geq (\frac{\log n}{n})^{\frac{1}{4d+8}}$,  then with probability greater than $1-n^{-2}$, 
 there are $a_j \in \{1,-1\}$ such that  for all $0 \leq j < K$
 \begin{align}
\max_{1 \leq i\leq n}|a_j V_{j,n,\epsilon}(i)-\psi_{j}(x_i)|\leq  \mathcal{K}_5 \epsilon^{1/2}. \nonumber 
\end{align}
$\mathcal{K}_4$  depends  on $d$, the diameter of the manifold and the lower bound and the $C^2$ norm of the p.d.f..  $\mathcal{K}_5$  depends  on $d$, the diameter and the volume of the manifold,  and the lower bound and the $C^2$ norm of the p.d.f..   
\end{thm}

\begin{rem} Note that in the above theorem,  the coefficients $\mathcal{K}_4$ and $\mathcal{K}_5$ only depend on the geometry of the manifold and the data points distribution on the manifold. They are independent of the eigenvalues and the eigengaps of $\Delta$. In the spectral convergence analysis,  the dependence on  the eigenvalues and the eigengaps of $\Delta$ is reflected from the relation \eqref{relation epsilon and K}.
The relation implies that $\epsilon$ should be smaller when $K$ increases. 

Moreover, the above theorem assumes that the eigenvalues of $\Delta$ are simple for notational simplicity. In the case when the eigenvalues are not simple, the same theorem still works by introducing an orthogonal transformation on the eigenspace. The readers may refer to Remark 4  in \cite{cheng2021eigen} for details. 
\end{rem}

The matrix $\frac{I-L_{\epsilon,1}}{\epsilon^2}$ can be regarded as the density corrected graph Laplacian on the complete undirected graph with vertices $\{\mx_1 \cdots, \mx_n\}$ and Gaussian weights on the edges. Hence, the above theorem discusses the spectral convergence of a density corrected graph Laplacian to the Laplace-Beltrami operator in the $L^\infty$ sense. We also refer the readers to \cite{trillos2020error, calder2019improved, cheng2021eigen} which discuss the spectral convergence rates of the graph Laplacians to the Laplace Beltrami operator in the $L^2$ sense and \cite{calder2020lipschitz} which is another work discussing  the spectral convergence rate of the graph Laplacian to the Laplace-Beltrami operator in the $L^\infty$ sense. 

Next, we review some results in spectral geometry. Based on the work of \cite{berard1994embedding}, \cite{jones2008manifold}, \cite{bates2014embedding} and \cite{portegies2016embeddings}, we know that the eigenfunctions of $\Delta$ can be used to construct an embedding of the manifold into a Euclidean space. More explicitly, we describe the following theorem in \cite{bates2014embedding}. The readers may refer to Appendix \ref{appendix A} for more detailed discussions about the relevant theorems in \cite{berard1994embedding}, \cite{jones2008manifold} and \cite{portegies2016embeddings}. 

\begin{thm}(Bates, \cite{bates2014embedding})\label{Bates embedding}
Suppose $\mm$ is a $d$ dimensional smooth closed Riemannian manifold.  Suppose that the Ricci curvature of $\mm$ has lower bound $\ric_{\mm} \geq (d-1)k$,  the injectivity radius of $\mm$ has lower bound $i(\mm) \geq i_0$ and the volume of $\mm$ has upper bound $\vol(\mm) \leq V$.  There is a $C(d, k, i_0, V)$ such that if $q\geq C$, for $\mx \in \mm$
\begin{align}
\Psi_q(\mx)=(\psi_1(\mx), \cdots,\psi_q(\mx)), 
\end{align}
is a smooth embedding of $\mm$ into $\mathbb{R}^q$.
\end{thm}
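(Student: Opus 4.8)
This is Bates' theorem \cite{bates2014embedding}, refining the heat-kernel embedding of B\'erard--Besson--Gallot \cite{berard1994embedding}; here is the route I would take. Everything rests on a package of \emph{uniform} estimates whose constants depend only on $(d,k,i_0,V)$. First I would record the geometric consequences of the hypotheses: Bishop--Gromov gives $\vol(B_r(\mx))\le c_2 r^d$ for small $r$, while $\ric_{\mm}\ge(d-1)k$ together with $i(\mm)\ge i_0$ gives the matching lower bound $\vol(B_r(\mx))\ge c_1 r^d$ for $r\le i_0$; packing a maximal net against $\vol(\mm)\le V$ then yields a diameter bound $\diam(\mm)\le D$. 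From this one obtains, uniformly over the class: (a) two-sided Li--Yau Gaussian bounds on the heat kernel $p_t(\mx,\my)=\sum_{i\ge0}e^{-\lambda_i t}\psi_i(\mx)\psi_i(\my)$, where the Gaussian \emph{upper} bound uses the volume \emph{lower} bound, hence $i_0$; (b) two-sided eigenvalue bounds $c_3 i^{2/d}\le\lambda_i\le c_4 i^{2/d}$ (Cheng above; Cheeger--Li--Yau plus Weyl below); (c) uniform sup-, gradient-, and Hessian bounds $\|\psi_i\|_\infty\le c_5\lambda_i^{d/4}$, $\|\nabla\psi_i\|_\infty\le c_6\lambda_i^{(d+2)/4}$, $\|\nabla^2\psi_i\|_\infty\le c_7\lambda_i^{(d+4)/4}$, gotten by writing $\psi_i=e^{\lambda_i t}e^{t\Delta}\psi_i$, differentiating under the heat kernel, and inserting $t=\lambda_i^{-1}$ into the uniform parametrix estimates. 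Finally it is convenient to pass to the weighted truncations $\Phi_{t,q}(\mx):=(e^{-\lambda_1 t/2}\psi_1(\mx),\dots,e^{-\lambda_q t/2}\psi_q(\mx))$: since $\Phi_{t,q}=A\circ\Psi_q$ for a fixed diagonal linear isomorphism $A$ of $\bR^q$, $\Psi_q$ is an embedding iff $\Phi_{t,q}$ is.

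Next I would show $\Phi_{t,q}$ is an immersion with uniform constants. Its pullback quadratic form is $\sum_{i\le q}e^{-\lambda_i t}\,d\psi_i\otimes d\psi_i$, which by (b)--(c) converges as $q\to\infty$, in the $C^2$ norm for each fixed $t$, to $\sum_{i\ge1}e^{-\lambda_i t}\,d\psi_i\otimes d\psi_i=(d_\mx d_\my p_t)\big|_{\text{diag}}$; the heat parametrix identifies this as $\tfrac12(4\pi t)^{-d/2}t^{-1}g+O(t^{-d/2})$, uniformly, hence $\ge c_8 t^{-d/2-1}g$ for $t\le t_0$, and similarly $\sum_{i\le q}e^{-\lambda_i t}|\nabla^2\psi_i|^2\le Ct^{-d/2-2}$. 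Fixing $t=t_0$ and then $q_1=q_1(d,k,i_0,V)$ so large that the $C^2$ tails $\sum_{i>q}e^{-\lambda_i t_0}(\cdots)$ drop below half the main terms, I get for all $q\ge q_1$ that $\Phi_{t_0,q}$ is an immersion with $|d\Phi_{t_0,q}(v)|\ge a_0|v|$ and $|\nabla^2\Phi_{t_0,q}|\le b_0$, the constants uniform. Expanding $s\mapsto\Phi_{t_0,q}(\gamma(s))$ along unit-speed geodesics gives $|\Phi_{t_0,q}(\gamma(s))-\Phi_{t_0,q}(\gamma(0))|\ge a_0 s-\tfrac12 b_0 s^2>0$ for $0<s<2a_0/b_0$, so $\Phi_{t_0,q}$ is injective on every geodesic ball of the uniform radius $\delta_0:=\tfrac12\min(2a_0/b_0,\,i_0)$.

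It remains to separate pairs with $d(\mx,\my)\ge\delta_0$. If $\Psi_q(\mx)=\Psi_q(\my)$, i.e. $\psi_i(\mx)=\psi_i(\my)$ for $1\le i\le q$, then by Parseval in the second variable
\[
\|p_t(\mx,\cdot)-p_t(\my,\cdot)\|_{L^2(\mm)}^2=\sum_{i>q}e^{-2\lambda_i t}\big(\psi_i(\mx)-\psi_i(\my)\big)^2\le 4\sum_{i>q}e^{-2\lambda_i t}\|\psi_i\|_\infty^2=:\eta(q,t),
\]
with $\eta(q,t)\to0$ as $q\to\infty$ for each fixed $t$, by (b)--(c). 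But the same computation gives $\|p_t(\mx,\cdot)-p_t(\my,\cdot)\|_{L^2}^2=p_{2t}(\mx,\mx)+p_{2t}(\my,\my)-2p_{2t}(\mx,\my)$, which the two-sided Li--Yau bounds force to be $\ge 2c_9 t^{-d/2}-2c_{10}t^{-d/2}e^{-\delta_0^2/(c_{11}t)}\ge c_9 t^{-d/2}$ once $t\le t_1(\delta_0)$. Fixing such a $t$ and then $q_2=q_2(d,k,i_0,V)$ with $\eta(q_2,t)<c_9 t^{-d/2}$ yields a contradiction for $q\ge q_2$. Hence for $q\ge C:=\max(q_1,q_2)$ the map $\Psi_q$ is an immersion (Step 2) and injective (close pairs by Step 2, far pairs just shown), and an injective immersion of a compact manifold is a smooth embedding.

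The genuinely hard part is Step 1: obtaining the heat-kernel and eigenfunction estimates with constants depending \emph{only} on $(d,k,i_0,V)$, not on the individual $\mm$. The Ricci lower bound controls the Bishop--Gromov volumes and the Li--Yau gradient estimate; the injectivity radius bound is what upgrades the on-diagonal lower bound for $p_t$ into a genuinely \emph{two-sided} Gaussian estimate (through a ball-volume lower bound) and supplies local parametrix control at a definite scale, which is needed both for the uniform Hessian bounds in (c) and for the uniform positivity and remainder control of $d_\mx d_\my p_t$ on the diagonal in Step 2; and the volume bound, via the diameter bound, is what makes the eigenvalue counting in (b) uniform. With that package in hand, Steps 2 and 3 are soft.
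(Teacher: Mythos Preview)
The paper does not prove this statement at all: Theorem~\ref{Bates embedding} is quoted verbatim from Bates \cite{bates2014embedding} and used as a black box to justify the existence of the embedding dimension $\ell$. There is therefore nothing in the paper to compare your argument against.

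That said, your sketch is essentially the route Bates takes, and it is sound at the level of strategy. The three-step architecture---uniform heat-kernel/eigenfunction estimates depending only on $(d,k,i_0,V)$, immersion via the pullback metric $\sum_{i\le q}e^{-\lambda_i t}\,d\psi_i\otimes d\psi_i$ and its identification with the diagonal restriction of $d_\mx d_\my p_t$, then separation of far-apart pairs through the $L^2$ identity $\|p_t(\mx,\cdot)-p_t(\my,\cdot)\|_{L^2}^2=p_{2t}(\mx,\mx)+p_{2t}(\my,\my)-2p_{2t}(\mx,\my)$ combined with two-sided Gaussian bounds---is exactly the B\'erard--Besson--Gallot machinery that Bates truncates and makes uniform over the class. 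Your passage from $\Phi_{t,q}$ back to $\Psi_q$ via the diagonal isomorphism is also the correct reduction. The only place I would tighten the writing is the diameter bound: it does follow from $i_0$, the Ricci lower bound, and $V$, but the one-line ``packing a maximal net'' hides a short connectivity argument (a maximal $i_0$-net has at most $V/(c_1(i_0/2)^d)$ points, and chaining through overlapping balls bounds the diameter by a constant times that count times $i_0$). None of this, however, is relevant to the present paper, which simply invokes the result.
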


Recall that $\psi_0$ is a constant, so it is not used in the construction of the embedding. Based on the above theorem, let $\ell$ be the smallest integer $q$ such that $\Psi_q(\mx)$ is an embedding and we define
 \begin{align}
\Psi(\mx)=(\psi_1(\mx), \cdots,\psi_\ell(\mx)). 
\end{align}
Hence,  we have $d \leq \ell \leq C(d, k, i_0, V)$. In other words, $\ell$ can be bounded above by the dimension, Ricci curvature lower bound, the injectivity radius lower bound and the volume upper bound of the manifold $\mm$. 

Based on Definition \ref{1/rho normalization} and Theorem \ref{spectral convergence of L on closed manifold}, we provide the following definition of the reaction coordinates which we use in this work. 
\begin{defn}[Reaction coordinates] \label{definition of the reaction coordinates}
Let $(\lambda_i, \psi_i)$ be the i-th eigenpair of the Laplace Beltrami operator on $\mm$, $-\Delta$, with $\psi_j$ normalized in $L^2(\mm)$.  Suppose for $x \in \mm$
\begin{align}
\Psi(\mx)=(\psi_1(\mx), \cdots,\psi_\ell(\mx)),  
\end{align}
is a smooth embedding of $\mm$ into $\mathbb{R}^\ell$. Let $A$ be a $\ell \times \ell$ diagonal matrix such that { $A_{jj}=a_j\|v_{j,n,\epsilon}\|_{l^2(1/\hat{\rho^*})}$ where $\|v_{j,n,\epsilon}\|_{l^2(1/\hat{\rho^*})}$ is defined in Definition \ref{1/rho normalization} and $a_j$ is defined in Theorem \ref{spectral convergence of L on closed manifold}.} Under Assumption \ref{assumption DM}, we define 
\begin{align}
\my_i = \Phi(\mx_i) := A \circ \Psi(\mx_i) \in \mathbb{R}^\ell,
\end{align}
to be the reaction coordinates of $\mx_i$
\end{defn}

Note that $ A \circ \Psi $  is also a smooth embedding of $\mm$ into $\mathbb{R}^\ell$. Hence, by Theorem \ref{spectral convergence of L on closed manifold}, we have a justification of the diffusion map. Let $\{v_{1,n,\epsilon}, \cdots, v_{\ell,n,\epsilon}\}$ be the first $\ell$ eigenvectors of $ \frac{I-L_{\epsilon,1}}{\epsilon^2}$ in Step (iv) of the algorithm of the diffusion map. Then,  the diffusion map 
\begin{align}
\mx_i \rightarrow (v_{1,n,\epsilon}(i), \cdots, v_{\ell,n,\epsilon}(i)),
\end{align} 
is an approximation of $\my_i = \Phi(\mx_i) := A \circ \Psi(\mx_i)$ over the data points $\{\mx_1 \cdots, \mx_n\}$.

\

Although the diffusion map is applied to construct the reaction coordinates in this work,  it also can be used to solve the Fokker-Planck equations on $\mm$. Under Assumption \ref{assumption DM},  for $f \in C^3(\mm)$, it is shown in \cite{coifman2006diffusion} that $\frac{I-L_{\epsilon, \alpha}}{\epsilon^2}$ converges pointwisely (in the sense of Theorem \ref{pointwise convergence}) to the Kolmogrov backward operator 
$$\mathcal{L}_{\alpha}f =-\Delta f+2(1-\alpha) \nabla U\cdot \nabla f,$$
where $U=-\log \rho^*$ and $\rho^*$ is the unknown sample density defined in Assumption \ref{assumption DM}.
Hence,  the eigenpairs of  $\frac{I-L_{\epsilon, \alpha}}{\epsilon^2}$  approximate the corresponding eigenpairs of $\mathcal{L}_{\alpha}$.  When $\alpha=1/2$, let $\mathcal{L}=\mathcal{L}_{1/2}$. Let 
$$\mathcal{L}^*f=-\Delta f- \nabla \cdot (f \nabla U)$$
be the Kolmogrov forward (Fokker-Planck) operator.  Then, $\mathcal{L}$ and $\mathcal{L}^*$ share the same eigenvalues and their eigenfunctions differ by a factor $\rho^*$. The solution to the Fokker-Planck equation 
$$\partial \rho_t= -\mathcal{L}^* \rho_t$$
can be expressed as a series sum in terms of the eigenpairs of the $\mathcal{L}^*$. The coefficients in the series are determined by the projection of the initial condition onto each eigenspace.  Suppose the unknown sample density $ \rho^*$ is approximated through a density estimation.  Then, the eigenpairs of $\mathcal{L}^*$ over the sample points can be approximated by using the eigenpairs $\frac{I-L_{\epsilon, 1/2}}{\epsilon^2}$. In \cite{berry2015nonparametric1,berry2015nonparametric2}, the authors construct an approximation of the solution to the Fokker-Planck equation by using the spectral method and they explore the dynamics on the manifold.  The solution is constructed by projecting the discretization of the initial condition onto the approximation of the finite dimensional eigenspaces of $\mathcal{L}^*$. 
It is worth mentioning that the setup and methods applied in our work are different from \cite{coifman2006diffusion,berry2015nonparametric1,berry2015nonparametric2}.  First, we will use the diffusion map to find the reaction coordinates and reconstruct a manifold $\nn$ in a low dimensional space. As we describe in the introduction, instead of solving the Fokker-Planck equation on $\mm$ in the high dimensional space, we will solve the Fokker-Planck equation on $\nn$. Second, we assume that the equilibrium potential $U_{\nn}$ in the Fokker-Planck equation on $\nn$  is equal to $-\log \rho^{\nn}_{\infty}$, where $\rho^{\nn}_{\infty}$ is a known equilibrium density. However, it is not necessary that $\rho^{\nn}_{\infty}$ is equal to the sample density on $\nn$.  At last, we will propose a finite volume scheme rather than apply the spectral method to solve the Fokker-Planck equation .

}

\section{Solution to the Fokker-Planck equation on $\nn$ }\label{sec5}
Suppose $\nn$ is a $d$ dimensional smooth closed Riemannian submanifold of  $\mathbb{R}^\ell$ with the coordinates $\my$ obtained in Section \ref{sec2}.  In this section, given an  equilibrium potential  $U_{\nn}(\my)$,  we will focus on designing a data-driven solver for   the Fokker-Planck equation on $\nn$ which drives any initial data $\rho_0$ to   the equilibrium density on $\nn$,  $\rho^{\nn}_\8(\my)\propto e^{-U_{\nn}(\my)}$ (after taking $kT=1$). To study the trajectory of $\rho_t$ driving any initial data $\rho_0$ to the equilibrium density $\rho^{\nn}_\8(\my)$, it is sufficient to solve the following Fokker-Planck equation on manifold $\nn$
\begin{equation}\label{FP_N1}
\pt_t \rhon_t = \divn (\nabla_\nn \rhon_t + \rhon_t \nabla_\nn U_{\nn}).
\end{equation} 
For {notational} simplicity, in the {remainder} of this section, we will denote the equilibrium density for the  Fokker-Planck equation \eqref{FP-N} as $\pi(\my):= \rho_\8^\nn(\my)$.

{ As mentioned before, since we do not have exact information of $\nn$, the only implementable method is to use the  data $\{\my_i\}_{i=1}^n\subset\nn$  to construct directly a good discrete approximation to the Fokker-Planck equation \eqref{FP_N1}. We know that $\my_i = \Phi(\mx_i)$, where $\Phi$ is the reaction coordinates defined in Definition \ref{definition of the reaction coordinates} and $\{\mx_i\}_{i=1}^n$ are the samples on $\mm$ based on the density function $\rho^*$ in Assumption \ref{assumption DM}. Hence,  $\{\my_i\}_{i=1}^n$ are the samples on $\nn$ based on a density function $\rho^{**}$, where $\rho^{**}$ is the induced density function of $\rho^*$ by the reaction coordinates $\Phi$. Since $\rho^*$ has  an upper bound and a positive lower bound, $\rho^{**}$ also has an upper bound and a postive lower bound.  It can be proved that  $\{\my_i\}_{i=1}^n$ are well-distributed on $\nn$ when they are sampled based on a density function with such bounds \cite{Dejan15, liu2019rate}.}

In Section \ref{sec_5.1}, we will construct a Voronoi tessellation for $\nn$ from $\{\my_i\}_{i=1}^n\subset\nn$ and then assign the transition probability for an approximated Markov process on $\{\my_i\}_{i=1}^n$ between the nearest neighbor points. This transition probability with detailed balance property also gives {a}  finite volume  scheme for { solving the Fokker-Planck} equation \eqref{FP_N1}. We give the  stability and convergence analysis for this scheme in Section \ref{sec_5.2}. However, without the exact metric on $\nn$, to propose an implementable scheme, the Voronoi tessellation needs to be further approximated. In Section  \ref{sec5.3},     thanks to the metric on $\nn$  induced by the low dimensional Euclidean distance in $\bR^{\ell}$,  the volumes of the Voronoi cells and the areas of the  Voronoi faces    can be  further approximated by polygons in its tangent plane in $\bR^\ell$ with high order accuracy. Therefore the  new transition probability based on polygons  can be assigned and leads to an implementable  finite volume  scheme for Fokker-Planck equation \eqref{FP_N1}; see Theorem \ref{mainthm_mp}.  In Section \ref{sec_explicit}, we provide an unconditionally stable explicit time discretization for the finite volume  scheme  based on the  detailed balance property of the Markov process, which satisfies {a mass conservation law and  exponentially converges to the equilibrium. As a consequence, we obtained an approximated Markov process  on $\{\my_i\}_{i=1}^n$, i.e., random walk between the nearest neighbor points with  an approximated transition probability and  jump rate that enjoys { good properties such as the conservation of mass, $L^1$ contraction for the forward equation, $L^\8$ maximal principle for the backward equation and the $L^2$ error estimates.}}

\subsection{Construction of the Voronoi tessellation and the finite volume  scheme  on manifold $\nn$}\label{sec_5.1}
In this section, we construct a finite volume scheme based on the Voronoi tessellation for manifold $\nn$. We will see the advantage is that the  Voronoi tessellation automatically gives a {positivity-preserving} finite volume  scheme for the Markov process with detailed balance; see Lemma \ref{lem_Markov}.

{Suppose $(\nn, d_\nn)$ is a $d$ dimensional smooth closed submanifold of $\mathbb{R}^\ell$ and $d_{\nn}$ is  induced by the Euclidean metric in $\bR^\ell$. Let $S \subset \nn$. We have
\begin{align}
\hs^k_{\delta}(S)= \inf \{\sum_{i=1}^\infty \diam(U_i)^k,  S \subset \cup_{i=1}^\infty U_i, \diam(U_i) < \delta \},
\end{align}
where the infimum is taken over all countable covers of $S$ in $\nn$ and the diameter of the set $U_i$ is measured in metric $d_\nn$.
Then,  the $k$ dimensional Hausdorff measure $\hs^k(S)$ of  $S$ in $\nn$ is defined as 
\begin{align}
\hs^k(S)= \lim_{\delta \rightarrow 0} \hs^k_{\delta}(S).
\end{align}
}
{For the samples $\{\my_i\}_{i=1}^n \subset \nn$, we define the Voronoi cell as}
\begin{equation}
C_i:= \{\my\in \nn ; \ud_\nn(\my,\my_i)\leq \ud_\nn(\my,\my_j) \text{ for all }\my_j,\,\, j=1, \cdots, n\},
\end{equation} 
with the volume $|C_i|=\hs^d(C_i)$.
Then $\nn=\cup_{i=1}^n  C_i$ is a Voronoi tessellation of manifold $\nn$. Denote the Voronoi face for cell $C_i$ as 
\begin{equation}
\Gamma_{ij}:= C_i\cap C_j,  \text{ and its area as } |\Gamma_{ij}|=\hs^{d-1}(\Gamma_{ij})
\end{equation} 
for any $j=1, \cdots, n$. If $\Gamma_{ij}= \emptyset$ or {$i = j$} then we set $|\Gamma_{ij}|=0$. We define the bisector between $\my_i$ and $\my_j$ to be the set 
\begin{equation}
G_{ij}:= \{\my \in \nn ; \ud_\nn(\my,\my_i) = \ud_\nn(\my,\my_j) \}.
\end{equation} 
Obviously, we have $\Gamma_{ij} \subset G_{ij}$.

Define the associated adjacent sample points as 
\begin{equation}
\text{VF}(i):=\{j; ~\Gamma_{ij}\neq \emptyset\}.
\end{equation}

{First, we have the following basic facts about the Voronoi cells on a manifold.}
\begin{prop}
If $C_i$ is the Voronoi cell containing the $\my_i$ and $C_i$ is contained in the geodesic ball centered at $\my_i$ whose radius is equal to the injectivity radius of $\nn$ at $\my_i$, then $C_i$ is star shaped.
\end{prop}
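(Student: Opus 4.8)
The plan is to show that for every $\my \in C_i$, the minimizing geodesic from $\my_i$ to $\my$ lies entirely inside $C_i$, which is exactly the definition of $C_i$ being star-shaped with respect to $\my_i$. Let $B := B_\nn(\my_i, \mathrm{inj}_\nn(\my_i))$ denote the geodesic ball of radius equal to the injectivity radius of $\nn$ at $\my_i$, so by hypothesis $C_i \subset B$. First I would fix $\my \in C_i$ and let $\gamma : [0,1] \to \nn$ be the unique minimizing geodesic from $\my_i = \gamma(0)$ to $\my = \gamma(1)$; uniqueness and the fact that $\gamma$ stays in $B$ follow because $\my$ lies within the injectivity radius of $\my_i$, so the exponential map $\exp_{\my_i}$ is a diffeomorphism onto $B$ and $\gamma(t) = \exp_{\my_i}(t\, v)$ for the appropriate $v \in T_{\my_i}\nn$. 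The point is then that $d_\nn(\my_i, \gamma(t)) = t\, d_\nn(\my_i, \my)$ for all $t \in [0,1]$.

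The core step is the inequality: for any other sample point $\my_j$ and any $t \in [0,1]$, we must check $d_\nn(\my_i, \gamma(t)) \le d_\nn(\my_j, \gamma(t))$. Since $\my \in C_i$ we have $d_\nn(\my_i, \my) \le d_\nn(\my_j, \my)$. Using the triangle inequality in $(\nn, d_\nn)$,
\begin{align}
d_\nn(\my_j, \gamma(t)) &\ge d_\nn(\my_j, \my) - d_\nn(\my, \gamma(t)) \ge d_\nn(\my_i, \my) - d_\nn(\my, \gamma(t)).
\end{align}
Now $d_\nn(\my, \gamma(t))$ is the length of the sub-geodesic of $\gamma$ from $\gamma(t)$ to $\gamma(1)$, which is $(1-t)\, d_\nn(\my_i, \my)$, while $d_\nn(\my_i, \gamma(t)) = t\, d_\nn(\my_i, \my)$. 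Substituting,
\begin{align}
d_\nn(\my_j, \gamma(t)) \ge d_\nn(\my_i,\my) - (1-t) d_\nn(\my_i, \my) = t\, d_\nn(\my_i, \my) = d_\nn(\my_i, \gamma(t)),
\end{align}
so $\gamma(t) \in C_i$. Since $\my_j$ was arbitrary, $\gamma([0,1]) \subset C_i$, and since $\my$ was arbitrary, $C_i$ is star-shaped with respect to $\my_i$.

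The one point that needs care — and which I expect to be the main (if modest) obstacle — is justifying that $\gamma$ exists, is unique, and that the additivity of arc length along $\gamma$ gives $d_\nn(\my_i, \gamma(t)) = t\, d_\nn(\my_i,\my)$ and $d_\nn(\gamma(t), \my) = (1-t)\, d_\nn(\my_i, \my)$ as genuine distances in $\nn$ (not merely path lengths). This is where the injectivity radius hypothesis is used: because $C_i \subset B$ and $B$ is a normal ball, the radial geodesic is the unique minimizer from $\my_i$ to any point of $B$, and any sub-arc of a minimizing geodesic is itself minimizing, so the length equalities are equalities of distances. I would state this as a consequence of standard Riemannian geometry (e.g. the properties of the exponential map on a normal neighborhood, cf. do Carmo or Lee), and then the triangle-inequality computation above closes the argument. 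Note the hypothesis that $C_i$ lies in the injectivity-radius ball is essential: without it there could be a point $\my \in C_i$ joined to $\my_i$ by two distinct minimizing geodesics, or the distance-additivity could fail, and star-shapedness need not hold.
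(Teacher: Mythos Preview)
Your proof is correct and takes essentially the same approach as the paper: both use the triangle inequality together with additivity of distance along the minimizing geodesic from $\my_i$ to $\my$ (guaranteed by the injectivity-radius hypothesis). The paper phrases it as a short contradiction---assuming some $\my'=\gamma(t)\notin C_i$ lies closer to some $\my_j$, then $d_\nn(\my,\my_j)\le d_\nn(\my,\my')+d_\nn(\my',\my_j)<d_\nn(\my,\my')+d_\nn(\my',\my_i)=d_\nn(\my,\my_i)$---while you unwind the same inequality directly; the content is identical.
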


\begin{proof}
For any $\my \in C_i$, if there is a point $\my'$ on the minimizing geodesic between $\my$ and $\my_i$ such that $\my'  \not\in C_i$ and $\my' \in C_j$, then $d_\nn(\my', \my_j) < d_\nn(\my', \my_i)$. Therefore, we have $d_\nn(\my,\my_j) \leq d_\nn(\my,\my')+d_\nn(\my',\my_j)< d_\nn(\my,\my')+d_\nn(\my',\my_i) =d_\nn(\my,\my_i)$. This contradicts to $\my \in C_i$. Hence, any point  on the minimizing geodesic between $\my$ and $\my_i$ is in $C_i$.
\end{proof}

{Note that the above fact holds regardless how $\{\my_i\}_{i=1}^n$ are sampled. 

\

Next, we want to discuss the geometric properties of the Voronoi faces. We start from the bisector between two points. A natural question is whether a bisector between two points on a closed $d$ dimensional manifold is a $d-1$ dimensional submanifold. Unfortunately, the answer is negative for two arbitrary points due to the topological and geometrical structure of the manifold.  In Figure \ref{bisectorexample}, we show an example that on  a manifold diffeomorphic to a torus, the bisector between two points $A$ and $B$ is a figure ``8'' curve.  
\begin{figure}[h!]
\centering
\includegraphics[width=0.3\columnwidth]{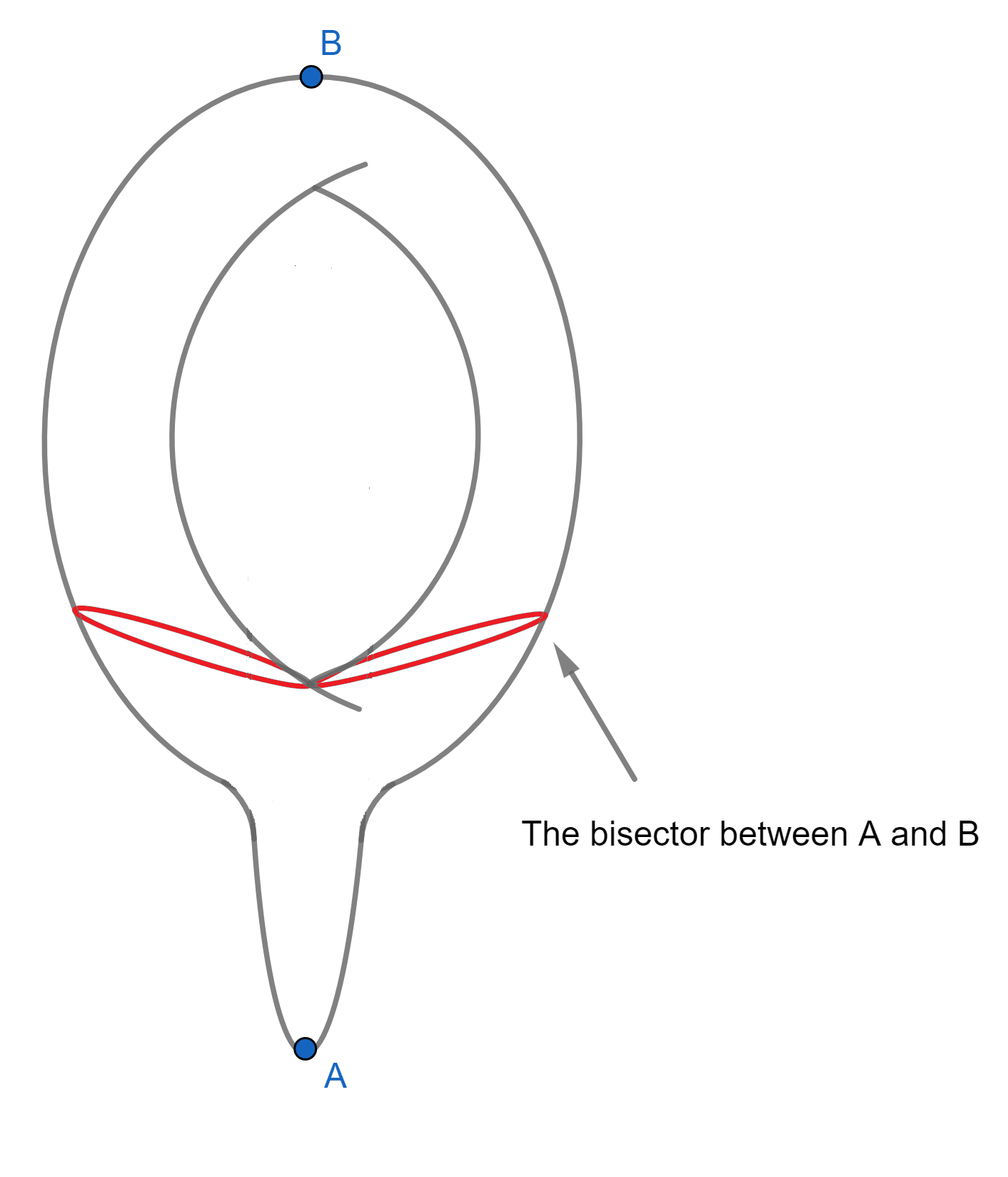}
\caption{An example in which the bisector between two points is not a submanifold. }\label{bisectorexample}
\end{figure}
There are special cases when the bisector between any two points is a submanifold globally. In \cite{beem1975pseudo}, the author proves that any bisector between two points is a totally geodesic submanifold if and only if the manifold has constant curvature. An obvious example of this case is  the round sphere, where any bisector is a great hypersphere. Hence, on the round sphere,  any Voronoi surface is a part of a great hypersphere. 

In this work, we prove the following local regularity result for the bisectors on any manifold. In fact, we show that when two points are close enough, then an open neighborhood on the bisector around the midpoint of the minimizing geodesic connecting those two points is a $d-1$ dimensional submanifold.   The proof of the proposition with a figure to illustrate the statement of the proposition is in Appendix \ref {proof local regularity of VFace}.

\begin{prop}\label{perpendicular to Voronoi face}
Suppose $\delta$ is small enough depending on the bounds of the sectional curvatures and the injectivity radius of $\nn$. For any $\my_i \in \nn$, let $B_{\delta}(\my_i)$ be an open geodesic ball of radius $\delta$ at $\my_i$. Suppose $\my_j \in B_{\delta}(\my_i)$ and $G_{ij}$ is the bisector between $\my_i$ and $\my_j$. Then, $M_{ij}=B_{\delta}(\my_i) \cap G_{ij}$ is a $d-1$ dimensional submanifold. Let $\my^*$ be the midpoint of the minimizing geodesic between $\my_i$ and $\my_j$. Then $\my^* \in M_{ij}$ and the minimizing geodesic between $\my_i$ and $\my_j$ is perpendicular to $M_{ij}$ at $\my^*$.
\end{prop}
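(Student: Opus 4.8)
The plan is to exhibit $M_{ij}$ as a regular level set of a single smooth function and to read off both the submanifold property and the perpendicularity statement from the gradient of that function. First I would fix $\delta$ small enough --- smaller than, say, one third of the injectivity radius of $\nn$, and if necessary below the convexity radius, which is controlled by the injectivity radius together with the sectional curvature bounds --- so that for every $\my\in B_\delta(\my_i)$ the distances $\ud_\nn(\my,\my_i)$ and $\ud_\nn(\my,\my_j)$ (the latter being $\le 2\delta$) stay strictly below the injectivity radius. Under this choice the squared-distance functions $q_i(\my):=\ud_\nn(\my,\my_i)^2$ and $q_j(\my):=\ud_\nn(\my,\my_j)^2$ are smooth on $B_\delta(\my_i)$, with Riemannian gradients $\operatorname{grad}q_i(\my)=-2\exp_{\my}^{-1}(\my_i)$ and $\operatorname{grad}q_j(\my)=-2\exp_{\my}^{-1}(\my_j)$. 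Setting $F:=q_i-q_j$ on $B_\delta(\my_i)$, one has $M_{ij}=B_\delta(\my_i)\cap G_{ij}=F^{-1}(0)$.

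Next I would check that $0$ is a regular value of $F$. For $\my\in M_{ij}$ we have $\ud_\nn(\my,\my_i)=\ud_\nn(\my,\my_j)<\delta$, so both $\my_i$ and $\my_j$ lie in the injectivity domain of $\exp_{\my}$, on which $\exp_{\my}$ is a diffeomorphism; note also $\my\neq\my_i,\my_j$ there. If $\operatorname{grad}F(\my)=0$, the gradient formulas force $\exp_{\my}^{-1}(\my_i)=\exp_{\my}^{-1}(\my_j)$, hence $\my_i=\my_j$ by injectivity, a contradiction. Thus $\operatorname{grad}F$ vanishes nowhere on $M_{ij}$, and the regular value theorem shows $M_{ij}$ is an embedded $(d-1)$-dimensional submanifold of $B_\delta(\my_i)$, with $T_{\my}M_{ij}=(\operatorname{grad}F(\my))^{\perp}$ at each point.

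For the remaining two claims, observe that since $\my_j\in B_\delta(\my_i)$ with $\delta$ below the injectivity radius, the minimizing geodesic $\gamma:[0,1]\to\nn$ from $\my_i$ to $\my_j$ is unique, and its midpoint $\my^*=\gamma(1/2)$ satisfies $\ud_\nn(\my^*,\my_i)=\ud_\nn(\my^*,\my_j)=\tfrac12\ud_\nn(\my_i,\my_j)<\delta$, so $\my^*\in B_\delta(\my_i)\cap G_{ij}=M_{ij}$. Because a sub-arc of a minimizing geodesic is itself minimizing, the two halves of $\gamma$ realize the minimizing geodesics from $\my^*$ to $\my_i$ and to $\my_j$, so $\exp_{\my^*}^{-1}(\my_i)=-\tfrac12\dot\gamma(\tfrac12)$ and $\exp_{\my^*}^{-1}(\my_j)=\tfrac12\dot\gamma(\tfrac12)$. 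Substituting into the gradient formula gives
\[
\operatorname{grad}F(\my^*)=-2\exp_{\my^*}^{-1}(\my_i)+2\exp_{\my^*}^{-1}(\my_j)=2\,\dot\gamma(\tfrac12),
\]
so the velocity of $\gamma$ at $\my^*$ is a nonzero multiple of $\operatorname{grad}F(\my^*)$ and hence orthogonal to $T_{\my^*}M_{ij}=(\operatorname{grad}F(\my^*))^{\perp}$, which is precisely the perpendicularity assertion.

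I expect the only genuinely delicate point to be the bookkeeping in the first step: one must choose $\delta$ so that \emph{every} distance appearing --- $\ud_\nn(\my,\my_i)$ and $\ud_\nn(\my,\my_j)$ for $\my$ ranging over $B_\delta(\my_i)$, as well as the half-length $\tfrac12\ud_\nn(\my_i,\my_j)$ --- stays below the injectivity radius (and, for safety, within a geodesically convex ball, which is where the sectional curvature bounds enter), guaranteeing smoothness of the squared-distance functions and unambiguous inverse exponentials. Once this is arranged, the last two steps are formally identical to the Euclidean model $\nn=\mathbb{R}^d$, where $F(\my)=|\my-\my_i|^2-|\my-\my_j|^2$ is affine and $\operatorname{grad}F=2(\my_j-\my_i)$ is exactly twice the velocity of the segment from $\my_i$ to $\my_j$.
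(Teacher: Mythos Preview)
Your proof is correct, and it takes a genuinely different route from the paper's. The paper works in geodesic normal coordinates centered at $\my_i$: writing $\my_j=\exp_{\my_i}(w)$, it expands $d_\nn^2(\exp_{\my_i}(v),\exp_{\my_i}(w))=|v-w|^2+f(v,w)$ with $f$ given to leading order by the curvature tensor, and then applies the one-variable Implicit Function Theorem to $F(t,x)=|w|^2-2(1+t)x\cdot w+f_w((1+t)x)$ to exhibit the bisector as a graph over the perpendicular bisecting hyperplane $H\subset T_{\my_i}\nn$. The nonvanishing of $\partial_t F$ is obtained from an explicit curvature-dependent estimate, which is where the sectional curvature bound enters quantitatively in their argument. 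Perpendicularity at $\my^*$ is then deduced separately, by observing that the geodesic sphere about $\my_i$ through $\my^*$ is tangent to $M_{ij}$ and invoking Gauss's Lemma.

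Your approach via $F=d_\nn(\cdot,\my_i)^2-d_\nn(\cdot,\my_j)^2$ and the standard identity $\operatorname{grad}\,d_\nn(\cdot,p)^2=-2\exp_{(\cdot)}^{-1}(p)$ is cleaner: the regular value argument dispenses with the curvature expansion entirely, and the perpendicularity falls out of the same gradient computation rather than a separate Gauss's Lemma step. The price is only that the dependence of $\delta$ on the curvature is implicit (through the convexity/injectivity radius) rather than explicit. The paper's approach, on the other hand, produces an explicit chart of $M_{ij}$ as a graph over a flat hyperplane in $T_{\my_i}\nn$, which is closer in spirit to the tangent-plane approximations used later when estimating the areas of Voronoi faces; but nothing in the statement itself requires that extra structure.
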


Since $\{\my_i\}_{i=1}^n$ are sampled based on a density function $\rho^{**}$ with a positive lower bound, when $n$ is large enough, with high probability, there are enough points in a small geodesic ball. Hence, we can assume that each Voronoi cell is small enough so that it is contained in a geodesic ball. We propose the following assumption.

\begin{assumption}\label{assumption on voronoi face}
For $\delta$ defined in Proposition \ref{perpendicular to Voronoi face}, let $B_{\frac{\delta}{2}}(\my_i)$ be an open geodesic ball centered at $y_i$ with radius $\frac{\delta}{2}$. We assume that when $n$ is large enough, we have $C_i \subset B_{\frac{\delta}{2}}(\my_i)$ for $i=1, \cdots , n$.
\end{assumption}

Suppose $\Gamma_{ij}$ is the Voronoi face between $\my_i$ and $\my_j$,  Assumption \ref{assumption on voronoi face} implies that $\my_j \in B_{\delta}(y_i)$. Based on Assumption \ref{assumption on voronoi face} and Proposition \ref{perpendicular to Voronoi face}, the interior of the Voronoi face $\Gamma_{ij}$ is an open subset of a submanifold $M_{ij}$. Hence, there is a well defined unit outward normal vector field $ \mathbf n$ on each $\partial C_i$  and we can apply the divergence Theorem on each Voronoi cell. 
}

\

{ 
Recall the {Fokker-Planck equation} on $\nn$ \eqref{FP_N1}.
We first recast \eqref{FP_N1} in the relative entropy form
\begin{equation}
\pt_t \rhon_t = \divn \left(\pi  \nabla_\nn \left(\frac{\rhon_t}{\pi} \right) \right).
\end{equation}
We drop the dependence $t, \nn$ in the short hand notation $\rho=\rho_t^\nn$. We integrate this on $C_i$ and use the divergence theorem on cell $C_i$ to obtain
\begin{equation}\label{tt310}
\frac{\ud }{\ud t} \int_{C_i} \rho \hs^d(C_i) = \sum_{j\in \text{VF}(i) } \int_{\Gamma_{ij}} \pi \mathbf n \cdot \nabla_\nn\left(\frac{\rho}{\pi} \right)  \hs^{d-1}(\Gamma_{ij}),
\end{equation}
where $\mathbf{n}$ is the unit outward normal vector field on $\partial C_i$.

To design the numerical algorithm, first, let us clarify the probability on each cell.   Then the 
probability in $C_i$ can be approximated as
\begin{equation}\label{alg11}
\int_{C_i} \rho(\my)  \hs^d(C_i) \approx \rho(\my_i) (1+\diam (C_i)) |C_i|.
\end{equation}
Second, we use $\rho_i$ to approximate the exact solution $\rho(\my_i)$ on each cell $C_i$. Let $\pi_i$ be the approximated equilibrium density at $\my_i$ satisfying $\sum_{i=1}^n \pi_i|C_i|=1$. Notice $ \rho_\8(\my)\propto e^{-{U_{\nn}(\my)}}$, so  $\pi_i>0$ for all $i$.
Then the surface gradient in \eqref{tt310} can be approximated by
\begin{equation}\label{alg22}
 \sum_{j\in \text{VF}(i) } \int_{\Gamma_{ij}} \pi \mathbf n \cdot \nabla_\nn\left(\frac{\rho}{\pi} \right)  \hs^{d-1}(\Gamma_{ij}) \approx \frac12 \sum_{j\in \text{VF}(i)} \frac{\pi_i+ \pi_j}{|y_i-y_j|} |\Gamma_{ij}|\left( \frac{\rho_j}{\pi_j}- \frac{\rho_i}{\pi_i} \right).
\end{equation}

Therefore, combining \eqref{alg11} and \eqref{alg22}, we give the following finite volume scheme. 
For $i=1, \cdots, n$,
\begin{equation}\label{mp}
\frac{\ud}{\ud t}\rho_i |C_i|= \frac12 \sum_{j\in \text{VF}(i)} \frac{\pi_i+ \pi_j}{|y_i-y_j|} |\Gamma_{ij}|\left( \frac{\rho_j}{\pi_j}- \frac{\rho_i}{\pi_i} \right).
\end{equation}

Let $\chi_{\scriptscriptstyle{C_i}}$ be the characteristic function such that $\chi_{\scriptscriptstyle{C_i}}=1$ for $\my\in C_i$ and $0$ otherwise.
Then
$$\rho^{\text{approx}}(\my):=\sum_{i=1}^n \rho_i \chi_{\scriptscriptstyle{C_i}}(\my)$$
 is the piecewise constant probability distribution on $\nn$ provided $\sum_{i=1}^n \rho_i|C_i|=1$ and $\rho_i\geq 0$.  We will prove later in the convergence analysis Theorem \ref{mainthm_conv} that the exact solution $\rho$ can be approximated by the numerical piecewise constant probability distribution constructed from $\rho_i, i=1, \cdots, n$.
}

We will first formulate finite volume  scheme \eqref{mp} as the forward equation for a Markov process with basic properties such as ergodicity in Section \ref{sec_mp}. We then show the truncation error analysis and stability analysis and thus convergence of the scheme \eqref{mp} later in Section \ref{sec_5.2}.

\subsection{Associated Markov process, detailed balance and ergodicity}\label{sec_mp}
We will first formulate finite volume  scheme \eqref{mp} as the forward equation for a Markov process and then  in Proposition \ref{decay}, we study the generator of the Markov process, which leads to ergodicity of  $\frac{\rho_i(t)}{\pi_i}$. Roughly speaking, the forward equation leads to the conservation law while the backward equation leads to maximum norm estimates for $\frac{\rho_i}{\pi_i}$.

\begin{lem}\label{lem_Markov}
{ Let $\pi_i>0$ for all $i=1, \cdots, n$.} The finite volume  scheme \eqref{mp} is the forward equation for a Markov Process with transition probability $P_{ji}$ (from state $j$ to $i$) and jump rate $\lmd_j$
{\begin{equation}\label{mp1}
\frac{\ud}{\ud t}\rho_i |C_i| = \Big( \sum_{j\in \text{VF}(i)} \lmd_j P_{ji} \rho_j |C_j| \Big)- \lmd_i \rho_i |C_i|,
\end{equation}}
where 
{\begin{equation}\label{def59}
\begin{aligned}
\lmd_i = \sum_{j\neq i} Q_{ij}=: \frac1{2|C_i|\pi_i}\sum_{j\in \text{VF}(i)} \frac{\pi_i+ \pi_j}{|y_i-y_j|}|\Gamma_{ij}|~{ >0}, \quad i=1, 2, \cdots, n; \\
 P_{ij}:=\left\{ \begin{array}{cc}
 \displaystyle \frac{Q_{ij}}{\lmd_i}=\frac{1}{\lmd_i}\frac{\pi_i+ \pi_j}{2\pi_i |C_i|}\frac{|\Gamma_{ij}|}{|y_i-y_j|}, & j\in \text{VF}(i);\\
 ~
 \\
  \displaystyle 0, & j\notin \text{VF}(i).
 \end{array}
 \right.
 \end{aligned}
\end{equation}
}
\begin{enumerate}[(i)]
\item
$P$ is transition probability matrix satisfying $\sum_j P_{ij}=1$ and the detailed balance property
\begin{equation}\label{db}
 P_{ji} \lmd_j\pi_j |C_j| =  P_{ij} \lmd_i\pi_i |C_i| = \frac{\pi_i+ \pi_j}{2}\frac{|\Gamma_{ij}|}{|y_i-y_j|} \quad \forall i,\, j.
\end{equation}
\item With $\{w_i\}_{i=1}^n:=\{\rho_i |C_i|\}_{i=1}^n$, we recast the forward equation  \eqref{mp1} as
\begin{equation}
\frac{\ud}{\ud t} w = Q^* w,
\end{equation}
{ where $Q^*$ is the transpose of $Q$-matrix defined as}
\begin{equation}\label{AA}
Q=(a_{ij})_{n\times n}, \quad a_{ij}:= \left\{\begin{array}{cc}
-\lmd_i , \quad & j=i;\\
\lmd_i  P_{ij}, \quad & j\neq i.
\end{array}\right.
\end{equation}
\item $\sum_{i=j}^n a_{ij}=0$, which gives the conservation law for $\sum_i w_i$
\begin{equation}
\frac{\ud}{\ud t} \sum_{i=1}^n w_i =\sum_{i=1}^n \sum_{j=1}^n a_{ji}w_j= 0;
\end{equation}
\item We have the dissipation relation { for $\chi^2$-divergence}
 \begin{align}\label{dissi}
\frac{\ud}{\ud t} \sum_i \frac{\rho^2_i}{\pi_i} |C_i|  
=-\sum_{i,j} \frac{\pi_i+ \pi_j}{2}\frac{|\Gamma_{ij}|}{|y_i-y_j|} \left(\frac{\rho_j}{\pi_j} -\frac{\rho_i}{\pi_i} \right)^2. 
\end{align}
\end{enumerate}
\end{lem}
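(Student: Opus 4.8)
The plan is to establish the reformulation \eqref{mp1} together with items (i)--(iv) by direct algebra; the only structural facts used are the symmetry of the Voronoi data, namely $\Gamma_{ij}=\Gamma_{ji}$ (so $|\Gamma_{ij}|=|\Gamma_{ji}|$), $|\my_i-\my_j|=|\my_j-\my_i|$, and $j\in VF(i)\iff i\in VF(j)$, together with $\pi_i>0$. First I would split the right-hand side of \eqref{mp} into a gain term $\tfrac12\sum_{j\in VF(i)}\tfrac{\pi_i+\pi_j}{|\my_i-\my_j|}|\Gamma_{ij}|\tfrac{\rho_j}{\pi_j}$ and a loss term $-\tfrac{\rho_i}{\pi_i}\cdot\tfrac12\sum_{j\in VF(i)}\tfrac{\pi_i+\pi_j}{|\my_i-\my_j|}|\Gamma_{ij}|$. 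The loss term is exactly $-\lambda_i\rho_i|C_i|$ for $\lambda_i$ as in \eqref{def59}, while each summand of the gain term equals $\lambda_jP_{ji}\rho_j|C_j|$ once the definition of $P_{ji}$ is substituted and the factor $\lambda_j$ cancels; this produces \eqref{mp1}.

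For (i), $\sum_iP_{ji}=1$ follows by factoring $\tfrac1{\lambda_j}\cdot\tfrac1{2\pi_j|C_j|}$ out of the sum over $i\in VF(j)$ and recognizing the remaining sum as $2\pi_j|C_j|\lambda_j$ by definition \eqref{def59} (here $i\in VF(j)\iff j\in VF(i)$ is used), while $P_{jj}=0$ and $P_{ji}=0$ for $i\notin VF(j)$ hold by construction. Detailed balance \eqref{db} is then immediate: multiplying out gives $P_{ji}\lambda_j\pi_j|C_j|=\tfrac{\pi_i+\pi_j}{2}\tfrac{|\Gamma_{ij}|}{|\my_i-\my_j|}$, which is manifestly symmetric in $i$ and $j$ and hence also equals $P_{ij}\lambda_i\pi_i|C_i|$. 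For (ii), with $w_i:=\rho_i|C_i|$, equation \eqref{mp1} reads $\tfrac{\ud}{\ud t}w_i=\sum_{j\in VF(i)}\lambda_jP_{ji}w_j-\lambda_iw_i=\sum_ja_{ji}w_j=(Q^*w)_i$, where $a_{ji}=\lambda_jP_{ji}$ for $j\ne i$, $a_{ii}=-\lambda_i$, and $P_{ji}=0$ off $VF(i)$. For (iii), $\sum_ia_{ji}=-\lambda_j+\lambda_j\sum_{i\ne j}P_{ji}=-\lambda_j+\lambda_j=0$ by (i), so $\tfrac{\ud}{\ud t}\sum_iw_i=\sum_jw_j\sum_ia_{ji}=0$.

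The only step with genuine content is (iv). Writing $u_i:=\rho_i/\pi_i$, I would use the identity $\sum_i\tfrac{\rho_i^2}{\pi_i}|C_i|=\sum_i\pi_i|C_i|u_i^2$ and, since $\pi_i|C_i|\tfrac{\ud}{\ud t}u_i=\tfrac{\ud}{\ud t}w_i$, differentiate and insert \eqref{mp} to obtain $\tfrac{\ud}{\ud t}\sum_i\tfrac{\rho_i^2}{\pi_i}|C_i|=\sum_{i,j}c_{ij}\,u_i(u_j-u_i)$ with $c_{ij}:=\tfrac{\pi_i+\pi_j}{|\my_i-\my_j|}|\Gamma_{ij}|$. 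The key step --- and the closest thing to an obstacle --- is the discrete symmetrization: because $c_{ij}=c_{ji}$, relabeling $i\leftrightarrow j$ and averaging the two expressions gives $\sum_{i,j}c_{ij}u_i(u_j-u_i)=-\tfrac12\sum_{i,j}c_{ij}(u_i-u_j)^2$, which is exactly \eqref{dissi}. I would also remark that $\lambda_i>0$, implicitly needed for $P$ to be a genuine stochastic matrix, uses $\pi_i>0$ and $VF(i)\ne\emptyset$. Overall I expect no real difficulty: the whole argument is bookkeeping with the symmetric Voronoi weights plus the single standard summation-by-parts identity underlying (iv).
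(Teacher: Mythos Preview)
Your proposal is correct and follows essentially the same approach as the paper: both verify \eqref{mp1} and items (i)--(iii) by direct algebra from the symmetric Voronoi weights, and for (iv) both multiply \eqref{mp} by $\rho_i/\pi_i$, sum, and apply the standard symmetrization $\sum_{i,j}c_{ij}u_i(u_j-u_i)=-\tfrac12\sum_{i,j}c_{ij}(u_i-u_j)^2$. The only cosmetic difference is that the paper detours through the Markov form \eqref{mp1} and then uses detailed balance to recover \eqref{mp} before computing the dissipation, whereas you work with \eqref{mp} directly; your remark that $\lambda_i>0$ also tacitly needs $VF(i)\neq\emptyset$ is a fair point.
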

\begin{proof}
First,  one can rewrite \eqref{mp} as \eqref{mp1} with $\lmd_i = \frac1{2|C_i|\pi_i}\sum_{j\in \text{VF}(i)} \frac{\pi_i+ \pi_j}{|y_i-y_j|}|\Gamma_{ij}|$ and $P_{ji}\lmd_j=\frac{\pi_i+ \pi_j}{2|y_i-y_j|}\frac{|\Gamma_{ij}|}{\pi_j |C_j|}$. Then since $\frac{\pi_i+ \pi_j}{|y_i-y_j|} |\Gamma_{ij}|$ is symmetric, we have
\begin{equation}
\frac{\ud}{\ud t} (\sum_{i=1}^n  |C_i|\rho_i) = \sum_{i,j}  \frac12 \frac{\pi_i+ \pi_j}{|y_i-y_j|} |\Gamma_{ij}|\left( \frac{\rho_j}{\pi_j}- \frac{\rho_i}{\pi_i} \right) = 0.
\end{equation}

Second we can check
\begin{equation}\label{Lemma sum Pij}
\sum_{i} P_{ji}=\sum_{i\in \text{VF}(j)} P_{ji} = \frac{1}{\lmd_j} \sum_{i\in \text{VF}(j)} \frac{\pi_i+ \pi_j}{2|y_i-y_j|} \frac{|\Gamma_{ij}|}{\pi_j |C_j|}=1.
\end{equation}

Third the detailed balance property comes from the symmetric property of $\frac{\pi_i+ \pi_j}{|y_i-y_j|} |\Gamma_{ij}|$ and
\begin{equation}
\lmd_j P_{ji}\pi_j |C_j| =\frac{\pi_i+ \pi_j}{2|y_i-y_j|} |\Gamma_{ij}|= \lmd_i P_{ij} \pi_i |C_i|.
\end{equation}

Next, the conservation law follows directly from  $\sum_{i=1}^na_{ji}=0$ {by \eqref{AA} and \eqref{Lemma sum Pij}}.
{ Denote the   diagonal rate matrix as ${R}=\text{diag}( {\lmd}_j)$, then we obtain  $Q$-matrix $ {Q} =  {R}( {P}-I)$.}

Finally, by detailed balance property \eqref{db} and $\sum_j P_{ij}=1$, we recast \eqref{mp1} as
 \begin{equation}\label{323}
\begin{aligned}
\frac{\ud}{\ud t}\rho_i |C_i| &=\sum_{j\in \text{VF}(i)} \lmd_i P_{ij} \pi_i |C_i| \frac{\rho_j}{\pi_j} - \lmd_i \pi_i |C_i| \frac{\rho_i}{\pi_i}\\
& =  \sum_{j\in \text{VF}(i)} \lmd_i \pi_i |C_i|  P_{ij} \left(\frac{\rho_j}{\pi_j} -\frac{\rho_i}{\pi_i} \right) = \sum_{j\in \text{VF}(i)} \frac{\pi_i+ \pi_j}{2|y_i-y_j|} |\Gamma_{ij}| \left(\frac{\rho_j}{\pi_j} -\frac{\rho_i}{\pi_i} \right)
\end{aligned}
\end{equation}
Multiplying this by {$\frac{\rho_i}{\pi_i}$} and  show that
\begin{align}
\frac{\ud}{\ud t} \sum_i \frac{\rho^2_i}{\pi_i} |C_i|  
=-\sum_{i,j} \frac{\pi_i+ \pi_j}{2}\frac{|\Gamma_{ij}|}{|y_i-y_j|} \left(\frac{\rho_j}{\pi_j} -\frac{\rho_i}{\pi_i} \right)^2. 
\end{align}
This gives the dissipation relation \eqref{dissi}.

\end{proof}

\begin{prop}\label{decay}
{ Let $\pi_i>0$ for all $i=1, \cdots, n$.}  { Let $Q:=(a_{ij})_{n\times n}$ be the $Q$-matrix defined in \eqref{AA}.}
Then $\{u_i\}_{i=1}^n:=\{\frac{\rho_i}{\pi_i}\}_{i=1}^n$ is the solution to the backward equation
\begin{equation}
\frac{\ud}{\ud t} u = Q u.
\end{equation}
Moreover, let $\lmd_i$ be the jump rate defined in \eqref{def59}. We conclude $0$ is the simple, principle eigenvalue of $Q$ with the ground state $\{1,1,\cdots, 1\} $. We thus have the exponential decay of $\rho_i(t)$ with respect to time $t$,
\begin{equation}
\max_i \frac{|\rho_i(t)-\pi_i|}{\pi_i}\leq c e^{-(\lambda_1-|\lambda_2|)t},
\end{equation}
{ where $\lambda_1>\max_i \lmd_i$ is the principle eigenvalue of $\lambda I + Q$, $|\lambda_2|<\lambda_1$ is the second largest eigenvalue of $\lambda_1I +Q$, and $\lambda_1-|\lambda_2|>0$ is the spectral gap  of $\lambda_1 I +Q$.}
\end{prop}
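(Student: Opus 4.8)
The plan is to obtain the backward equation by a change of variables in \eqref{323}, then use the detailed balance relation \eqref{db} of Lemma \ref{lem_Markov} to view $Q$ as a self-adjoint, nonpositive operator on a weighted $\ell^2$-space, and finally combine the Perron--Frobenius theorem with the spectral decomposition of $Q$ to read off the decay rate.

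First I would derive $\frac{\ud}{\ud t}u=Qu$. Set $m_i:=\pi_i|C_i|>0$, so that $\sum_i m_i=1$ and $\rho_i|C_i|=m_i u_i$ with $u_i=\rho_i/\pi_i$. Rewriting \eqref{323} as $\frac{\ud}{\ud t}(m_i u_i)=\sum_{j\in VF(i)}\lambda_i m_i P_{ij}(u_j-u_i)$ and using $\sum_j P_{ij}=1$ together with \eqref{AA}, the right-hand side is exactly $m_i(Qu)_i$; since $m_i$ does not depend on $t$, this gives $\frac{\ud}{\ud t}u=Qu$. The definition \eqref{AA} also gives $Q\mathbf 1=0$, since $\sum_j a_{ij}=-\lambda_i+\lambda_i\sum_{j\neq i}P_{ij}=0$.

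Next I would equip $\bR^n$ with the inner product $\langle u,v\rangle_m:=\sum_i m_i u_i v_i$. The detailed balance identity \eqref{db}, rewritten as $m_i a_{ij}=m_j a_{ji}$ for all $i,j$, says precisely that $Q$ is self-adjoint on $(\bR^n,\langle\cdot,\cdot\rangle_m)$, so $\sigma(Q)\subset\bR$; and the same computation that yields \eqref{dissi} gives the Dirichlet-form identity $\langle Qu,u\rangle_m=-\tfrac14\sum_{i,j}\tfrac{\pi_i+\pi_j}{|y_i-y_j|}|\Gamma_{ij}|(u_j-u_i)^2\le 0$, so every eigenvalue of $Q$ is $\le 0$ and $0$ is attained by $\mathbf 1$. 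To see that $0$ is simple and principal, I would fix $\mu_1>\max_i\lambda_i$ large enough that all eigenvalues of $B:=\mu_1 I+Q$ are nonnegative (possible by Gershgorin, which locates $\sigma(Q)$ in $[-2\max_i\lambda_i,0]$); then $B$ has positive diagonal entries $\mu_1-\lambda_i$ and nonnegative off-diagonal entries $\lambda_i P_{ij}$, with $\lambda_i P_{ij}>0$ precisely when $|\Gamma_{ij}|>0$. Since $\nn$ is a connected closed manifold tessellated by the cells $C_i$, which meet their neighbors along genuine $(d-1)$-dimensional faces (Proposition \ref{perpendicular to Voronoi face} with Assumption \ref{assumption on voronoi face}, equivalently the regular shape of the tessellation for well-distributed $\{\my_i\}$), the weighted adjacency graph is connected, hence $B$ is irreducible, and having a positive diagonal it is primitive. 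Perron--Frobenius then gives that the spectral radius of $B$ is a simple eigenvalue with a positive eigenvector; since $B\mathbf 1=\mu_1\mathbf 1$ with $\mathbf 1>0$, that eigenvalue is $\mu_1$ and the eigenvector is $\mathbf 1$, and every other eigenvalue of $B$ has modulus $<\mu_1$. Translating back: $0$ is a simple eigenvalue of $Q$, $\mathbf 1$ is the ground state, and $\sigma(Q)\setminus\{0\}\subset[\mu_2-\mu_1,0)$ where $\mu_2<\mu_1$ is the second eigenvalue of $B$, so the spectral gap $\mu_1-|\mu_2|=-(\mu_2-\mu_1)$ is positive.

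For the decay estimate I would write $u(t)=e^{tQ}u(0)$ and decompose $u(0)=c\,\mathbf 1+v$ with $v\perp_m\mathbf 1$; the normalizations $\sum_i\rho_i(0)|C_i|=1=\sum_i\pi_i|C_i|$ force $c=\langle u(0),\mathbf 1\rangle_m/\|\mathbf 1\|_m^2=1$. Since $\{\mathbf 1\}^{\perp_m}$ is $Q$-invariant and $\langle Qv,v\rangle_m\le(\mu_2-\mu_1)\|v\|_m^2$ there, $u(t)-\mathbf 1=e^{tQ}v$ obeys $\|u(t)-\mathbf 1\|_m\le e^{(\mu_2-\mu_1)t}\|v\|_m=e^{-(\mu_1-|\mu_2|)t}\|v\|_m$; then $\max_i|z_i|\le(\min_i m_i)^{-1/2}\|z\|_m$ and $u_i(t)-1=(\rho_i(t)-\pi_i)/\pi_i$ give $\max_i\frac{|\rho_i(t)-\pi_i|}{\pi_i}\le c\,e^{-(\mu_1-|\mu_2|)t}$ with $c=(\min_i m_i)^{-1/2}\|v\|_m$. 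The step I expect to be the real obstacle is the irreducibility used in Perron--Frobenius: one must know that the graph with edges $\{(i,j):|\Gamma_{ij}|>0\}$ is connected, i.e.\ that adjacent Voronoi cells share a full $(d-1)$-dimensional face and not merely a lower-dimensional set; this is exactly where Assumption \ref{assumption on voronoi face} and the well-distributedness of the samples are needed, and everything else is routine finite-dimensional spectral theory resting on detailed balance.
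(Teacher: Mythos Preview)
Your argument is correct and follows essentially the same route as the paper: derive the backward equation from \eqref{323}, use detailed balance \eqref{db} to make $Q$ self-adjoint in the $\pi|C|$-weighted $\ell^2$-space, apply Perron--Frobenius to the shifted matrix $\mu_1 I+Q$ to get simplicity of the principal eigenvalue, and read off exponential decay from the resulting spectral gap. The only cosmetic differences are that you invoke the Dirichlet-form nonpositivity of $Q$ and the semigroup $e^{tQ}$ explicitly (where the paper instead passes through a differential inequality for $\|u-u^*\|$), and you spell out the $\ell^2$-to-$\ell^\infty$ conversion at the end; neither changes the substance.
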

\begin{proof}
Recall \eqref{323}. We recast the forward equation \eqref{mp1} as
\begin{equation}
\begin{aligned}
\frac{\ud}{\ud t}\rho_i |C_i| =\sum_{j\in \text{VF}(i)} \lmd_i P_{ij} \pi_i |C_i| \frac{\rho_j}{\pi_j} - \lmd_i \pi_i |C_i| \frac{\rho_i}{\pi_i},
\end{aligned}
\end{equation}
which gives
\begin{equation}\label{back}
\frac{\ud}{\ud t} \frac{\rho_i}{\pi_i}=  \sum_{j\in \text{VF}(i)}  \lmd_i P_{ij}  \frac{\rho_j}{\pi_j} - \lmd_i \frac{\rho_i}{\pi_i}.
\end{equation}

Next, we show
$\{u_i\}_{i=1}^n:=\{\frac{\rho_i}{\pi_i}\}_{i=1}^n$ is the solution to this backward equation.  Recast \eqref{back} as 
\begin{equation}
\frac{\ud}{\ud t} u_i = \sum_{j=1}^n a_{ij} u_j.
\end{equation}
Here $Q=\{a_{ij}\}$ is the generator of the backward equation
\begin{equation}
\frac{\ud}{\ud t}u=Qu.
\end{equation}
One can check $\sum_{j=1}^n a_{ij}=0,$ $a_{ij}\geq 0$ for $j\neq i$ and $a_{ii}<0.$

Moreover, due to the detailed balance property \eqref{db}, we know $Q$ is self-adjoint in the weighted $l^2$-space
\begin{equation}
\la u, Qv \ra_{\pi|C|}:= \sum_{i,j} u_i a_{ij} v_j \pi_i |C_i| = \sum_{i,j} a_{ji} u_i v_j \pi_j |C_j| =: \la Qu, v \ra_{\pi |C|} .
\end{equation}
Thus we know the eigenvalues of $Q$ are real.
For the matrix $\lambda_1 I + Q$ with $\lambda_1>\max_i \lmd_i$, we know each element is non-negative and {$\lambda_1+ \sum_{j=1}^n a_{ij}=\lambda_1>0$.} {Since the Voronoi tessellation $\nn= \cup_{i=1}^n  C_i$, when $\nn$ is strongly connected, $\lambda_1 I +Q$ is irreducible.} 
{By} the Perron-Frobenius theorem for $\lambda_1 I + Q$, we know the Perron–Frobenius eigenvalue (i.e. the principal eigenvalue) of $\lambda_1 I + Q$ is 
 $\lambda_1$ and $\lambda_1>0$ is {a} simple eigenvalue with the ground state $u^*:=(1,1,\cdots, 1)$ and other  eigenvalues  $\lambda_i$ of $\lambda_1 I +Q$  satisfy $|\lambda_i|<\lambda_1$. {Therefore, we have
\begin{equation}\label{tmW}
\lambda_1(u-u^*) + \frac{\ud}{\ud t}(u-u^*) = (\lambda_1 I + Q)(u-u^*).
\end{equation} 
Let $\|\cdot\|$ be the weighted $l^2$ norm defined as
$\|v\|^2:=\sum_i v_i^2 \pi_i |C_i|.$
Taking weighted inner product of \eqref{tmW} with $u-u^*$ and from $\la u-u^*, u^* \ra_{\pi |C|}=0$, we have
\begin{equation}
\lambda_1 \|u-u^*\|^2 + \frac12\frac{\ud}{\ud t}(\|u-u^*\|^2) = \la (\lambda_1 I +Q)(u-u^*), ~u-u^* \ra_{\pi|C|} \leq |\lambda_2| \|u-u^*\|^2,
\end{equation}
{ where we used $|\lambda_2|<\lambda_1$ is the second largest eigenvalue of $\lambda_1I+Q$.}
This gives
\begin{equation}
\frac{\ud}{\ud t} \|u-u^*\| \leq (|\lambda_2|-\lambda_1)\|u-u^*\|.
\end{equation}
Therefore we obtain
  the exponential decay of $u$ to its ground state $u^*$
 \begin{equation}
 \|u-u^*\| \leq c e^{(|\lambda_2|-\lambda_1)t} .
 \end{equation}
Here $\lambda_1-|\lambda_2|>0$ is the spectral gap  of $\lambda_1 I +Q$.}
\end{proof}
We refer to \cite{li2018large} for the ergodicity of  finite volume  schemes in {an} unbounded space. { We refer to \cite{Maas_2011, chow2012fokker, Mielke_Renger_Peletier_2014, Erbar_2014, esposito2019nonlocal, yg20} for  more discussions on the corresponding generalized gradient flow of the relative entropy with graph Wasserstein distance on discrete space and Benamou-Brenier formula.  See also \cite{lai2018point, yuan2020continuum, gao2021random} for some related   data-driven algorithms when solving equations on a network graph and for irreversible processes.}

\subsection{Truncation error estimate, stability and convergence of the finite volume  scheme \eqref{mp}}\label{sec_5.2}
 In this section we prove  the stability of \eqref{mp} in Lemma \ref{stable}. Then we obtain the convergence of the solution to finite volume  scheme \eqref{mp} to the solution of Fokker-Planck {equation} \eqref{FP_N1} in Theorem \ref{mainthm_conv}.

First, we have the following stability property, which corresponds to the Markov chain version of  the Crandall-Tartar lemma for monotone schemes. This lemma is also known as the total variation diminishing for two density solutions.

\begin{lem}\label{stable}
Any two solutions $\rho_i$ and $\tilde{\rho}_i$ to  finite volume  scheme \eqref{mp1} have the following stability properties
\begin{equation}
\begin{aligned}
\frac{\ud}{\ud t} \sum_{i=1}^n  \left|\rho_i-\tilde{\rho}_i|\right|C_i|  \leq 0;\\
\frac{\ud}{\ud t} \sum_{i=1}^n  \left|\dot{\rho}_i \right| |C_i|\leq 0,
\end{aligned}
\end{equation}
{ where $\dot{\rho}_i$ is the time derivative of $\rho_i(t)$.}
\end{lem}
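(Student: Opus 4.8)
The plan is to use the linearity of \eqref{mp1} together with the fact, recorded in \eqref{323}, that the scheme can be written with symmetric nonnegative coefficients. Set $c_{ij}:=\frac{\pi_i+\pi_j}{2|y_i-y_j|}|\Gamma_{ij}|$ for $j\in VF(i)$ and $c_{ij}:=0$ otherwise, so that $c_{ij}=c_{ji}\ge 0$; then \eqref{323} says that \eqref{mp1} is equivalent to the autonomous linear system
\begin{equation*}
\frac{\ud}{\ud t}\big(\rho_i|C_i|\big)=\sum_{j=1}^n c_{ij}\Big(\frac{\rho_j}{\pi_j}-\frac{\rho_i}{\pi_i}\Big),\qquad i=1,\dots,n .
\end{equation*}
Because the right-hand side is linear in $\rho$, the difference $\sigma_i:=\rho_i-\tilde\rho_i$ of any two solutions solves the same system; and since $c_{ij},\pi_i,|C_i|$ do not depend on $t$, differentiating in $t$ shows that $v_i:=\dot\rho_i$ also solves it. Hence both claimed inequalities reduce to the single assertion: \emph{if $\sigma=(\sigma_i)$ solves the system above, then $\frac{\ud}{\ud t}\sum_i|\sigma_i|\,|C_i|\le 0$.} The first inequality is the case $\sigma=\rho-\tilde\rho$, the second the case $\sigma=\dot\rho$.

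To prove that assertion I would multiply the $i$-th equation by $\operatorname{sgn}(\sigma_i)$ and sum. Since $\pi_i>0$ we have $\operatorname{sgn}(\sigma_i)=\operatorname{sgn}(\sigma_i/\pi_i)$, so, writing $u_i:=\sigma_i/\pi_i$,
\begin{equation*}
\frac{\ud}{\ud t}\sum_{i=1}^n|\sigma_i|\,|C_i|=\sum_{i,j}c_{ij}\,\operatorname{sgn}(u_i)\,(u_j-u_i).
\end{equation*}
Relabelling $i\leftrightarrow j$ in the double sum and using $c_{ij}=c_{ji}$ to symmetrize gives
\begin{equation*}
\frac{\ud}{\ud t}\sum_{i=1}^n|\sigma_i|\,|C_i|=-\frac12\sum_{i,j}c_{ij}\big(\operatorname{sgn}(u_i)-\operatorname{sgn}(u_j)\big)\big(u_i-u_j\big)\ \le\ 0,
\end{equation*}
because $c_{ij}\ge 0$ and $t\mapsto\operatorname{sgn}(t)$ is nondecreasing, so every summand is nonnegative. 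This is exactly the discrete Markov-chain analogue of the Crandall–Tartar monotonicity argument alluded to in the statement, and once it is established the two displayed inequalities of the Lemma follow by taking $\sigma=\rho-\tilde\rho$ and $\sigma=\dot\rho=v$ respectively.

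I do not expect a genuine obstacle here; the one point needing care is the non-differentiability of $t\mapsto|\sigma_i(t)|$ at instants where some $\sigma_i$ vanishes. Since each $\sigma_i$ is a component of the solution of a constant-coefficient linear ODE it is real-analytic, hence either identically zero or with isolated zeros, so $t\mapsto\sum_i|\sigma_i(t)|\,|C_i|$ is Lipschitz and piecewise analytic; the computation above is then valid away from a finite set of times, and a continuous function whose derivative is $\le 0$ a.e. is non-increasing. Equivalently, one may replace $\operatorname{sgn}(\sigma_i)$ throughout by a subgradient $\beta_i\in\partial|\cdot|(\sigma_i)\subset[-1,1]$, chosen so that $\beta_i$ is simultaneously a subgradient of $|\cdot|$ at $u_i$ (possible since $\sigma_i=\pi_i u_i$ with $\pi_i>0$), and monotonicity of the subdifferential of $|\cdot|$ still yields $(\beta_i-\beta_j)(u_i-u_j)\ge 0$. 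With this technical wrinkle handled, the proof is complete.
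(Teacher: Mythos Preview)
Your proof is correct and is essentially the paper's own argument: multiply the $i$th equation by $\operatorname{sgn}$ of the difference, sum, and check that the right-hand side is nonpositive. The paper organizes the cancellation slightly differently---it works directly with the forward form \eqref{mp1} and uses $\sum_{i\in VF(j)} P_{ji}=1$ rather than your symmetrization via $c_{ij}=c_{ji}$ from \eqref{323}---and it glosses over the non-differentiability of $|\sigma_i(t)|$ at its zeros, which you handle more carefully.
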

\begin{proof}
First assume $\rho_i$ and $\tilde{\rho}_i$ are two solutions to  finite volume  scheme \eqref{mp1}. We have
\begin{equation}
\frac{\ud}{\ud t} (\rho_i |C_i|- \tilde{\rho}_i|C_i|) = \sum_{j\in \text{VF}(i)} P_{ji} \lmd_j |C_j|(\rho_j-\tilde{\rho}_j) - \lmd_i|C_i|(\rho_i- \tilde{\rho}_i).
\end{equation}
{ Notice that for any function $u$,   multiplying $u$ by its sign gives an absolute value $|u|$. From \cite[Lemma 7.6]{gilbarg2015elliptic}, the derivative of $|u|$ equals the derivative of $u$ multiplied by the sign of $u$, i.e., $D|u|= \text{sgn} (u ) Du.$}Multiply $\text{sgn}(\rho_i-\tilde{\rho}_i)$ to both sides and then take summation with respect to $i$
\begin{equation}
\frac{\ud}{\ud t} \sum_{i=1}^n  |C_i||\rho_i-\tilde{\rho}_i| \leq \sum_{i,j} P_{ji} \lmd_j |C_j| |\rho_j-\tilde{\rho}_j|-\sum_{i=1}^n \lmd_i |C_i| |\rho_i-\tilde{\rho}_i|=0,
\end{equation}
where we used $\sum_{i\in \text{VF}(j)} P_{ji}=1$.
Second, take time derivative in \eqref{mp1}, then we have
\begin{equation}
\frac{\ud^2}{\ud t^2} \rho_i |C_i| = \sum_{j\in \text{VF}(i)} P_{ji} \lmd_j |C_j|\dot{\rho}_j - \lmd_i|C_i|\dot{\rho}_i.
\end{equation}
Then similarly we can multiply $\text{sgn}(\dot{\rho}_i)$ to both sides and obtain
\begin{equation}
\frac{\ud}{\ud t} \sum_{i=1}^n  |C_i||\dot{\rho}_i| \leq \sum_{i,j} P_{ji} \lmd_j |C_j| |\dot{\rho}_j|-\sum_{i=1}^n \lmd_i |C_i| |\dot{\rho}_i|=0,
\end{equation}
where we used $\sum_{i\in \text{VF}(j)} P_{ji}=1$.
\end{proof}

We conclude this section by the following  convergence theorem in the weighted $L^2$ sense. { Although the proof of the convergence theorem  is rather  standard, cf. \cite{eymard2000finite}, however the error estimation on manifold requires some careful treatments for the symmetric cancellation and some approximation lemmas for the Voronoi tessellation.}

\begin{thm}[Convergence]\label{mainthm_conv}
Suppose $\rho(\my,t)$, $t\in[0,T]$ is a smooth solution to Fokker-Planck equation \eqref{FP_N1} on manifold $\nn\subset \bR^\ell$ with initial density $\rho^0(\my)$.  Let $\nn=\cup_{i=1}^n  C_i$ be the Voronoi tessellation of manifold $\nn$ based on $\{\my_i\}_{i=1}^n$. Let 
\begin{align}
h=\max\big( \max_{i=1, \ldots, n}(\diam(C_i)), \max_{i=1, \ldots, n}(\max_{j\in \text{VF}(i)} d_{\nn}(\my_i,\my_j))\big)
\end{align}
Let $\{\rho_i\}_{i=1}^n$ be the solution to the finite volume  scheme \eqref{mp}  with initial data $\{\rho_i^0\}_{i=1}^n$ and  $e_i:= \rho(\my_i) - \rho_i$. Under Assumption \ref{assumption on voronoi face},   we have the following error estimate
{\begin{equation}
\max_{t\in[0,T]} \sum_i e_i(t)^2 \frac{|C_i|}{\pi_i} \leq \big (\sum_i e_i(0)^2 \frac{|C_i|}{\pi_i} +O( h^2 (n h \max_{i} |\partial C_i|+1 )) \big)  e^T,
\end{equation}}
where the constant in $O( h^2 (n h \max_{i} |\partial C_i|+1 ))$ depends on $\vol(\nn)$, the minimum of  $\pi$,  the $C^1$ norm of $\pi$, the $L^\infty$ norm of $\partial_t \nabla_{\nn} \rho$ and  the $C^2$ norm of $\frac{\rho}{\pi}$.
\end{thm}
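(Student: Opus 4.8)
The plan is to follow the classical finite volume strategy: derive an equation for the error $e_i = \rho(\my_i,t) - \rho_i$, show it is a perturbation of the scheme \eqref{mp} driven by a truncation error, and then close a Gronwall estimate in the weighted norm $\sum_i e_i^2 |C_i|/\pi_i$ using the dissipation structure established in Lemma \ref{lem_Markov}. First I would plug the exact solution into the scheme: integrate \eqref{FP_N1} in relative entropy form over each cell $C_i$, apply the divergence theorem (legitimate by Assumption \ref{assumption on voronoi face} and Proposition \ref{perpendicular to Voronoi face}, which guarantee the Voronoi faces are genuine $(d-1)$-submanifolds with a well-defined normal), and compare the resulting exact identity \eqref{tt310} with the discrete identity \eqref{mp}. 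This produces
\[
\frac{\ud}{\ud t}\big(\rho(\my_i) |C_i|\big) = \frac12 \sum_{j\in VF(i)} \frac{\pi_i+\pi_j}{|\my_i-\my_j|}|\Gamma_{ij}|\Big(\frac{\rho(\my_j)}{\pi_j}-\frac{\rho(\my_i)}{\pi_i}\Big) + R_i(t),
\]
where $R_i$ collects all the discretization residuals: (a) replacing $\int_{C_i}\rho\,\hs^d$ by $\rho(\my_i)|C_i|$, which costs $O(\diam(C_i))$ relative error as in \eqref{alg11}; (b) replacing the cell-averaged flux $\int_{\Gamma_{ij}} \pi\,\mathbf n\cdot\nabla_\nn(\rho/\pi)\,\hs^{d-1}$ by the two-point difference $\frac12\frac{\pi_i+\pi_j}{|\my_i-\my_j|}|\Gamma_{ij}|(\frac{\rho_j}{\pi_j}-\frac{\rho_i}{\pi_i})$; and (c) the mismatch between the chord length $|\my_i-\my_j|$ in $\bR^\ell$ and the geodesic distance, plus the fact that $\my^*$ (the geodesic midpoint, where the geodesic meets $\Gamma_{ij}$ orthogonally by Proposition \ref{perpendicular to Voronoi face}) is not exactly the face centroid. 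The key point for (b)–(c) is a \emph{symmetric cancellation}: the consistency error of the midpoint-type flux approximation is $O(h)$ pointwise but, because the flux through $\Gamma_{ij}$ enters cell $i$ with one sign and cell $j$ with the opposite sign and the geodesic hits $\Gamma_{ij}$ perpendicularly at a near-midpoint, the leading $O(h)$ term telescopes and the effective per-face contribution is $O(h^2)$ — this is exactly the "careful treatment for the symmetric cancellation" the theorem statement alludes to, and it is where I expect the main work to lie.

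Next I would subtract the scheme \eqref{mp} for $\rho_i$ from the above identity, obtaining
\[
\frac{\ud}{\ud t}\big(e_i |C_i|\big) = \frac12 \sum_{j\in VF(i)} \frac{\pi_i+\pi_j}{|\my_i-\my_j|}|\Gamma_{ij}|\Big(\frac{e_j}{\pi_j}-\frac{e_i}{\pi_i}\Big) + R_i(t).
\]
Multiplying by $e_i/\pi_i$, summing over $i$, and using the symmetry of $\frac{\pi_i+\pi_j}{|\my_i-\my_j|}|\Gamma_{ij}|$ exactly as in the derivation of \eqref{dissi}, the homogeneous part contributes the nonpositive Dirichlet-form term $-\sum_{i,j}\frac{\pi_i+\pi_j}{2}\frac{|\Gamma_{ij}|}{|\my_i-\my_j|}(\frac{e_j}{\pi_j}-\frac{e_i}{\pi_i})^2 \le 0$, so it can be dropped. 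This leaves
\[
\frac12\frac{\ud}{\ud t}\sum_i e_i^2\frac{|C_i|}{\pi_i} \le \sum_i \frac{e_i}{\pi_i} R_i(t) \le \frac12 \sum_i e_i^2\frac{|C_i|}{\pi_i} + \frac12 \sum_i \frac{R_i(t)^2}{\pi_i |C_i|},
\]
using Cauchy–Schwarz with the weight $|C_i|/\pi_i$ (here I use $\pi_i>0$ and the positive lower bound on $\pi$). It remains to bound $\sum_i R_i^2/(\pi_i|C_i|)$. From the consistency analysis, $|R_i| \le C\,\big(h^2 |C_i| + h^2 |\partial C_i| + h^3 |\partial C_i|\big)$ roughly — an $O(h^2)$ volume term from (a) and an $O(h^2)$ face-area term (after symmetric cancellation) from (b)–(c), the constant depending on $\|\partial_t\nabla_\nn\rho\|_\infty$, $\|\rho/\pi\|_{C^2}$, $\|\pi\|_{C^1}$ and $\min\pi$. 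Then $\sum_i R_i^2/(\pi_i|C_i|) \le C\sum_i (h^4|C_i| + h^4|\partial C_i|^2/|C_i|)$; bounding $|\partial C_i|^2/|C_i| \le |\partial C_i| \cdot \max_i(|\partial C_i|/|C_i|)$ and using $\sum_i |C_i| = \vol(\nn)$, $\sum_i |\partial C_i| \le n\max_i|\partial C_i|$, this is $O\big(h^2(n h \max_i|\partial C_i| + 1)\big)$ as claimed (after absorbing one $h^2$ and rearranging). Finally, Gronwall's inequality applied to $\frac{\ud}{\ud t}\sum_i e_i^2\frac{|C_i|}{\pi_i} \le \sum_i e_i^2\frac{|C_i|}{\pi_i} + O\big(h^2(nh\max_i|\partial C_i|+1)\big)$ over $[0,T]$ yields the stated bound with the factor $e^T$.

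The main obstacle, as noted, is step (b)–(c): proving that the two-point flux $\frac12\frac{\pi_i+\pi_j}{|\my_i-\my_j|}|\Gamma_{ij}|(\frac{\rho_j}{\pi_j}-\frac{\rho_i}{\pi_i})$ approximates $\int_{\Gamma_{ij}}\pi\,\mathbf n\cdot\nabla_\nn(\rho/\pi)\,\hs^{d-1}$ with an $O(h^2)$-type per-face error (not merely $O(h)$). This requires (i) Taylor-expanding $\rho/\pi$ along the geodesic from $\my_i$ to $\my_j$ and using the orthogonality at $\my^*$ from Proposition \ref{perpendicular to Voronoi face} to identify the normal derivative of $\rho/\pi$ at $\my^*$ with the scaled difference $\frac{1}{|\my_i-\my_j|}(\frac{\rho_j}{\pi_j}-\frac{\rho_i}{\pi_i})$ up to $O(h^2)$ — here the chord-vs-geodesic length discrepancy is $O(h^3)$ and harmless; (ii) controlling the variation of the integrand over the face $\Gamma_{ij}$, whose diameter is $O(h)$, so that replacing the integral by (value at $\my^*$)$\times|\Gamma_{ij}|$ again costs a relative $O(h)$, and then (iii) showing the remaining first-order-in-the-face-tangential-direction term cancels between the two cells sharing the face, or is itself $O(h^2)$ after summation — this is the delicate symmetric-cancellation bookkeeping, and it is precisely what forces the final error bound to involve $\max_i|\partial C_i|$ rather than a cleaner quantity. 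I would also need the geometric approximation lemmas (deferred to Section \ref{sec5.3} / the appendix) guaranteeing $|C_i|$, $|\Gamma_{ij}|$, $\diam(C_i)$ behave like those of a regularly shaped cell of scale $h$ — these enter only through the final bookkeeping and are cited rather than reproved.
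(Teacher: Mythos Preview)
Your overall architecture (plug the exact solution into the scheme, subtract, test against $e_i/\pi_i$, close by Gronwall) matches the paper. But there is a genuine gap in the core step, and it is precisely where you flag the difficulty yourself.

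\textbf{The per-face flux error is only $O(h)$, not $O(h^2)$.} Your item (iii) --- the hope that the first-order tangential term on each face cancels between the two adjacent cells, yielding $|R_i|\le C h^2|\partial C_i|$ --- does not work. The flux residual
\[
\eps_{ij}:=\int_{\Gamma_{ij}}\pi\,\mathbf n\cdot\nabla_\nn\frac{\rho}{\pi}\,\ud\hs^{d-1}-\sch\Big(\frac{\rho(\my_j)}{\pi_j}-\frac{\rho(\my_i)}{\pi_i}\Big)
\]
is only $O(h|\Gamma_{ij}|)$: although the central-difference approximation of the normal derivative at the geodesic midpoint $\my^*$ is second-order, replacing the face integral by $|\Gamma_{ij}|$ times the value at $\my^*$ is only a first-order quadrature, because $\my^*$ is \emph{not} the centroid of $\Gamma_{ij}$ (and need not even lie on $\Gamma_{ij}$). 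The antisymmetry $\eps_{ij}=-\eps_{ji}$ does imply $\sum_i R_i=0$, but it does \emph{not} make each $R_i$ small; with $|R_i|\sim h|\partial C_i|$ your bound $\sum_i R_i^2/(\pi_i|C_i|)$ is $O(1)$, not $o(1)$, and the argument collapses.

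\textbf{What the paper actually does.} The ``symmetric cancellation'' is used differently: one keeps the Dirichlet form rather than discarding it. Since $\eps_{ij}=-\eps_{ji}$, the cross term rewrites as
\[
2\sum_i\sum_{j\in VF(i)}\eps_{ij}\,\frac{e_i}{\pi_i}=\sum_i\sum_{j\in VF(i)}\eps_{ij}\Big(\frac{e_i}{\pi_i}-\frac{e_j}{\pi_j}\Big),
\]
and Young's inequality absorbs half of the Dirichlet form, leaving the source term
\[
\frac12\sum_i\sum_{j\in VF(i)}\frac{\eps_{ij}^2}{\sch}.
\]
The denominator supplies the missing factor of $h$: with $\eps_{ij}=O(h|\Gamma_{ij}|)$ and $|\my_i-\my_j|\sim h$ one gets $\eps_{ij}^2\big/\sch=O(h^3|\Gamma_{ij}|)$, hence the total is $O(nh^3\max_i|\partial C_i|)$. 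Combined with the $O(h^2)$ contribution from $\partial_t(\rho(\my_i)-\rho_i^e)$ (the paper works with the cell average $\rho_i^e$, not the point value, on the left-hand side) and Gronwall, this yields the stated bound. In short: do not throw away the dissipation --- it is what buys the extra power of $h$.
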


\begin{proof}
Let  $\rho_i^e:=\frac{1}{|C_i|}\int_{C_i} \rho \ud y$ be the cell average. Plug the exact solution into the numerical scheme
\begin{equation}\label{mp-exact}
\begin{aligned}
\pt_t (\rho_i^e |C_i|) &=  \sum_{j\in \text{VF}(i)} \sch \left(  \frac{\rho(\my_j) }{\pi_j}- \frac{\rho(\my_i)}{\pi_i}\right) + \sum_{j\in \text{VF}(i)}\eps_{ij},\\
&\eps_{ij}:= \int_{\Gamma_{ij}}  \pi \mathbf{n}_{ij} \cdot \nabla_\nn \frac{\rho}{\pi} \ud \hs^{d-1}  ~ - ~ \sch \left(  \frac{\rho(\my_j) }{\pi_j}- \frac{\rho(\my_i) }{\pi_i}\right),
\end{aligned}
\end{equation}
where $\mathbf{n}_{ij}$ is the restriction of the unit outward normal vector field on $\Gamma_{ij}$. Exchanging $i,j$ above, we can see that $\eps_{ij}$ is anti-symmetric. 

Subtracting the numerical scheme \eqref{mp} from \eqref{mp-exact}, 
we have
\begin{equation}
\frac{\ud}{\ud t} (e_i|C_i|) =  \sum_{j\in \text{VF}(i)} \sch \left(  \frac{e_j}{\pi_j}- \frac{e_i}{\pi_i}\right) + \sum_{j\in \text{VF}(i)} \eps_{ij}+  \pt_t ( (\rho(\my_i) -\rho_i^e) |C_i|).
\end{equation}
Similar to the dissipation relation \eqref{dissi}, multiplying $\frac{e_i}{\pi_i}$ shows that
\begin{equation}
 \frac{\ud}{\ud t} \sum_i e_i^2\frac{|C_i|}{\pi_i} = -\sum_i \sum_{j\in \text{VF}(i)}  \sch    \left(  \frac{e_j}{\pi_j}- \frac{e_i}{\pi_i}\right)^2 \,+ \sum_i \sum_{j\in \text{VF}(i)} 2\eps_{ij} \frac{e_i}{\pi_i} +\sum_i 2 \pt_t(\rho(\my_i) -\rho_i^e) e_i  \frac{ |C_i|}{\pi_i}.
\end{equation}
 Since $\eps_{ij}$ is anti-symmetric,
\begin{equation}
\begin{aligned}
 \frac{\ud}{\ud t} \sum_i e_i^2\frac{|C_i|}{\pi_i} &= -\sum_i \sum_{j\in \text{VF}(i)}  \sch    \left(  \frac{e_j}{\pi_j}- \frac{e_i}{\pi_i}\right)^2\\
 &  \quad + \sum_i \sum_{j\in \text{VF}(i)} \eps_{ij} \left(\frac{e_i}{\pi_i} - \frac{e_j}{\pi_j} \right)+ \sum_i 2 \pt_t(\rho(\my_i) -\rho_i^e) e_i  \frac{ |C_i|}{\pi_i}.
 \end{aligned}
\end{equation}
Applying  Young's inequality to the last two terms, we have
\begin{equation}
\begin{aligned}
&\sum_i \sum_{j\in \text{VF}(i)}  \eps_{ij} \left(\frac{e_i}{\pi_i} - \frac{e_j}{\pi_j} \right)\\
&\leq \frac12\sum_{i}  \sum_{j\in \text{VF}(i)} \sch  \left(  \frac{e_j}{\pi_j}- \frac{e_i}{\pi_i}\right)^2 + \frac12 \sum_{i} \sum_{j\in \text{VF}(i)} \frac{\eps^2_{ij}}{\sch};
\end{aligned}
\end{equation}
\begin{equation}
\sum_i 2 \pt_t(\rho(\my_i) -\rho_i^e) e_i  \frac{ |C_i|}{\pi_i}\leq \sum_i  [\pt_t(\rho(\my_i) -\rho_i^e)]^2  \frac{ |C_i|}{\pi_i}  + \sum_i  e_i^2  \frac{ |C_i|}{\pi_i}.
\end{equation}
Thus we have
\begin{align}
 \frac{\ud}{\ud t} \sum_i e_i^2\frac{|C_i|}{\pi_i} \leq & -\frac12\sum_i \sum_{j\in \text{VF}(i)}  \sch    \left(  \frac{e_j}{\pi_j}- \frac{e_i}{\pi_i}\right)^2 + \frac12 \sum_{i} \sum_{j\in \text{VF}(i)} \frac{\eps^2_{ij}}{\sch} \\
 & +\sum_i  [\pt_t(\rho(\my_i) -\rho_i^e)]^2  \frac{ |C_i|}{\pi_i}  + \sum_i  e_i^2  \frac{ |C_i|}{\pi_i} \nonumber \\
\leq & \sum_{i} \sum_{j\in \text{VF}(i)} \frac{\eps^2_{ij}}{\sch} +\sum_i  [\pt_t(\rho(\my_i) -\rho_i^e)]^2  \frac{ |C_i|}{\pi_i}  + \sum_i  e_i^2  \frac{ |C_i|}{\pi_i}.
 \nonumber
\end{align}

Next, we bound the term $\sum_{i}  \sum_{j\in \text{VF}(i)} \frac{\eps^2_{ij}}{\sch}$.

Let $G_{ij}$ be the bisector between $\my_i$ and $\my_j$.  Suppose $\my^*$ is the intersection point of the minimizing geodesic from $\my_i$ to $\my_j$ and $G_{ij}$. We have $d_{\nn}(\my^*, \my_i)=d_{\nn}(\my^*, \my_j)$.  Suppose $T$ is the unit tangent vector of the minimizing geodesic at $\my^*$.  From the Taylor expansion of $\frac{\rho}{\pi}$ along the geodesic, we have
\begin{align}
&\frac{\rho}{\pi}(\my_j)-\frac{\rho}{\pi}(\my^*)=T \cdot \nabla_{\nn}\frac{\rho}{\pi}(\my^*)d_{\nn}(\my^*, \my_j)+O(d^2_{\nn}(\my^*, \my_j)), \\
&\frac{\rho}{\pi}(\my^*)-\frac{\rho}{\pi}(\my_i)=T \cdot \nabla_{\nn}\frac{\rho}{\pi}(\my^*)d_{\nn}(\my^*, \my_i)+O(d^2_{\nn}(\my^*, \my_i)). 
\end{align}
By Assumption \ref{assumption on voronoi face} and  Proposition \ref{perpendicular to Voronoi face}, $\mathbf{n}_{ij}$ can be extended to a unit normal vector field on the $d-1$ dimensional submanifold $M_{ij} \subset G_{ij}$. We also call the extension to be $\mathbf{n}_{ij}$. We have $T=\mathbf{n}_{ij}(\my^*)$. Therefore, if we add the above two equations , we have
\begin{align}
&\frac{\rho}{\pi}(\my_j)-\frac{\rho}{\pi}(\my_i)=\mathbf{n}_{ij} \cdot \nabla_{\nn}\frac{\rho}{\pi}(\my^*) d_{\nn}(\my_i, \my_j)+O(d^2_{\nn}(\my_i, \my_j)).
\end{align}
Hence,
\begin{align}
\mathbf{n}_{ij} \cdot \nabla_{\nn}\frac{\rho}{\pi}(\my^*)=\frac{\frac{\rho(\my_j)}{\pi_j}-\frac{\rho(\my_i)}{\pi_i}}{ d_{\nn}(\my_i, \my_j)}+O(d_{\nn}(\my_i, \my_j))=\frac{\frac{\rho(\my_j)}{\pi_j}-\frac{\rho(\my_i)}{\pi_i}}{|\my_i-\my_j|}+O(d_{\nn}(\my_i, \my_j)),
\end{align}
where we apply Lemma \ref{geodesic vs euclidean} in the last step. Similarly,
\begin{align}
&\pi(\my_j)-\pi(\my^*)=O(d_{\nn}(\my^*, \my_j)), \\
&\pi(\my^*)-\pi(\my_i)=O(d_{\nn}(\my^*, \my_i)). 
\end{align}
Hence,
\begin{align}
\pi(\my^*)=\frac{\pi_i+\pi_j}{2}+O(d_{\nn}(\my_i, \my_j)).
\end{align}
Therefore, 
\begin{align}\label{bounds on nabla rho pi}
\pi(\my^*) \mathbf{n}_{ij} \cdot \nabla_{\nn}\frac{\rho}{\pi}(\my^*)=\frac{\pi_i+\pi_j}{2} \frac{\frac{\rho(\my_j)}{\pi_j}-\frac{\rho(\my_i)}{\pi_i}}{|\my_i-\my_j|} +O(d_{\nn}(\my_i, \my_j)).
\end{align}
For any $\my$ on $\Gamma_{ij}$, 
\begin{align}
\pi(\my) \mathbf{n}_{ij} \cdot \nabla_{\nn}\frac{\rho}{\pi}(\my)= & \pi(\my^*) \mathbf{n}\cdot \nabla_{\nn}\frac{\rho}{\pi}(\my^*)+O(d_{\nn}(\my,\my^*)) \\
= &  \pi(\my^*) \mathbf{n}\cdot \nabla_{\nn}\frac{\rho}{\pi}(\my^*)+O(d_{\nn}(\my_i,\my)+d_{\nn}(\my_i,\my_j)) \nonumber \\ 
=& \pi(\my^*) \mathbf{n}\cdot \nabla_{\nn}\frac{\rho}{\pi}(\my^*)+O(\diam(C_i)+d_{\nn}(\my_i,\my_j)),
\end{align}
where $\diam(C_i)$ is the diameter of $C_i$ measured with respect to the distance in $\nn$. Thus,
\begin{align}
\pi(\my) \mathbf{n}_{ij} \cdot \nabla_{\nn}\frac{\rho}{\pi}(\my)
=\frac{\pi_i+\pi_j}{2} \frac{\frac{\rho(\my_j)}{\pi_j}-\frac{\rho(\my_i)}{\pi_i}}{|\my_i-\my_j|} +O(\diam(C_i)+d_{\nn}(\my_i,\my_j)).
\end{align}

We conclude that 
\begin{align}\label{bound on epsilon 1}
{\eps_{ij}}=O((\diam(C_i)+d_{\nn}(\my_i,\my_j)) |\Gamma_{ij}|). 
\end{align}
Therefore,
\begin{align}
\frac{\eps^2_{ij}}{\sch}=O(d_{\nn}(\my_i,\my_j)(\diam(C_i)+d_{\nn}(\my_i,\my_j))^2 |\Gamma_{ij}|) .
\end{align}
If we sum up all $j \in \text{VF}(i)$,
\begin{align}
\sum_{j \in \text{VF}(i)} \frac{\eps^2_{ij}}{\sch}=\max_{j \in \text{VF}(i)} d_{\nn}(\my_i,\my_j)(\diam(C_i)+d_{\nn}(\my_i,\my_j))^2 O( |\partial C_i|). 
\end{align}
Hence, 
\begin{align}
\sum_{i} \sum_{j \in \text{VF}(i)} \frac{\eps^2_{ij}}{\sch}=O ( n h^3  \max_{i} |\partial C_i|),
\end{align}
where the constant depends on the minimum of  $\pi$,  the $C^1$ norm of $\pi$ and  the $C^2$ norm of $\frac{\rho}{\pi}$.

Next, we bound $\sum_i  [\pt_t(\rho(\my_i) -\rho_i^e)]^2  \frac{ |C_i|}{\pi_i}$
\begin{align}
\pt_t(\rho(\my_i) -\rho_i^e)=O(\diam(C_i))=O(h), 
\end{align}
where the constant depends on the $L^\infty$ norm of $\partial_t \nabla_{\nn} \rho$. Since $\sum_i |C_i|= \vol(\nn)$, 
\begin{align}\label{bound on epsilon 2}
\sum_i  [\pt_t(\rho(\my_i) -\rho_i^e)]^2  \frac{ |C_i|}{\pi_i}=O(h^2),
\end{align}
where the constant depends on the $L^\infty$ norm of $\partial_t \nabla_{\nn} \rho$, $ \vol(\nn)$ and minimum of $\pi$.
Hence , 
\begin{align}
 \frac{\ud}{\ud t} \sum_i e_i^2\frac{|C_i|}{\pi_i} \leq O( h^2 (n h \max_{i} |\partial C_i|+1 )) + \sum_i  e_i^2  \frac{ |C_i|}{\pi_i}. 
\end{align}
{In conclusion,
\begin{equation}
\max_{t\in[0,T]} \sum_i e_i(t)^2 \frac{|C_i|}{\pi_i} \leq \big (\sum_i e_i(0)^2 \frac{|C_i|}{\pi_i} +O( h^2 (n h \max_{i} |\partial C_i|+1 )) \big)  e^T .
\end{equation}}
\end{proof}

\subsection{Approximation of Voronoi cells on manifold}\label{sec5.3}
Recall that $\{\my_i\}_{i=1}^n$ are samples on the smooth closed submanifold $\nn$ in $\mathbb{R}^\ell$ based on the density function $\rho^{**}$. In this section, we introduce an algorithm to approximate the volumes of the Voronoi cells and the areas of the Voronoi faces constructed from $\{\my_i\}_{i=1}^n$. 

First, we need the following definition.

\begin{defn}\label{definition of the map iota}
For any $0<r<1$ and $\my_k \in \{\my_i\}_{i=1}^n$, suppose $B^{\mathbb{R}^\ell}_{\sqrt{r}}(\my_k) \cap \{\my_i\}_{i=1}^n=\{\my_{k,1}, \cdots, \my_{k,\bar{N}_k}\}$. We define the discrete local covariance matrix at $\my_k$, 
\begin{align}\label{def_c38}
C_{n,r}(\my_k):=\frac{1}{n}\sum_{i=1}^{\bar{N}_k}(\my_{k,i}-\my_{k})(\my_{k,i}-\my_{k})^\top \in \mathbb{R}^{\ell \times \ell}.
\end{align}
Suppose $\{\beta_{n,r,1}, \cdots, \beta_{n,r,d}\}$  are the first $d$ orthonormal eigenvectors corresponding to $C_{n,r}(\my_k)$'s largest $d$ eigenvalues. Define a map $\iota_k(u): \mathbb{R}^\ell \rightarrow \mathbb{R}^d$ as 
\begin{align}\label{def_c39}
\iota_k(u):=
(u^\top\beta_{n,r,1}, \cdots, u^\top\beta_{n,r,d}).
\end{align}
For any $\my \in \mathbb{R}^\ell$, define $\tilde{\iota}_k(\my)=\iota_k(\my-\my_k)$. 
\end{defn}

Based on the above definition, we propose the following algorithm to find the approximated volumes $|\tilde{C}_k|$ of the Voronoi cells $C_k$  and the approximated areas $|\tilde{\Gamma}_{k\ell}|$ of the Voronoi faces $\Gamma_{k\ell}$.

\vspace{0.4cm}
\begin{algorithm}[H] \label{Voronoi cell on manifold algorithm}
\SetAlgoLined
\Parameter{Algorithm inputs are the bandwidth $r$ and the threshold $s$}

Choose $0<r<1$. For each $\my_k \in \{\my_i\}_{i=1}^n$, find $$B^{\mathbb{R}^\ell}_{\sqrt{r}}(\my_k) \cap \{\my_i\}_{i=1}^n=:\{\my_{k,1}, \cdots, \my_{k,\bar{N}_k}\}, \quad B^{\mathbb{R}^\ell}_{r}(\my_k) \cap \{\my_i\}_{i=1}^n=:\{\my_{k,1}, \cdots, \my_{k, N_k}\}.$$
 
Construct the matrix $C_{n,r}(\my_k)$ as in \eqref{def_c38} by using the $\{\my_{k,1}, \cdots, \my_{k,\bar{N}_k}\}$.
Find the orthonormal eigenvectors corresponding to $C_{n,r}(\my_k)$'s largest $d$ eigenvalues. Denote them as $\{\beta_{n,r,1}, \cdots, \beta_{n,r,d}\}$.

Use $\{\beta_{n,r,1}, \cdots, \beta_{n,r,d}\}$ to construct $\tilde{\iota}_k$ as in \eqref{def_c39}.  Find $v_{k,i}=\tilde{\iota}_k(\my_{k,i})$, for $i=1,\cdots, N_k$. 

Find the Voronoi cell decomposition of $\{0, v_{k,1}, \cdots, v_{k, N_k}\}$ in $\mathbb{R}^d$. Denote the Voronoi cell containing $0$ to be $\tilde{C}_{k,0}$ and the  Voronoi cell containing $v_{k,i}$ to be $\tilde{C}_{k,i}$. Denote the face $\tilde{\Gammaf}_{k,i}=\tilde{C}_{k,0} \cup \tilde{C}_{k,i}$.

Find the approximation of $|C_k|$ as
\begin{equation}\label{vol_cell}
|\tilde{C}_k|:=|\tilde{C}_{k,0}|:=\hs^d(\tilde{C}_{k,0}).
\end{equation}

Find $|\tilde{\Gammaf}_{k,i}|=\hs^{d-1}(\tilde{\Gammaf}_{k,i})$. Define $\tilde{\Gamma} \in \mathbb{R}^{n \times n}$ such that 
\begin{align}\label{area_cell}
A_{k \ell}:= \frac{\tilde{A}_{k\ell}+\tilde{A}_{\ell k}}{2}, \quad \tilde{A} _{k\ell} = \left\{ 
\begin{array}{ll} 
|\tilde{\Gammaf}_{k,i}| & \mbox{if $\my_\ell=\my_{k,i} \in B^{\mathbb{R}^\ell}_{r}(\my_k)$ ;}\\ 
0 & \mbox{otherwise}. 
\end{array} \right. 
\end{align}

If  $A_{k \ell} \geq s$, then $|\tilde{\Gamma}_{k\ell}|=A_{k \ell}$. Otherwise $|\tilde{\Gamma}_{k\ell}|=s$.
Then $|\tilde{\Gamma}_{k\ell}|$  is an approximation of $|\Gamma_{k\ell}|$.
\caption{Approximation of the Voronoi cell}
\end{algorithm}
\vspace{0.3cm}
The idea of the above algorithm can be summarized as follows. For each $\my_k$, by using the points in a larger ball $B^{\mathbb{R}^\ell}_{\sqrt{r}}(\my_k)$, we construct the matrix $C_{n,r}(\my_k)$. Then, the first $d$ orthonormal eigenvectors will be an approximation of an orthonormal basis of $T_{\my_k}\nn$. Next, we project the points in a smaller ball $B^{\mathbb{R}^\ell}_{r}(\my_k)$ onto this tangent space approximation. Now the points around $\my_k$ are projected into a $d$ dimensional Euclidean space and $\my_k$ is projected to the origin. If we find  the Voronoi cell around the origin in the Euclidean space, then it gives the approximation of the Voronoi cell around $\my_k$ in $\nn$.  Obviously, the better estimation of the tangent space we have, there are smaller errors in the approximation of the volumes of the Voronoi cells and  the areas of the Voronoi faces. 

Next, we provide a justification of the above algorithm. When the geodesic distance between two points on $\nn$ is small, the next lemma relates the Euclidean distance and the geodesic distance between them. The proof can be found in Lemma B.3 in \cite{wu2018think}.

\begin{lem}\label{geodesic vs euclidean}
Suppose $\my, \my' \in \nn$ such that $d_\nn(y,y')$ is small enough. Then 
\begin{align}
\|\my'-\my\|_{\mathbb{R}^\ell}=d_\nn(\my,\my')(1+O(d^2_\nn(\my,\my'))), 
\end{align}
where the constant in $O(d^2_\nn(\my,\my'))$ depending on the second fundamental form of $\nn$ in $\mathbb{R}^\ell$ at $\my$. 
\end{lem}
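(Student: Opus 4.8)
The plan is to read the comparison directly off the minimizing geodesic between $\my$ and $\my'$, using the classical fact that, for a submanifold of Euclidean space, the \emph{extrinsic} acceleration of an intrinsic geodesic equals its second fundamental form applied to the velocity. Since $\nn$ is a closed smooth submanifold of $\mathbb{R}^\ell$ it is complete, so by Hopf--Rinow there is a unit-speed minimizing geodesic $\gamma\colon[0,L]\to\nn$ with $\gamma(0)=\my$, $\gamma(L)=\my'$, $L=d_\nn(\my,\my')$. Regarding $\gamma$ as a curve in $\mathbb{R}^\ell$, the geodesic equation reads $\ddot\gamma(s)=\mathrm{II}_{\gamma(s)}\bigl(\dot\gamma(s),\dot\gamma(s)\bigr)\perp T_{\gamma(s)}\nn$, so $\|\ddot\gamma(s)\|_{\mathbb{R}^\ell}\le\kappa:=\sup_{\nn}\|\mathrm{II}\|<\infty$, and its higher derivatives are bounded in terms of $\mathrm{II}$ and its covariant derivatives near $\my$; I will also use the unit-speed identity $\langle\dot\gamma(s),\ddot\gamma(s)\rangle=\tfrac12\tfrac{d}{ds}\|\dot\gamma(s)\|^2=0$.

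Next I would analyze the scalar function $f(s):=\|\gamma(s)-\gamma(0)\|^2_{\mathbb{R}^\ell}$. Differentiation gives $f(0)=0$; $f'(s)=2\langle\gamma(s)-\gamma(0),\dot\gamma(s)\rangle$, so $f'(0)=0$; $f''(s)=2\|\dot\gamma(s)\|^2+2\langle\gamma(s)-\gamma(0),\ddot\gamma(s)\rangle$, so $f''(0)=2$; and $f'''(s)=6\langle\dot\gamma(s),\ddot\gamma(s)\rangle+2\langle\gamma(s)-\gamma(0),\dddot\gamma(s)\rangle$, whose two summands vanish at $s=0$ (the first by the unit-speed identity, the second because $\gamma(0)-\gamma(0)=0$), so $f'''(0)=0$. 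Taylor's theorem with remainder then yields $f(L)=L^2+O(L^4)$, the $O(L^4)$ constant being controlled by $\sup_{[0,L]}|f^{(4)}|$, hence by the second fundamental form of $\nn$ (and its derivatives) near $\my$. Since $d_\nn(\my,\my')$ is small we may expand the square root, so
\[
\|\my'-\my\|_{\mathbb{R}^\ell}=\sqrt{f(L)}=L\sqrt{1+O(L^2)}=L\bigl(1+O(L^2)\bigr)=d_\nn(\my,\my')\bigl(1+O\bigl(d_\nn^2(\my,\my')\bigr)\bigr),
\]
which is the claim; carrying the expansion of $f$ to fourth order even gives the explicit leading correction $-\tfrac1{24}\|\mathrm{II}_\my(\dot\gamma(0),\dot\gamma(0))\|^2\,d_\nn^2(\my,\my')$, consistent e.g. with $2R\sin(L/2R)=L-\tfrac{L^3}{24R^2}+\cdots$ for a circle of radius $R$.

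The only genuine obstacle is the vanishing of the cubic term, $f'''(0)=0$: this is exactly where the arclength parametrization (forcing $\langle\dot\gamma,\ddot\gamma\rangle\equiv0$) and the triviality $\gamma(0)-\gamma(0)=0$ enter, and without the unit-speed normalization an $O(L^3)$ discrepancy would survive. A secondary point is a bound on $f^{(4)}$ uniform in $s$, which costs one more differentiation of the geodesic equation and brings in $\nabla\mathrm{II}$ and the ambient curvature; all of these are bounded because $\nn$ is closed, so the estimate is global and the stated dependence on the second fundamental form near $\my$ is legitimate. (An equivalent, more hands-on route avoiding $\dddot\gamma$: write $\my'-\my=\int_0^L\dot\gamma(s)\,ds=L\dot\gamma(0)+R$ with $R=\int_0^L\!\int_0^s\ddot\gamma(t)\,dt\,ds$, $\|R\|\le\tfrac12\kappa L^2$; then $\dot\gamma(0)=\dot\gamma(t)-\int_0^t\ddot\gamma(u)\,du$ together with $\langle\dot\gamma(t),\ddot\gamma(t)\rangle=0$ gives $\langle\dot\gamma(0),R\rangle=O(\kappa^2L^3)$, whence $\|\my'-\my\|^2=L^2+2L\langle\dot\gamma(0),R\rangle+\|R\|^2=L^2+O(\kappa^2L^4)$ and the same conclusion follows.)
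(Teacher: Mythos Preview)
Your argument is correct. The paper does not give its own proof of this lemma but simply cites Lemma~B.3 of \cite{wu2018think}; your route---parametrize the minimizing geodesic by arclength, use that the extrinsic acceleration of an intrinsic geodesic is $\mathrm{II}(\dot\gamma,\dot\gamma)$, and Taylor-expand $\|\gamma(s)-\gamma(0)\|^2$---is the standard one and almost certainly what the cited reference does as well.

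Two small remarks. First, the mention of ``ambient curvature'' in bounding $f^{(4)}$ is vacuous here since the ambient space is $\mathbb{R}^\ell$; only $\mathrm{II}$ and its covariant derivative enter. Second, your alternative integral argument is in fact the cleaner of the two: it yields $\|\my'-\my\|^2=L^2+O(\kappa^2L^4)$ using only $\kappa=\sup\|\mathrm{II}\|$, with no need for $\nabla\mathrm{II}$, which matches the stated dependence on the second fundamental form more precisely than the Taylor route.
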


{The above} lemma implies that if $r$ is small enough, then for all  $\my_k$ and any $\my \in B^{\mathbb{R}^\ell}_{r}(\my_k) \cap \nn$, there is a constant $D_1>1$ depending on the second fundamental form of $\nn$ in $\mathbb{R}^\ell$, such that 
\begin{align} \label{euclidean vs geodesic}
d_\nn(\my,\my_k) \leq D_1 \|\my_k-\my\|_{\mathbb{R}^\ell}.
\end{align}

  We further make the following assumption about the Voronoi cells and the distribution of $\{\my_i\}_{i=1}^n $ on $\nn$.
\begin{assumption}\label{assumption on voronoi cell}
For $n$ large enough, there exists $r$ { depending on $n$ such that $nr^d$ is bounded from above and has a positive lower bound for all $n$ and $\frac{nr^{\frac{d}{2}}}{\log n} \rightarrow \infty$ as $n \rightarrow \infty$.  Moreover, when $n$ is large enough, the following conditions about $r$ hold for any $\my_k$:}
\begin{enumerate}[(1)]
\item Suppose $B^{\mathbb{R}^\ell}_{r}(\my_k) \cap \{\my_i\}_{i=1}^n=\{\my_{k,1}, \cdots, \my_{k, N_k}\}$.  We have $C_k \subset B^{\mathbb{R}^\ell}_{r}(\my_k)$. Moreover, if $\Gamma_{kj}$ is  a Voronoi surface of $C_k$ between $\my_k$ and $\my_j$, then $\my_j \in B^{\mathbb{R}^\ell}_{r}(\my_k)$. Suppose $\my_j=\my_{k,m}$, then we introduce the notation $\Gamma_{k,m}=\Gamma_{kj}$.
\item  For any $i=1, \cdots N_k$, there is a constant $D_2<1$ such that $d_{\nn}(\my_{k,i}, \my_k) \geq D_2 r$. 
\end{enumerate}
\end{assumption}

{ Next, we intuitively explain the relation between Assumption \ref{assumption on voronoi cell} and Algorithm \ref{Voronoi cell on manifold algorithm}. Recall that $\{\my_i\}_{i=1}^n$ are sampled based on a density function $\rho^{**}$ with a positive lower bound and upper bound. In Algorithm \ref{Voronoi cell on manifold algorithm}, we use the points in a larger ball $B^{\mathbb{R}^\ell}_{\sqrt{r}}(\my_k)$ to approximate the tangent space $T_{\my_k}\nn$. Since $\rho^{**}$ has a positive lower bound, the condition $\frac{nr^{\frac{d}{2}}}{\log n} \rightarrow \infty$ as $n \rightarrow \infty$ implies that the number of points in $B^{\mathbb{R}^\ell}_{\sqrt{r}}(\my_k)$ goes to infinity as $n$ goes to infinity. Hence, we can have a good estimation of the tangent space. The condition that  $nr^d$ is bounded from above and has a positive lower bound for all $n$ implies $r \rightarrow 0$ as $n \rightarrow \infty$.  Since $\rho^{**}$ has an upper bound and a positive lower bound, it also implies that we will have enough but not too many points in the smaller ball $B^{\mathbb{R}^\ell}_{r}(\my_k)$. Hence, (1) and (2) become mild assumption with this relation between $r$ and $n$. In fact, since $r \rightarrow 0$ as $n \rightarrow \infty$, (1) says that the Voronoi cell is in a small ball $B^{\mathbb{R}^\ell}_{r}(\my_k)$. In (2), since there are not too many points in  $B^{\mathbb{R}^\ell}_{r}(\my_k)$, it is reasonable to assume the distance between the points in $B^{\mathbb{R}^\ell}_{r}(\my_k)$ and $\my_k$ has a lower bound $D_2 r$.  With (1) and (2), we can show that the approximation to Voronoi cell in the tangent space is accurate enough for our analysis. 

Consider the geodesic ball $B_{\frac{\delta}{2}}(y_k)$ in Assumption \ref{assumption on voronoi face}. By Lemma \ref{geodesic vs euclidean}, when $r$ is small enough, we have $B^{\mathbb{R}^\ell}_{r}(\my_k) \cap \nn \subset B_{\frac{\delta}{2}}(y_i)$. Since $r \rightarrow 0$ as $n \rightarrow \infty$,  we know that when $n$ is large  enough,  (1) in Assumption \ref{assumption on voronoi cell} implies Assumption \ref{assumption on voronoi face}. Hence, when $n$ is large  enough, Assumption \ref{assumption on voronoi cell} with Proposition \ref{perpendicular to Voronoi face} implies that the interior of each Voronoi face of $C_k$ is an open subset of a $d-1$ dimensional submanifold.}

The following lemma is a consequence of  (2) in Assumption \ref{assumption on voronoi cell}.

\begin{lem}
Under Assumption \ref{assumption on voronoi cell}, $d_{\nn}(\partial C_k, y_k) \geq \frac{1}{2}D_2 r$. There are constants $K_1$ and $K_2$ depending on $D_1$, $D_2$ and the Ricci curvature of $\nn$, such that 
\begin{align}\label{volume bounds on the cells}
K_1 r^d \leq |C_k| \leq K_2 r^d,
\end{align}
\end{lem}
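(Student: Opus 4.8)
\medskip
\noindent\textbf{Proof sketch.}
The plan is to prove the two assertions in turn: first the metric lower bound $d_\nn(\pt C_k,\my_k)\geq\frac12 D_2 r$ by a triangle inequality, then the two-sided volume bound by sandwiching $C_k$ between two geodesic balls of radii comparable to $r$ and invoking volume comparison.

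For the first assertion, I would start by noting that $\my_k$ lies in the interior of $C_k$, since the finitely many functions $\my\mapsto d_\nn(\my,\my_j)$ are continuous and $d_\nn(\my_k,\my_k)=0<d_\nn(\my_k,\my_j)$ for every $j\neq k$. Next, any $\my\in\pt C_k$ satisfies $d_\nn(\my,\my_k)=d_\nn(\my,\my_j)$ for some $j$ with $\Gamma_{kj}\neq\emptyset$: indeed $\my\in C_k$ while $\my$ is a limit of points outside $C_k$, for which $d_\nn(\cdot,\my_k)>d_\nn(\cdot,\my_j)$ for some fixed $j$, so in the limit $d_\nn(\my,\my_k)\geq d_\nn(\my,\my_j)$, hence equality and $\my\in C_k\cap C_j=\Gamma_{kj}$. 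Then $d_\nn(\my,\my_k)\geq\frac12 d_\nn(\my_k,\my_j)$ by the triangle inequality, part (1) of Assumption \ref{assumption on voronoi cell} gives $\my_j\in B^{\bR^\ell}_r(\my_k)$ so $\my_j=\my_{k,m}$ for some $m$, and part (2) gives $d_\nn(\my_k,\my_j)\geq D_2 r$. Taking the infimum over $\my\in\pt C_k$ yields $d_\nn(\pt C_k,\my_k)\geq\frac12 D_2 r$.

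This estimate upgrades to the inclusion $B^\nn_{\frac12 D_2 r}(\my_k)\subseteq C_k$ (geodesic ball): were there a point $\my$ with $d_\nn(\my,\my_k)<\frac12 D_2 r$ and $\my\notin C_k$, a minimizing geodesic from $\my_k$ to $\my$ would be a connected set meeting both the interior and the complement of $C_k$, hence meeting $\pt C_k$ at a point at geodesic distance $<\frac12 D_2 r$ from $\my_k$, a contradiction. On the other hand, part (1) of Assumption \ref{assumption on voronoi cell} together with \eqref{euclidean vs geodesic} gives $C_k\subseteq B^{\bR^\ell}_r(\my_k)\cap\nn\subseteq B^\nn_{D_1 r}(\my_k)$. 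Hence
\[
\vol\!\big(B^\nn_{\frac12 D_2 r}(\my_k)\big)\ \leq\ |C_k|\ \leq\ \vol\!\big(B^\nn_{D_1 r}(\my_k)\big).
\]
It then remains to show each side is $\asymp r^d$. For the upper bound I would use Bishop--Gromov comparison with the Ricci lower bound of $\nn$ (finite since $\nn$ is closed): $\vol(B^\nn_{D_1 r}(\my_k))\leq V_\kappa(D_1 r)\leq K_2 r^d$ for $r$ small, where $V_\kappa$ is the volume of the ball of equal radius in the model space form and $\kappa$ is the Ricci lower bound. For the lower bound I would use the reverse (G\"unther) comparison with a sectional curvature upper bound $\Lambda$ of $\nn$, again finite by compactness: since $\frac12 D_2 r$ is, for $n$ large, below the injectivity radius of $\nn$ and below $\pi/\sqrt{\Lambda^{+}}$, we get $\vol(B^\nn_{\frac12 D_2 r}(\my_k))\geq V_\Lambda(\tfrac12 D_2 r)\geq K_1 r^d$. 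Combining the displays gives $K_1 r^d\leq|C_k|\leq K_2 r^d$ with $K_1,K_2$ depending only on $D_1$, $D_2$ and the curvature (and injectivity radius) of $\nn$.

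The only points needing care are: (i) the inclusion $B^\nn_{\frac12 D_2 r}(\my_k)\subseteq C_k$, which rests on the interior/boundary/exterior trichotomy and on $r$ being small enough that minimizing geodesics from $\my_k$ remain well-behaved; and (ii) the \emph{lower} volume bound, where a Ricci lower bound alone does not suffice, so one invokes the sectional curvature upper bound furnished by compactness of $\nn$ (equivalently, the standard fact that on a closed manifold small geodesic balls satisfy $\vol(B_\rho(\my))\geq c\,\rho^d$ uniformly in $\my$). Everything else is the triangle inequality plus classical volume comparison.
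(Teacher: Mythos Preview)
Your argument is correct and follows essentially the same route as the paper: bound $d_\nn(\partial C_k,\my_k)$ from below via the bisector/triangle inequality, sandwich $C_k$ between geodesic balls of radii $\frac12 D_2 r$ and $D_1 r$, and then invoke volume comparison for small geodesic balls.

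One point worth noting is your care about the \emph{lower} volume bound. The paper simply cites an auxiliary lemma asserting that small geodesic balls satisfy $K_1'\rho^d\le\vol(B_\rho)\le K_2'\rho^d$ with constants depending on the Ricci curvature of $\nn$. You correctly observe that a Ricci lower bound by itself yields only the Bishop--Gromov \emph{upper} bound, while the lower bound comes from G\"unther's inequality under a sectional curvature upper bound (or, equivalently, from compactness of $\nn$ via the small-ball expansion). This is a sharper bookkeeping of hypotheses than the lemma statement records, but since $\nn$ is closed all relevant curvature bounds are available and the conclusion is unaffected.
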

\begin{proof}
Suppose $G_{k,i}$ is the bisector between $\my_k$ and $\my_{k,i}$. Then $d_{\nn}(\Gamma_{k,i}, y_k) \geq d_{\nn}(G_{k,i}, y_k) \geq  \frac{1}{2}D _2 r$. Hence, $d_{\nn}(\partial C_k, y_k) \geq \frac{1}{2}D _2 r$. Therefore, each $C_i$ contains a geodesic ball of radius $\frac{1}{2}D _2 r$ and is contained in the geodesic ball of radius $D _1 r$. By Lemma B.1 in \cite{wu2018think} when $r$ is small enough, the volume {of a} geodesic ball of radius $r$ can be bounded from below by $K'_1 r^d$ and from above by $K'_2 r^d$ where $K'_1$ and $K'_2$ depend on the Ricci curvature of $\nn$. The conclusion follows.
\end{proof}

In the next proposition, we show that $|\tilde{C}_k|$ is a good approximation of $|C_k|$. The proof of the proposition is in the Appendix.

\begin{prop}\label{approximation of the volume of a Voronoi cells}
Let $|\tilde{C}_k|$ be the approximated volume of $C_k$  in \eqref{vol_cell}. {If $n$ is large enough, for $r$ satisfying Assumption \ref{assumption on voronoi cell},} with probability greater than $1-\frac{1}{n^2}$, for all $\my_k$, we have $|\tilde{C}_k|=|\tilde{C}_{k,0}|=|C_k|(1+O(r))$.  
\end{prop}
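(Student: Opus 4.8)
Since $|C_k|\asymp r^d$ by \eqref{volume bounds on the cells}, it suffices to show $|\tilde C_{k,0}|-|C_k|=O(r^{d+1})$, i.e.\ a relative error $O(r)$. The plan is to transport the true geodesic Voronoi cell $C_k$ into the tangent space $T_{\my_k}\nn\cong\bR^d$ via the exponential map, and then to compare the resulting region with the Euclidean Voronoi cell $\tilde C_{k,0}$ of the projected data $\{0,v_{k,1},\dots,v_{k,N_k}\}$ produced by Algorithm~\ref{Voronoi cell on manifold algorithm}. The total error splits into three contributions: (a) the discrepancy between the PCA-estimated frame $\{\beta_{n,r,j}\}_{j=1}^d$ and a true orthonormal basis of $T_{\my_k}\nn$; (b) the metric distortion between the geodesic distance $d_\nn$, the ambient Euclidean distance in $\bR^\ell$, and the flat distance in the model tangent plane; (c) the stability of the Voronoi construction itself under the perturbations produced by (a) and (b).

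For (a), I would invoke the local-PCA consistency behind \cite{wu2018think} (see also \cite{singer2012vector}): using the points in $B^{\mathbb{R}^\ell}_{\sqrt r}(\my_k)$ to form $C_{n,r}(\my_k)$ as in \eqref{def_c38}, the suitably rescaled matrix concentrates around its population counterpart, whose top-$d$ eigenspace is exactly $T_{\my_k}\nn$ with a spectral gap of order $r^2$ separating it from the normal directions. A matrix Bernstein/Hoeffding bound together with the bandwidth hypothesis $nr^{d/2}/\log n\to\infty$ gives, for a single $\my_k$, that the principal angles between $\mathrm{span}\{\beta_{n,r,j}\}$ and $T_{\my_k}\nn$ are $O(r)$ with probability at least $1-n^{-3}$; a union bound over the $n$ centres then yields this simultaneously for all $\my_k$ with probability at least $1-n^{-2}$. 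Consequently, for any $\my\in B^{\mathbb{R}^\ell}_r(\my_k)\cap\nn$, writing $w=\exp_{\my_k}^{-1}(\my)\in T_{\my_k}\nn$ in the true frame, the map value $\tilde\iota_k(\my)$ agrees with $w$ up to $O(r^2)$: the $O(r)$ tilt of the estimated plane displaces a vector of length $O(r)$ by $O(r^2)$, the second fundamental form contributes $O(r^2)$, and the geodesic--chord correction from Lemma~\ref{geodesic vs euclidean} is only $O(r^3)$.

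For (c), I would first note that the combinatorial complexity is bounded: by Assumption~\ref{assumption on voronoi cell}(2) the points $\my_{k,i}$ are $D_2 r$-separated and lie in $B^{\mathbb{R}^\ell}_r(\my_k)$, so $N_k=O(1)$ by a volume/packing argument, while by Assumption~\ref{assumption on voronoi cell}(1) only the bisectors of these $N_k$ points can bound $C_k$ (and, in the projected picture, only those of the $v_{k,i}$ can bound $\tilde C_{k,0}$); in particular the far-away data points invisible to the algorithm are irrelevant. Pushing $C_k$ into normal coordinates turns it into an intersection of $N_k$ geodesic ``half-spaces'' which, because the metric in these coordinates is Euclidean up to an $O(r^2)$ relative error (and, by Proposition~\ref{perpendicular to Voronoi face}, the bisectors are genuine submanifolds meeting the connecting geodesics orthogonally), is an intersection of affine half-spaces whose bounding hyperplanes differ from those defining $\tilde C_{k,0}$ by $O(r^2)$ in offset and $O(r)$ in normal direction. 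An elementary stability estimate for the volume of an intersection of $O(1)$ half-spaces with inradius $\gtrsim r$ and diameter $\lesssim r$ --- perturbing each facet by $O(r^2)$ in position or $O(r)$ in angle changes the volume by $O(r^{d+1})$ --- then gives $|\tilde C_{k,0}|=|C_k|(1+O(r))$. The symmetrization $A_{k\ell}=(\tilde A_{k\ell}+\tilde A_{\ell k})/2$ and the threshold $s$ in Algorithm~\ref{Voronoi cell on manifold algorithm} affect only the face areas $|\tilde\Gamma_{k\ell}|$, not the cell volume in \eqref{vol_cell}, so they do not enter this estimate.

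\textbf{Main obstacle.} The delicate point is the quantitative Voronoi-stability lemma in (c): one must make precise, with explicit dependence on $r$ and on the (bounded) number of facets, how the volume of a polyhedral cell of controlled shape responds to simultaneous small perturbations of all its defining affine functionals, and verify that the passage from the geodesic Voronoi cell to its normal-coordinate model contributes only such perturbations --- in particular that the metric distortion is genuinely second order, so that it is absorbed into the $O(r)$ relative error rather than dominating it. A secondary, more routine obstacle is the uniform-in-$k$ concentration of the local covariance matrices, which is exactly where the bandwidth condition $nr^{d/2}/\log n\to\infty$ and the factor $n^{-2}$ in the probability bound are used.
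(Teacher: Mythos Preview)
Your plan is essentially correct and close to the paper's, but the two arguments are organized differently, and there is one small slip in your reasoning.

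The paper does not detour through normal coordinates or a general half-space perturbation lemma. It works directly with the projection $\tilde\iota_k$ and proves two auxiliary lemmas. The first packages your items (a) and (b) together: $\tilde\iota_k$ restricted to $B^{\mathbb{R}^\ell}_r(\my_k)\cap\nn$ is a $(1+O(r))$ bi-Lipschitz homeomorphism onto its image in $\mathbb{R}^d$; this is exactly the combination of Proposition~\ref{structure of discrete PCA} (the local-PCA concentration you describe, proved in \cite{wu2018think}) with Lemma~\ref{geodesic vs euclidean}. The second lemma replaces your Voronoi stability step: it shows directly that $d_{\mathcal H}^{\mathbb R^d}\bigl(\partial\tilde\iota_k(C_k),\partial\tilde C_{k,0}\bigr)=O(r^2)$. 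The proof is an elementary planar computation: for a boundary point $A$ of one set, the perpendicular offset to the relevant Euclidean bisector of the other is $\frac{(|AB|+|AO|)\,\bigl||AB|-|AO|\bigr|}{2|BO|}$, where $O=0$ and $B=\tilde\iota_k(\my_{k,i})$, and this is $O(r^2)$ by the bi-Lipschitz estimate and the lower bound $|BO|\gtrsim r$. With the Hausdorff bound in hand, convexity of $\tilde C_{k,0}$ alone gives the volume comparison (a convex body in a ball of radius $O(r)$ has surface area $O(r^{d-1})$, so an $O(r^2)$-thickening of its boundary has volume $O(r^{d+1})$), and the bi-Lipschitz map transfers this to $|C_k|$.

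Two remarks on your version. First, your claim that the points $\my_{k,i}$ are $D_2r$-separated misreads Assumption~\ref{assumption on voronoi cell}(2): it only says $d_\nn(\my_{k,i},\my_k)\ge D_2 r$, a lower bound on the distance to the \emph{centre}, not a mutual separation, so no packing argument follows from it. A bound $N_k=O(1)$ does hold with high probability from the hypothesis that $nr^d$ is bounded above together with the upper bound on $\rho^{**}$, but the paper's route never needs it: the Hausdorff-distance lemma is proved point by point regardless of the number of facets, and convexity converts a boundary bound to a volume bound without counting faces. Second, you correctly identify the delicate step: the paper's boundary-comparison lemma is precisely the quantitative Voronoi-stability statement you flag as the main obstacle, and its explicit case-by-case geometry is arguably the cleanest way to discharge it.
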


Since we are approximating the tangent plane of the manifold $\nn$, the error between $|\Gamma_{k i}|$ and $|\tilde{\Gamma}_{k i}|$ will not be much smaller than $|\Gamma_{k i}|$ itself when $|\Gamma_{k i}|$ is too small. However, in the next proposition, we show that if $|\Gamma_{k i}|$ is large enough, then $|\tilde{\Gamma}_{k i }|$ is a good approximation of $|\Gamma_{k i}|$. The proof of the proposition is in the appendix. 

\begin{prop}\label{approximation of the area of a Voronoi face}
Let $|\tilde{\Gamma}_{k i}|$ be the approximated area of $\Gamma_{k i}$  in \eqref{area_cell}. {If $n$ is large enough, for $r$ satisfying Assumption \ref{assumption on voronoi cell},} let $s=a_1 r^d$ in the last step of Algorithm \ref{Voronoi cell on manifold algorithm} for some constant $a_1$,  with probability greater than $1-\frac{1}{n^2}$, for all $\my_k$, we have 
\begin{align}
|\Gamma_{k i}|=|\tilde{\Gamma}_{k i}|+O(r^d).
\end{align}
Hence, if $|\Gamma_{k i}| \geq a_2 r^{d-1}$ for some constant $a_2$,  then
\begin{align}
|\Gamma_{k i}|=|\tilde{\Gamma}_{k i}|(1+O(r))
\end{align}
\end{prop}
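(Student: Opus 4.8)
The plan is to compare, for each base point $\my_k$, the true Voronoi face $\Gamma_{ki}$ on $\nn$ with its Euclidean model $\tilde\Gammaf_{k,m}$ (where $\my_i=\my_{k,m}$ is the $m$-th point in $\my_k$'s list) produced by Algorithm \ref{Voronoi cell on manifold algorithm}, upgrading the $d$-dimensional cell comparison of Proposition \ref{approximation of the volume of a Voronoi cells} to the $(d-1)$-dimensional faces, and then to absorb the truncation at the threshold $s$. First I would record the geometric properties of the projection $\tilde\iota_k$ that are already contained in the proof of Proposition \ref{approximation of the volume of a Voronoi cells}: with probability at least $1-\frac{1}{n^2}$, the span of $\beta_{n,r,1},\dots,\beta_{n,r,d}$ makes an angle $O(r)$ with $T_{\my_k}\nn$ (the bias being $O(r)$ because $C_{n,r}(\my_k)$ averages over a ball of radius $\sqrt r$ with a symmetry cancellation in the cross block, and the sampling fluctuation being controlled since $nr^{d/2}/\log n\to\infty$ under Assumption \ref{assumption on voronoi cell}). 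Combined with Lemma \ref{geodesic vs euclidean} and the quadratic height of $\nn$ over $T_{\my_k}\nn$, this shows that $\tilde\iota_k$ restricted to $B^{\bR^\ell}_r(\my_k)\cap\nn$ is a diffeomorphism onto its image distorting geodesic distances between points that are $\gtrsim r$ apart, and hence $(d-1)$-dimensional Hausdorff measures of subsets, by a factor $1+O(r)$.

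Next I would carry out the face comparison. By Assumption \ref{assumption on voronoi cell}(1), $C_k$ and all its Voronoi neighbours lie in $B^{\bR^\ell}_r(\my_k)$, so both $C_k$ and the Euclidean cell $\tilde C_{k,0}$ are determined only by $\{\my_{k,1},\dots,\my_{k,N_k}\}$ and their projections $\{v_{k,1},\dots,v_{k,N_k}\}$. A point $\my\in B^{\bR^\ell}_r(\my_k)\cap\nn$ lies on $\Gamma_{ki}$ exactly when $d_\nn(\my,\my_k)=d_\nn(\my,\my_i)\le d_\nn(\my,\my_j)$ for all $j$; pushing this condition through $\tilde\iota_k$ and using the $1+O(r)$ distance distortion together with the separation $d_\nn(\my_{k,i},\my_k)\ge D_2 r$ of Assumption \ref{assumption on voronoi cell}(2) (which keeps the gradient of $\|\cdot\|-\|\cdot-v_{k,m}\|$ bounded below near the relevant bisector), it becomes $\bigl|\,\|v\|-\|v-v_{k,m}\|\,\bigr|=O(r^2)$ and $\|v\|\le\|v-v_{k,j}\|+O(r^2)$, so $v=\tilde\iota_k(\my)$ lies within Hausdorff distance $O(r^2)$ of $\tilde\Gammaf_{k,m}$, and conversely every point of $\tilde\Gammaf_{k,m}$ is $O(r^2)$-close to $\tilde\iota_k(\Gamma_{ki})$. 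Since $\Gamma_{ki}$ is a piece of a nearly totally geodesic bisector inside a geodesic ball of radius $O(r)$ (Proposition \ref{perpendicular to Voronoi face}), one has $|\Gamma_{ki}|=O(r^{d-1})$, and so does $\tilde\Gammaf_{k,m}$; hence the $O(r^2)$ boundary perturbation costs only $O(r^{d-2}\cdot r^2)=O(r^d)$ in $(d-1)$-measure, and the $1+O(r)$ metric distortion costs only $O(r^{d-1})\cdot O(r)=O(r^d)$. Therefore $|\Gamma_{ki}|=|\tilde\Gammaf_{k,m}|+O(r^d)=\tilde A_{ki}+O(r^d)$, and averaging with the symmetric estimate from the base point $\my_i$ yields $A_{ki}=|\Gamma_{ki}|+O(r^d)$.

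It remains to insert the threshold. Since $|\tilde\Gamma_{ki}|=\max(A_{ki},s)$ with $s=a_1 r^d$ and $|\Gamma_{ki}|\ge 0$, the estimate $A_{ki}=|\Gamma_{ki}|+O(r^d)$ gives $|\tilde\Gamma_{ki}|=|\Gamma_{ki}|+O(r^d)$ whichever branch of the maximum is active (if $|\tilde\Gamma_{ki}|=s$ then $A_{ki}\le s$, which forces $|\Gamma_{ki}|=O(r^d)$ as well), which is the first assertion. If moreover $|\Gamma_{ki}|\ge a_2 r^{d-1}$, then for $n$ large enough $r$ is small and $A_{ki}\ge a_2 r^{d-1}-O(r^d)>a_1 r^d=s$, so $|\tilde\Gamma_{ki}|=A_{ki}=|\Gamma_{ki}|\bigl(1+O(r^d/r^{d-1})\bigr)=|\Gamma_{ki}|(1+O(r))$, the second assertion. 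All constants depend only on $D_1$, $D_2$, the second fundamental form and the Ricci curvature of $\nn$, and the probability $1-\frac{1}{n^2}$ is inherited from the concentration of $C_{n,r}(\my_k)$ via a union bound over $k=1,\dots,n$.

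I expect the main obstacle to be the face-comparison step, specifically ruling out \emph{spurious} faces: a pair $(\my_k,\my_{k,m})$ with $\Gamma_{ki}=\emptyset$ on $\nn$ could in principle produce a Euclidean face $\tilde\Gammaf_{k,m}$ of non-negligible area. To exclude this I would track the witness point $\my_j$ that separates $\my_k$ from $\my_i$ on $\nn$ through the projection and argue, again via the $O(r^2)$ distortion, that its image $v_{k,j}$ still dominates on all but an $O(r^2)$-collar of $\tilde\Gammaf_{k,m}$, leaving that face of area $O(r^d)$ — consistent with $|\Gamma_{ki}|=0$. The analogous degenerate configurations (a face nearly tangent to a bisector, or meeting $\partial B^{\bR^\ell}_r(\my_k)$) are handled the same way; since all of this closely parallels the volume estimate, I would organize the argument so as to reuse the perturbation lemmas behind Proposition \ref{approximation of the volume of a Voronoi cells} rather than redo them.
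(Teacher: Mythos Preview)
Your proposal is correct and follows essentially the same approach as the paper: project via $\tilde\iota_k$, establish an $O(r^2)$ Hausdorff perturbation between the projected face and the Euclidean face by reusing the case analysis behind Lemma~\ref{boundary comparison lemma}, convert this to an $O(r^d)$ area discrepancy using $|\partial\tilde\Gammaf_{k,m}|=O(r^{d-2})$, and then absorb the threshold $s=a_1r^d$. The only organizational difference is that the paper first projects $\Gamma_{k,i}$ onto the $(d-1)$-plane $S_{k,i}\subset T_{\my^*_{k,i}}\nn$ at the geodesic midpoint to obtain a flat intermediate model $L_{k,i}$, so that the final area comparison is between two planar sets rather than between a curved $\tilde\iota_k(\Gamma_{ki})$ and the flat $\tilde\Gammaf_{k,m}$; this is a technical convenience, not a different argument.
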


At last, if we use our approximation of the volumes of the Voronoi cells and  the areas of the Voronoi faces in \eqref{mp} we have the following implementable  finite volume  scheme based only on the collected dataset $\{\my_i\}\subset\nn$
\begin{equation}\label{mp_pron}
\frac{\ud}{\ud t}\tilde{\rho}_i |\tilde{ C}_i|= \frac12 \sum_{j\in \text{VF}(i)} \frac{\pi_i+ \pi_j}{|y_i-y_j|} |\tilde{\Gamma}_{ij}|\left( \frac{\tilde{\rho}_j}{\pi_j}- \frac{\tilde{\rho}_i}{\pi_i} \right).
\end{equation}
 Moreover, same as Lemma \ref{lem_Markov}, we know
the  finite volume  scheme \eqref{mp_pron} is the forward equation for a Markov Process with transition probability $\tilde{ P}_{ji}$ and jump rate $\tilde{\lmd}_i$
\begin{equation}\label{mp1_pron}
\frac{\ud}{\ud t} \tilde{\rho}_i | \tilde{ C}_i| = \sum_{j\in \text{VF}(i)} \tilde{\lmd}_j \tilde{ P}_{ji} \tilde{\rho}_j |\tilde{ C}_j| - \tilde{\lmd}_i \tilde{\rho}_i |\tilde{ C}_i|,
\end{equation}
where for $i=1,\cdots, n,\, j=1,\cdots, n$,
\begin{equation}\label{definition of delta tilde and P tilde}
\begin{aligned}
&\tilde{\lmd}_i := \frac1{2|\tilde{ C}_i|\pi_i}\sum_{j\in \text{VF}(i)} \frac{\pi_i+ \pi_j}{|y_i-y_j|}|\tilde{\Gamma}_{ij}|, \\
&  \tilde{ P}_{ji}:=\frac{1}{\tilde{\lmd}_j}\frac{\pi_i+ \pi_j}{2\pi_j |\tilde{ C}_j|}\frac{|\tilde{\Gamma}_{ij}|}{|y_i-y_j|}, \quad j\in \text{VF}(i); \quad \tilde{ P}_{ji}=0, \quad j\notin \text{VF}(i).
\end{aligned}
\end{equation}
{ Similar to Lemma \ref{lem_Markov}, we know $\tilde{P}$ is the transition probability matrix with row sum $1$. Denote the   diagonal rate matrix as $\tilde{R}=\text{diag}(\tilde{\lmd}_j)$, then we also obtain an approximated $Q$-matrix $\tilde{Q} = \tilde{R}(\tilde{P}-I)$.}
{ Notice  $\pi_i>0$ for all $i=1, \cdots, n$, so we always have $\tilde{\lmd}_i>0$ for all $i$.}
It also
satisfies the detailed balance property
\begin{equation}\label{db_cfl}
\tilde{\lmd}_j \tilde{ P}_{ji}\pi_j |\tilde{ C}_j| = \tilde{\lmd}_i \tilde{ P}_{ij} \pi_i |\tilde{ C}_i|,
\end{equation}
conservation laws and the stability analysis in Lemma \ref{stable}. 

Now we state and prove the convergence of the implementable finite volume  scheme \eqref{mp_pron}.
The bound of the error in the weighted $\ell^2$ norm is summarized in the following theorem. Due to the estimation error in the Voronoi cells and faces, the error in Theorem \ref{mainthm_conv} $e^T$  is replaced by $e^{2T}$.
Assume for $i=1, \cdots, n$, $|\text{VF}(i)|$, the cardinality of $\text{VF}(i)$, is order $1$.
\begin{thm}\label{mainthm_mp}
Suppose $\rho(\my,t)$, $t\in[0,T]$ is a smooth solution to the Fokker-Planck equation \eqref{FP_N1} on manifold $\nn\subset \bR^\ell$ with initial density $\rho^0(\my)$. Let $\{\tilde{\rho}_i(t)\}_{i=1}^n$ be the solution of the finite volume  scheme \eqref{mp_pron}. Let $\tilde{e}_i:= \rho(\my_i) - \tilde{\rho}_i$. {If $n$ is large enough, for $r$ satisfying Assumption \ref{assumption on voronoi cell}, we choose threshold $s=a_1 r^{d}$ for some constant $a_1$ in Algorithm \eqref{Voronoi cell on manifold algorithm},} with probability greater than $1-\frac{1}{n^2}$, we have 
\begin{equation}
\max_{t\in[0,T]} \sum_i \tilde{e}_i(t)^2 \frac{|C_i|}{\pi_i} \leq \big (\sum_i \tilde{e}_i(0)^2 \frac{|C_i|}{\pi_i} +cr \big)  e^{2T},
\end{equation}
where $c$ is a constant independent of $r$ and $n$.
\end{thm}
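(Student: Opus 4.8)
The plan is to mimic the proof of Theorem \ref{mainthm_conv}, but carry along the extra perturbation coming from replacing the exact Voronoi data $|C_i|, |\Gamma_{ij}|$ by the approximate data $|\tilde C_i|, |\tilde\Gamma_{ij}|$. First I would plug the exact solution $\rho(\my,t)$ into the implementable scheme \eqref{mp_pron}, exactly as in \eqref{mp-exact}, obtaining a relation
\begin{equation*}
\pt_t(\rho_i^e|\tilde C_i|) = \tfrac12\sum_{j\in VF(i)}\frac{\pi_i+\pi_j}{|y_i-y_j|}|\tilde\Gamma_{ij}|\Bigl(\tfrac{\rho(\my_j)}{\pi_j}-\tfrac{\rho(\my_i)}{\pi_i}\Bigr) + \sum_{j\in VF(i)}\tilde\eps_{ij} + \bigl(\pt_t(\rho_i^e|\tilde C_i|) - \pt_t(\rho_i^e|C_i|)\bigr),
\end{equation*}
where $\tilde\eps_{ij}$ is the analogue of $\eps_{ij}$ with $|\Gamma_{ij}|$ replaced by $|\tilde\Gamma_{ij}|$; since $\frac{\pi_i+\pi_j}{|y_i-y_j|}|\tilde\Gamma_{ij}|$ is still symmetric (by the symmetrization step $A_{k\ell}=(\tilde A_{k\ell}+\tilde A_{\ell k})/2$ in Algorithm \ref{Voronoi cell on manifold algorithm} and the floor at $s$), $\tilde\eps_{ij}$ remains anti-symmetric. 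Subtracting \eqref{mp_pron} and testing against $\tilde e_i/\pi_i$ gives, just as in Theorem \ref{mainthm_conv}, an energy identity for $\sum_i \tilde e_i^2 |\tilde C_i|/\pi_i$ whose right-hand side is a negative dissipation term plus (a) $\sum_{i,j}\tilde\eps_{ij}^2/\bigl(\frac{\pi_i+\pi_j}{|y_i-y_j|}|\tilde\Gamma_{ij}|\bigr)$, (b) the time-derivative cell-average error, and (c) a new commutator term coming from $|\tilde C_i|-|C_i|$.

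The second step is to estimate these three contributions using the approximation results already established. For (a), write $\tilde\eps_{ij} = \eps_{ij} + \bigl(\int_{\Gamma_{ij}} - \text{face-quadrature with }|\tilde\Gamma_{ij}|\bigr)$-type difference; by Proposition \ref{approximation of the area of a Voronoi face} we have $|\tilde\Gamma_{ij}| = |\Gamma_{ij}| + O(r^d)$ and, when $|\Gamma_{ij}|\geq a_2 r^{d-1}$, the sharper $|\tilde\Gamma_{ij}| = |\Gamma_{ij}|(1+O(r))$, while $|y_i-y_j|\sim d_\nn(\my_i,\my_j)\gtrsim D_2 r$ and $|\tilde C_i|=|C_i|(1+O(r))$ with $|C_i|\sim r^d$ by \eqref{volume bounds on the cells}. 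Combining \eqref{bound on epsilon 1}, the denominator bound $\frac{\pi_i+\pi_j}{|y_i-y_j|}|\tilde\Gamma_{ij}|\gtrsim r^{d-2}$ (for the non-negligible faces; the faces floored at $s=a_1 r^d$ contribute $O(r^d)$ each and there are $O(1)$ of them), and $h=O(r)$, one gets $\sum_{i,j}\tilde\eps_{ij}^2/(\cdot) = O(nr^{d+1}\cdot r^{-(d-2)}\cdot \max_i|\partial C_i|)$; since $|\partial C_i|=O(r^{d-1})$ and $nr^d=O(1)$ this is $O(r)$. For (b), $\pt_t(\rho(\my_i)-\rho_i^e)=O(h)=O(r)$ and $\sum_i|\tilde C_i|/\pi_i = \vol(\nn)(1+O(r))$, giving $O(r^2)$. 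For (c), $\pt_t(\rho_i^e(|\tilde C_i|-|C_i|)) = O(r)\cdot |C_i|$, and after testing against $\tilde e_i/\pi_i$ and Young's inequality this term is absorbed into $\sum_i \tilde e_i^2|C_i|/\pi_i$ plus an $O(r^2)$ remainder. Finally I would note $|\tilde C_i|=|C_i|(1+O(r))$ lets one replace the weighted norm $\sum_i \tilde e_i^2|\tilde C_i|/\pi_i$ by $\sum_i \tilde e_i^2|C_i|/\pi_i$ up to a multiplicative $1+O(r)$ factor, which is where the extra $e^T$ factor (promoting $e^T$ to $e^{2T}$) enters via Grönwall.

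Assembling, the differential inequality reads $\frac{\ud}{\ud t}\sum_i \tilde e_i^2 \frac{|C_i|}{\pi_i} \leq (1+O(r))\sum_i \tilde e_i^2\frac{|C_i|}{\pi_i} + cr$, and Grönwall over $[0,T]$ with the $(1+O(r))$ constant $\leq 2$ for $n$ large yields $\max_{t\in[0,T]}\sum_i \tilde e_i(t)^2\frac{|C_i|}{\pi_i}\leq \bigl(\sum_i \tilde e_i(0)^2\frac{|C_i|}{\pi_i}+cr\bigr)e^{2T}$, which is the claim; the probability $1-n^{-2}$ is inherited from Propositions \ref{approximation of the volume of a Voronoi cells} and \ref{approximation of the area of a Voronoi face}. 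I expect the main obstacle to be the careful bookkeeping in step (a): one must separate the Voronoi faces into the "geometrically genuine" ones (where $|\Gamma_{ij}|\gtrsim r^{d-1}$ and the relative error $O(r)$ estimate applies) and the "small/floored" ones (where only the absolute $O(r^d)$ estimate holds and the floor $s=a_1 r^d$ is used), and check that in both regimes the ratio $\tilde\eps_{ij}^2/\bigl(\frac{\pi_i+\pi_j}{|y_i-y_j|}|\tilde\Gamma_{ij}|\bigr)$ together with the counting $nr^d=O(1)$, $|VF(i)|=O(1)$ produces only $O(r)$ after summation — this is the only place where the scaling of $r$ versus $n$ in Assumption \ref{assumption on voronoi cell} is genuinely used, and a naive bound would lose a power of $r$.
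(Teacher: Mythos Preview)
Your approach is essentially the same as the paper's and would go through, but two bookkeeping choices in the paper are cleaner than what you propose and deserve comment.

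First, the paper works directly with the $|C_i|$-weighted norm rather than the $|\tilde C_i|$-weighted one: it keeps $\pt_t(\rho_i^e|C_i|)$ on the left (with the \emph{exact} cell volume), so that subtracting \eqref{mp_pron} produces an explicit additive term $\tfrac{\ud}{\ud t}\tilde\rho_i(|\tilde C_i|-|C_i|)$ in the error equation. Testing against $\tilde e_i/\pi_i$ and applying Young's inequality to \emph{both} this volume-error term and the cell-average term (your (b)) is what yields the coefficient $2$ in front of $\sum_i \tilde e_i^2 |C_i|/\pi_i$, hence the $e^{2T}$ after Gr\"onwall. Your stated final inequality with coefficient $1+O(r)$ undercounts: you already absorb (c) via Young (one copy) and (b) via Young (another copy), so you in fact get $2+O(r)$, not $1+O(r)$; the norm-conversion step is not where the factor~$2$ comes from.

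Second, the case distinction you anticipate in (a) between ``genuine'' faces ($|\Gamma_{ij}|\gtrsim r^{d-1}$) and ``floored'' faces is unnecessary. The paper uses the floor $|\tilde\Gamma_{ij}|\ge s=a_1 r^d$ \emph{uniformly} in the denominator: with $|\Gamma_{ij}|\le K r^{d-1}$, $|\Gamma_{ij}|-|\tilde\Gamma_{ij}|=O(r^d)$, and $d_\nn(\my_i,\my_j)\sim r$, each term $\eps_{ij}^2\big/\bigl(\tfrac{\pi_i+\pi_j}{2|y_i-y_j|}|\tilde\Gamma_{ij}|\bigr)$ is $O(r^{d+1})$, and summing over $n\cdot\max_i|VF(i)|$ pairs with $nr^d=O(1)$ gives $O(r)$ directly---no regime splitting and no risk of losing a power of~$r$.
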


\begin{proof}

Define $\rho_i^e:=\frac{1}{|C_i|}\int_{C_i} \rho \ud y$.
Plug the exact solution into the numerical scheme
\begin{equation}\label{mp-exact 2}
\begin{aligned}
\pt_t (\rho_i^e |C_i|) = & \sum_{j\in \text{VF}(i)} \tsch \left(  \frac{\rho(\my_j) }{\pi_j}- \frac{\rho(\my_i)}{\pi_i}\right) \\
+& \sum_{j\in \text{VF}(i)} \int_{{\Gamma}_{ij}}  \pi \mathbf{n}_{ij} \cdot \nabla_\nn \frac{\rho}{\pi} \ud \hs^{d-1}  ~ -\sum_{j\in \text{VF}(i)} ~ \tsch \left(  \frac{\rho(\my_j) }{\pi_j}- \frac{\rho(\my_i) }{\pi_i}\right),
\end{aligned}
\end{equation}
where $\mathbf{n}_{ij}$ is the restriction of the unit outward normal vector field on $\Gamma_{ij}$. 
Subtracting the numerical scheme \eqref{mp_pron} from \eqref{mp-exact 2}, 
we have
\begin{equation}
\begin{aligned}
\frac{\ud}{\ud t}\tilde{e}_i |C_i| =&\sum_{j\in \text{VF}(i)} \frac{\pi_i+ \pi_j}{2|y_i-y_j|} |{\tilde{\Gamma}}_{ij}|\left( \frac{\tilde{e}_j}{\pi_j}- \frac{\tilde{e}_i}{\pi_i} \right) + \sum_{j\in \text{VF}(i)} \eps_{ij} +   \pt_t ( (\rho(\my_i) -\rho_i^e) |C_i|)+\frac{\ud}{\ud t} \tilde{\rho}_i(|\tilde{C}_i|-|C_i|),
\end{aligned}
\end{equation}
where
\begin{align}
\eps_{ij}:= \int_{\Gamma_{ij}}  \pi \mathbf{n}_{ij} \cdot \nabla_\nn \frac{\rho}{\pi} \ud \hs^{d-1}  ~ -& ~ \sch \left(  \frac{\rho(\my_j) }{\pi_j}- \frac{\rho(\my_i) }{\pi_i}\right)\\
&+\frac{\pi_i+ \pi_j}{2|y_i-y_j|} (|{\Gamma}_{ij}|-|{\tilde{\Gamma}}_{ij}|)\left( \frac{\rho(\my_j)}{\pi_j}- \frac{\rho(\my_i)}{\pi_i} \right)\nonumber
\end{align}
Note that  $\eps_{ij}$ is anti-symmetric, hence by the same argument in Theorem \ref{mainthm_conv}, we have 
\begin{align}
 \frac{\ud}{\ud t} \sum_i \tilde{e}_i^2\frac{|C_i|}{\pi_i} \leq & -\frac12\sum_i \sum_{j\in \text{VF}(i)}  \frac{\pi_i+ \pi_j}{2|y_i-y_j|} |{\tilde{\Gamma}}_{ij}|    \left(  \frac{\tilde{e}_j}{\pi_j}- \frac{\tilde{e}_i}{\pi_i}\right)^2 + \frac12 \sum_{i} \sum_{j\in \text{VF}(i)} \frac{\eps^2_{ij}}{\frac{\pi_i+ \pi_j}{2|y_i-y_j|} |{\tilde{\Gamma}}_{ij}|} \\
 & +\sum_i  [\pt_t(\rho(\my_i) -\rho_i^e)]^2  \frac{ |C_i|}{\pi_i}  + \sum_i  \big(\frac{\ud}{\ud t} \tilde{\rho}_i(\frac{|\tilde{C}_i|-|C_i|}{|C_i|}) \big)^2   \frac{ |C_i|}{\pi_i} + 2 \sum_i  e_i^2  \frac{ |C_i|}{\pi_i} \nonumber \\
\leq & \sum_{i} \sum_{j\in \text{VF}(i)} \frac{\eps^2_{ij}}{\frac{\pi_i+ \pi_j}{2|y_i-y_j|} |{\tilde{\Gamma}}_{ij}|} +\sum_i  [\pt_t(\rho(\my_i) -\rho_i^e)]^2  \frac{ |C_i|}{\pi_i}  \nonumber \\
& + \sum_i  \big(\frac{\ud}{\ud t} \tilde{\rho}_i(\frac{|\tilde{C}_i|-|C_i|}{|C_i|}) \big)^2   \frac{ |C_i|}{\pi_i} + 2 \sum_i  e_i^2  \frac{ |C_i|}{\pi_i} \nonumber\\
=&: \epsilon_1+\epsilon_2+ \epsilon_3+ 2 \sum_i  e_i^2  \frac{ |C_i|}{\pi_i}.  \nonumber
\end{align}
First, we estimate the term $\epsilon_1$, in particular, $ \frac{\eps^2_{ij}}{\frac{\pi_i+ \pi_j}{2|y_i-y_j|} |{\tilde{\Gamma}}_{ij}|}$ for $j\in \text{VF}(i)$. Since the exact solution is smooth such that
\begin{align}
|\rho(\my_i,t)-\rho(\my_j,t)| \leq C_{Lip} |\my_i-\my_j|,
\end{align}  by \eqref{bound on epsilon 1},
\begin{align}
{\eps_{ij}}=O((\diam(C_i)+d_{\nn}(\my_i,\my_j)) |\Gamma_{ij}|)+O(|{\Gamma}_{ij}|-|{\tilde{\Gamma}}_{ij}|). 
\end{align}
Hence,
\begin{align}
 \frac{\eps^2_{ij}}{\frac{\pi_i+ \pi_j}{2|y_i-y_j|} |{\tilde{\Gamma}}_{ij}|}=O(d_{\nn}(\my_i,\my_j)(\diam(C_i)+d_{\nn}(\my_i,\my_j))^2 \frac{ |\Gamma_{ij}|^2}{|{\tilde{\Gamma}}_{ij}|})+O(d_{\nn}(\my_i,\my_j) \frac{(|{\Gamma}_{ij}|-|{\tilde{\Gamma}}_{ij}|)^2}{|{\tilde{\Gamma}}_{ij}|}). 
\end{align}
Note that $|{\tilde{\Gamma}}_{ij}|\geq s= a_1 r^d$. Hence, by Proposition \ref{approximation of the area of a Voronoi face},
\begin{equation}
 \frac{(|{\Gamma}_{ij}|-|{\tilde{\Gamma}}_{ij}|)^2}{|{\tilde{\Gamma}}_{ij}|}=O(r^{d}).
\end{equation}

By Assumption \ref{assumption on voronoi cell} and Lemma \ref{geodesic vs euclidean}, $d_{\nn}(\my_i,\my_j) $ and $\diam(C_i)$ are of order $r$. {By Assumption \ref{assumption on voronoi cell} and Proposition \ref{perpendicular to Voronoi face}}, since $\nn$ is compact, there is a constant $K$ such that $|\Gamma_{ij}| \leq K r^{d-1}$.  Therefore, $\frac{\eps^2_{ij}}{\frac{\pi_i+ \pi_j}{2|y_i-y_j|} |{\tilde{\Gamma}}_{ij}|}=O(r^{d+1})$ and
\begin{align}
\epsilon_1=\sum_{i} \sum_{j\in \text{VF}(i)} \frac{\eps^2_{ij}}{\frac{\pi_i+ \pi_j}{2|y_i-y_j|} |{\tilde{\Gamma}}_{ij}|}=O(n r^{d+1} \max_{i}|\text{VF}(i)| )=O(r \max_{i}|\text{VF}(i)|),
\end{align}
where we use $n r^d$ goes to some constant in the last step.

Second, we estimate $\epsilon_2+\epsilon_3$. By Proposition \ref{approximation of the volume of a Voronoi cells} and  \eqref{volume bounds on the cells},
\begin{align}
\sum_i  \big(\frac{\ud}{\ud t} \tilde{\rho}_i(\frac{|\tilde{C}_i|-|C_i|}{|C_i|}) \big)^2   \frac{ |C_i|}{\pi_i} =O(r^2).
\end{align}
By \eqref{bound on epsilon 2} and Assumption \ref{assumption on voronoi cell},
\begin{align}
\sum_i  [\pt_t(\rho(\my_i) -\rho_i^e)]^2  \frac{ |C_i|}{\pi_i} =O(r^2).
\end{align}

We sum up all the terms, 
\begin{align}
 \frac{\ud}{\ud t} \sum_i \tilde{e}_i^2\frac{|C_i|}{\pi_i} \leq O(r \max_{i}|\text{VF}(i)|) + 2 \sum_i  e_i^2  \frac{ |C_i|}{\pi_i}.
\end{align}
In conclusion
\begin{align}
\max_{t\in[0,T]} \sum_i \tilde{e}_i(t)^2 \frac{|C_i|}{\pi_i} \leq \big (\sum_i \tilde{e}_i(0)^2 \frac{|C_i|}{\pi_i} +O(r \max_{i}|\text{VF}(i)| ) \big)  e^{2T}.
\end{align}
\end{proof}

\subsection{{Unconditionally} stable explicit time stepping and exponential convergence}\label{sec_explicit}
 To the end of this section, we show that the  detailed balance property \eqref{db_cfl} leads to stability and exponential convergence of a discrete-in-time Markov process.
 
{Under the detailed balance condition \eqref{db_cfl}, we recast \eqref{mp1_pron} to
\begin{equation}
\frac{\ud}{\ud t} \frac{\tilde{\rho}_i}{\pi_i} = \sum_{j\in \text{VF}(i)} \tilde{\lmd}_i \tilde{ P}_{ij} \frac{\tilde{\rho}_j}{\pi_j} - \tilde{\lmd}_i \frac{\tilde{\rho}_i}{\pi_i},
\end{equation}
}
Let $\rho^{k}_i $ be the discrete density at the discrete time $k\Delta t$. To achieve both the stability and the efficiency, we introduce the following unconditional stable explicit scheme 
\begin{equation}\label{559semi}
\frac{\rho_i^{k+1}}{\pi_i} = \frac{\rho^k_i}{\pi_i}-\tilde{\lmd}_i \Delta t \frac{\rho^{k+1}_i}{\pi_i}  +  \Delta t\sum_{j\in \text{VF}(i)} \tilde{\lmd}_i \tilde{ P}_{ij}   \frac{\rho^{k}_j }{\pi_j},
\end{equation}
where $\tilde{\lmd}_i$ and $\tilde{ P}_{ij}$ are defined in \eqref{definition of delta tilde and P tilde}.
The above equation is equivalent to 
\begin{equation}\label{semi_1}
\frac{\rho_i^{k+1}}{\pi_i} =\frac{\rho^k_i}{\pi_i} +  \frac{ \tilde{\lmd}_i  \Delta t }{1+\tilde{\lmd}_i \Delta t} \left(  \sum_{j\in \text{VF}(i)} \tilde{ P}_{ij}   \frac{\rho^{k}_j }{\pi_j} -\frac{\rho^k_i}{\pi_i} \right).
\end{equation}
For $u_i^{k+1}:=\frac{\rho_i^{k+1}}{\pi_i}$,  the matrix formulation of \eqref{semi_1} is
\begin{equation}\label{matrix_semi}
u^{k+1} = (I+\Delta t Q) u^k,
\end{equation}
where 
\begin{equation}\label{BBn}
Q:= \{\hat{b}_{ij}\}= \left\{\begin{array}{cc}
-\frac{ \tilde{\lmd}_i  \ }{1+\tilde{\lmd}_i \Delta t}  , \quad &j=i;\\
\frac{ \tilde{\lmd}_i   }{1+\tilde{\lmd}_i \Delta t} \tilde{P}_{ij}, \quad &j\neq i
\end{array}\right.
\end{equation}
satisfies $\sum_j \hat{b}_{ij}=0$. 

{
Below, we first summarize the explicit time stepping \eqref{559semi} as an algorithm and then prove the unconditionally stability.

\vspace{0.4cm}
\begin{algorithm}[H] \label{explicit FP algorithm}
\SetAlgoLined
\Parameter{Algorithm inputs: error tolerance $\epsilon$,  time step $\Delta t$, the initial distribution $(\rho^0_i)$, the target invariant measure $(\pi_i)$, the approximated volume $|\tilde{C}_k|$ of Voronoi cell  and ares $|\tilde{\Gamma_{k\ell}}|$ }

Compute transition probability matrix  $\tilde{P}_{ij}$ and $\tilde{\lmd}$ defined in \eqref{definition of delta tilde and P tilde}.

Compute discrete time transition probability matrix $Q$ defined in \eqref{BBn}.

$k\to k+1$ iteration: $\frac{\rho^{k+1}}{\pi} = (I+\Delta t Q) \frac{\rho^k}{\pi}$. Repeat until $\|\frac{\rho^{k+1}}{\pi}-1\|_{\8} < \epsilon.$

\caption{Explicit time stepping for Markov process}
\end{algorithm}
\vspace{0.3cm}
}

Now we show  $Q$ defined in \eqref{BBn} is the generator of a new Markov process.

For $w_i^{k+1}:=\rho_i^{k+1} |\tilde{C}_i|$, \eqref{559semi}, together with detailed balance property \eqref{db_cfl}, yields
\begin{equation}\label{tm567}
\rho_i^{k+1}|\tilde{C}_i| - \rho_{i}^k |\tilde{C}_i| = \Delta t \left( \sum_{j\in \text{VF}(i)} \tilde{\lmd}_j \tilde{ P}_{ji} \rho_j^{k} |\tilde{ C}_j| - \tilde{\lmd}_i \rho_i^{k+1} |\tilde{ C}_i|   \right),
\end{equation}
which can be recast as
\begin{equation}\label{tm568}
(1 + \Delta t \tilde{\lmd}_i ) \rho_i^{k+1}|\tilde{C}_i|  = (1+\Delta t \tilde{\lmd}_i) \rho_{i}^k|\tilde{C}_i| +\Delta t\left( \sum_{j\in \text{VF}(i)} \tilde{\lmd}_j \tilde{ P}_{ji} \rho_j^{k} |\tilde{ C}_j|  - \tilde{\lmd}_i|\tilde{C}_i| \rho_{i}^k  \right).
\end{equation}
Denote $g_i^{k+1}:= (1 + \Delta t \tilde{\lmd}_i ) \rho_i^{k+1}|\tilde{C}_i|$. \eqref{tm568} can be simplified as
\begin{equation}
g_{i}^{k+1} = g_i^k + \Delta t \left( \sum_j \frac{\tilde{\lmd}_j }{1+\Delta t \tilde{\lmd}_j}  \tilde{ P}_{ji} g_j^k -  \frac{\tilde{\lmd}_i }{1+\Delta t \tilde{\lmd}_i} g_i^k   \right).
\end{equation}
This is a new Markov process for $g_i$ with  transition probability $\tilde{P}_{ji}$ and a new jump rate $s_j=\frac{\tilde{\lmd}_j }{1+\Delta t \tilde{\lmd}_j}$.
With $Q$ in \eqref{BBn}, the matrix formulation for $g$ is
\begin{equation}
g^{k+1}= (I+\Delta t Q)^* g^k.
\end{equation}
One can check $(1 + \Delta t \tilde{\lmd}_i ) \pi_i|\tilde{C}_i|$ is a new equilibrium.

\begin{prop}\label{error bound for the unconditional stable explicit scheme}
{ Assume $\pi_i>0$ for all $i=1, \cdots, n$.} {Let $\tilde{\lmd}_i$ be the approximated jump rate and $\tilde{P}_{ij}$ be the approximated transition probability defined in \eqref{definition of delta tilde and P tilde}.} Let $\Delta t$ be the time step and consider the explicit time stepping \eqref{559semi}, { i.e., Algorithm \ref{explicit FP algorithm}}. Assume the initial data satisfies 
\begin{equation}\label{adjust}
\sum_i (1+ \tilde{\lmd}_i \Delta t) \rho_i^0 |\tilde{C}_i| = \sum_i (1+ \tilde{\lmd}_i \Delta t )\pi_i |\tilde{C}_i|.
\end{equation}
Then
we have
\begin{enumerate}[(i)]
\item the {conversation} law for $g_i^{k+1}:= (1 + \Delta t \tilde{\lmd}_i ) \rho_i^{k+1}|\tilde{C}_i|$, i.e.
\begin{equation}\label{conser1}
\sum_i (1+ \lmd_i \Delta t) \rho_i^{k+1} |\tilde{C}_i| = \sum_i (1+ \lmd_i \Delta t )\rho_i^{k} |\tilde{C}_i|;
\end{equation}
\item  the unconditional maximum principle for $\frac{\rho_i}{\pi_i}$
\begin{equation}\label{maxP}
\max_i \frac{\rho^{k+1}_j }{\pi_j}\leq \max_j \frac{\rho^{k}_j }{\pi_j}.
\end{equation}
\item the $\ell^\8$ contraction
\begin{equation}\label{l8semi}
\max_i \left| \frac{\rho^{k+1}_i}{\pi_i}-1\right| \leq \max_i \left| \frac{\rho^{k}_i}{\pi_i}-1\right| ;
\end{equation}
\item the exponential convergence
\begin{equation}\label{gap_error}
\left\| \frac{\rho^{k}_i}{\pi_i}-1\right\|_{\ell^\8} \leq c  |\lambda_2|^k, \quad |\lambda_2|<1,
\end{equation}
where $\lambda_2$ is the second eigenvalue (in terms of the magnitude) of $I+\Delta t Q$, i.e. $\lambda_2=1-\gap_{Q}\Delta t$ and $\gap_{Q}$ is the spectral gap of $Q$. 
\end{enumerate}
\end{prop}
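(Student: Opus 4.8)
The plan is to exploit two structural facts: the convex-combination form \eqref{semi_1} of the update, which handles (i)--(iii) unconditionally in $\Delta t$, and the fact that the detailed balance \eqref{db_cfl} makes the updated generator self-adjoint in a suitable weighted inner product, which handles (iv). Throughout I set $u_i^k:=\rho_i^k/\pi_i$ and $\theta_i:=\tilde\lambda_i\Delta t/(1+\tilde\lambda_i\Delta t)$, so that \eqref{semi_1} reads $u_i^{k+1}=(1-\theta_i)u_i^k+\theta_i\sum_{j\in VF(i)}\tilde P_{ij}u_j^k$.

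First, since $\tilde\lambda_i>0$ gives $\theta_i\in(0,1)$ for \emph{every} $\Delta t>0$, and since $\tilde P_{ij}\ge 0$ with $\sum_j\tilde P_{ij}=1$, the right-hand side above is a convex combination of the numbers $\{u_j^k\}$; hence (ii) is immediate, $u_i^{k+1}\le\max_j u_j^k$ for each $i$. Applying the same recursion to $u^k-1$ and to $1-u^k$ (which also solve the scheme, because the scheme is linear and the constant vector is a fixed point) yields $\max_i(u_i^{k+1}-1)\le\max_j(u_j^k-1)$ together with $\min_i(u_i^{k+1}-1)\ge\min_j(u_j^k-1)$, and combining these gives the $\ell^\infty$-contraction (iii). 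For (i) I would pass to $g_i^k:=(1+\Delta t\tilde\lambda_i)\rho_i^k|\tilde C_i|$, which by \eqref{tm567}--\eqref{tm568} evolves by the Markov update $g_i^{k+1}=g_i^k+\Delta t\bigl(\sum_j s_j\tilde P_{ji}g_j^k-s_i g_i^k\bigr)$ with $s_j=\tilde\lambda_j/(1+\Delta t\tilde\lambda_j)$; summing over $i$ and using $\sum_i\tilde P_{ji}=1$ annihilates the right-hand side, so $\sum_i g_i^{k+1}=\sum_i g_i^k$, which is exactly \eqref{conser1}.

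For (iv), write $u^{k+1}=(I+\Delta t Q)u^k$ with $Q$ from \eqref{BBn}. I would first observe that $I+\Delta t Q$ is a genuine transition matrix: its off-diagonal entries $\Delta t\,s_i\tilde P_{ij}$ are nonnegative, its diagonal entries $1-\Delta t s_i$ are \emph{positive} because $\Delta t s_i=\Delta t\tilde\lambda_i/(1+\Delta t\tilde\lambda_i)<1$ for all $\Delta t$ (this is where ``unconditional'' enters), and its rows sum to $1$ since $\sum_j\hat b_{ij}=0$. When $\nn$ is strongly connected, $I+\Delta t Q$ is irreducible, and its positive diagonal makes it aperiodic, so Perron--Frobenius gives that $1$ is a simple eigenvalue with eigenvector $\mathbf 1=(1,\dots,1)$ and every other eigenvalue has modulus $<1$. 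Next, the detailed balance \eqref{db_cfl} shows that $Q$ is self-adjoint for the weighted inner product $\langle v,w\rangle_m:=\sum_i v_iw_i m_i$ with $m_i:=(1+\Delta t\tilde\lambda_i)\pi_i|\tilde C_i|$, hence its eigenvalues are real and the $m$-orthogonal complement $\mathbf 1^{\perp_m}$ is invariant under $I+\Delta t Q$. The assumption \eqref{adjust} is precisely $\langle u^0-1,\mathbf 1\rangle_m=0$, so $u^k-1\in\mathbf 1^{\perp_m}$ for all $k$, and on that subspace $\|I+\Delta t Q\|_m=|\mu_2|<1$, where $\mu_2$ is the second eigenvalue in magnitude; since the nonzero eigenvalue of $Q$ of smallest magnitude equals $-\gap_Q$, one gets $\mu_2=1-\gap_Q\Delta t$. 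Therefore $\|u^k-1\|_m\le|\mu_2|^k\|u^0-1\|_m$, and converting between the (finite-dimensional) $m$-weighted $\ell^2$ norm and the $\ell^\infty$ norm yields $\|u^k-1\|_{\ell^\infty}\le c|\mu_2|^k$, which is \eqref{gap_error}.

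The bookkeeping for (i)--(iii) is routine once the convex-combination form is in hand. The one place needing genuine care is (iv): one must check simultaneously that $I+\Delta t Q$ is a nonnegative, aperiodic, row-stochastic, irreducible matrix --- so that Perron--Frobenius applies and the spectral radius is exactly $1$ with all remaining eigenvalues strictly inside the unit disk, \emph{uniformly} in $\Delta t$ --- and that it is self-adjoint in the $m$-weighted inner product, so that the decay on $\mathbf 1^{\perp_m}$ is governed by the real spectral gap rather than by a larger non-normal quantity. A secondary point worth verifying is that the eigenvalue $1-\gap_Q\Delta t$ is indeed the one realizing $|\mu_2|$ (as opposed to an eigenvalue of $Q$ far out in the negative direction), and that self-adjointness rules out any Jordan block so that the exponential estimate is clean.
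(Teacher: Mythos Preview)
Your proposal is correct and follows essentially the same route as the paper: the convex-combination form \eqref{semi_1} for (ii) and (iii), the summation of the $g_i$-update using $\sum_i\tilde P_{ji}=1$ for (i), and for (iv) Perron--Frobenius for the primitive stochastic matrix $I+\Delta t Q$ together with self-adjointness in the weighted inner product $\langle\cdot,\cdot\rangle_m$ with $m_i=(1+\Delta t\tilde\lambda_i)\pi_i|\tilde C_i|$, followed by an eigenbasis/norm argument on $\mathbf 1^{\perp_m}$. Your explicit mention of aperiodicity via the positive diagonal and your caveat about whether $1-\gap_Q\Delta t$ truly realizes $|\mu_2|$ are both well placed; the paper handles primitivity by noting that $(I+\Delta t Q)^m$ has all entries positive for some $m$, and it simply asserts the identification $\mu_2=1-\gap_Q\Delta t$ without further comment.
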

\begin{proof}
First, recast \eqref{semi_1} as
\begin{equation}\label{tm547}
\frac{\rho_i^{n+1}}{\pi_i} =\frac{ 1}{1+\tilde{\lmd}_i \Delta t}\frac{\rho^n_i}{\pi_i} +  \frac{ \tilde{\lmd}_i  \Delta t }{1+\tilde{\lmd}_i \Delta t} \left(  \sum_{j\in \text{VF}(i)} \tilde{ P}_{ij}   \frac{\rho^{n}_j }{\pi_j}  \right),
\end{equation}
which gives the unconditional  maximum principle
\eqref{maxP}.

Second, from \eqref{tm547}, we have
\begin{equation}
\frac{\rho_i^{k+1}}{\pi_i}-1 =\frac{ 1}{1+\tilde{\lmd}_i \Delta t}\left( \frac{\rho^k_i}{\pi_i}-1 \right) +  \frac{ \tilde{\lmd}_i  \Delta t }{1+\tilde{\lmd}_i \Delta t} \sum_{j\in \text{VF}(i)} \tilde{ P}_{ij}  \left(   \frac{\rho^{k}_j }{\pi_j} -1 \right).
\end{equation}
Then we have
\begin{equation}
\begin{aligned}
\left|\frac{\rho_i^{k+1}}{\pi_i}-1\right| \leq \frac{ 1}{1+\tilde{\lmd}_i \Delta t}\left| \frac{\rho^k_i}{\pi_i}-1 \right|  +  \frac{ \tilde{\lmd}_i  \Delta t }{1+\tilde{\lmd}_i \Delta t} \sum_{j\in \text{VF}(i)} \tilde{ P}_{ij}   \left|\frac{\rho^k_j }{\pi_j} -1 \right|
\leq \max_i  \left|\frac{\rho^n_j }{\pi_j} -1 \right|,
\end{aligned}
\end{equation}
which gives \eqref{l8semi}.

Third, recall the matrix formulation \eqref{matrix_semi}.
Every element in $(I+\Delta t Q)^m$ is strictly positive for some $m$.  By Perron-Frobenius theorem, $\lambda_1=1$ is the 
simple, principal eigenvalue of $I+\Delta t Q$ with the ground state $u^*\equiv \{1, 1, \cdots, 1\}$ and other  eigenvalues $\lambda_i$  satisfy $|\lambda_i|<\lambda_1$. 
On one hand,  the mass conservation for initial data $u^0=\frac{\rho^0}{\pi}$ satisfies \eqref{conser1}, i.e.,
\begin{equation}
\sum_i( u^0_i - u^*_i) u^*_i (1+\Delta t \lmd_i)\pi|\tilde{C}|_i =0.
\end{equation}
On the other hand, $I+\Delta t Q$ is self-adjoint operator in the weighted  $l^2((1+\Delta t\lambda)\pi|C|)$ space,  we can express $u^0$ using 
\begin{equation}
u^0 - u^* = \sum_{j=2} c_j u_j , \quad u_j  \text{ is the eigenfunction corresponding to } \lambda_j.
\end{equation}
 Therefore, we have
\begin{equation}
u^{k}-u^* = (I+\Delta t Q)^k (u^0 - u^*) = \sum_{j=2} c_j \lambda_j^k u_j,
\end{equation}
which concludes
\begin{equation}
\left\| \frac{\rho^{k}_i}{\pi_i}-1\right\|_{\ell^\8} \leq c  |\lambda_2|^k\quad \text{ with } |\lambda_2|<1.
\end{equation}
Here $\lambda_2$ is the second eigenvalue (in terms of the magnitude) of $I+\Delta t Q$ sitting in the ball with radius $\lambda_1=1$ and thus $|\lambda_2|<1$.

Finally, taking summation with respect to $i$ in \eqref{tm567} shows
\begin{equation}
\begin{aligned}
\sum_i \left(\rho_i^{k+1}|\tilde{C}_i| - \rho_{i}^k |\tilde{C}_i| \right)=& \Delta t \left( \sum_{i,j} \tilde{\lmd}_j \tilde{ P}_{ji} \rho_j^{k} |\tilde{ C}_j| - \sum_i \tilde{\lmd}_i \rho_i^{k+1} |\tilde{ C}_i|   \right)\\
=& \Delta t \left( \sum_{j} \tilde{\lmd}_j  \rho_j^{k} |\tilde{ C}_j| - \sum_i \tilde{\lmd}_i \rho_i^{k+1} |\tilde{ C}_i|   \right),
\end{aligned}
\end{equation}
which gives \eqref{conser1}.

\end{proof}

{As a comparison, we also give some other standard stability estimates for both explicit and implicit schemes in Appendix \ref{otherS} and show that only the unconditional stable explicit scheme \eqref{559semi} achieves both the efficiency and  the stability.}
{We  refer to \cite{gao2020inbetweening} for successful applications of Algorithm \ref{explicit FP algorithm} to image morphing problems with 2D structured spacial grids. With structured grids, instead of Voronoi cell approximations obtained from sample points, the computations of explicit time stepping for Markov process using Algorithm \ref{explicit FP algorithm} are more accurate. \cite{gao2020inbetweening} also combines Algorithm \ref{explicit FP algorithm} with a thresholding dynamics to simulate mass-conserved shape dynamics for distribution with binary values ${1,2}$. }

\section{Simulations for Fokker-Planck solver}\label{sec_simu}
In this section, we conduct some challenging numerical simulations with reaction coordinates for the dumbbell, the Klein bottle and sphere.   We use the dataset $\{\my_i\}_{i=1}^{2000}$ with the reaction coordinates on the underlying manifolds including  dumbbell, Klein bottle and sphere to solve the Fokker-Planck equation \eqref{FP_N1} following the unconditionally stable explicit scheme \eqref{559semi}.

{ 
\subsection{Comparison with a ground-truth dynamics on sphere}
In this section, we construct a ground-truth exact solution given by an oscillated von Mises-Fisher distribution on the $2$-sphere in $\mathbb{R}^3$. This distribution is a commonly used distribution in physics and bioinformatics, for instance, to model the electric field-induced dipole interaction. For other complicated applications, it is hard to construct a ground-truth exact solution with an exact source term. So we refer to \cite{berry2015nonparametric1,berry2015nonparametric2} for other comparison methods without knowing an exact solution.

We choose the spherical coordinates as
\begin{equation}
 \theta \in [0,\pi],\quad  \varphi \in[0,2\pi],\quad \text{ with } \,\,  x=\cos \varphi \sin \theta,\quad y = \sin\varphi \sin \theta,\quad z = \cos \theta. 
\end{equation}
For $t\in[0, 2]$,
define three   parameters
\begin{equation}
\kappa(t) = 1+ 0.2\sin(t), \quad a(t) = \pi/2 + 0.2 \sin (3t), \quad b(t) =  5t. 
\end{equation}
Define the polar angle
\begin{equation}
\eta(\theta, \varphi, t) = \cos a(t) \cos \theta + \sin a(t) \sin \theta \cos(\varphi - b(t)).
\end{equation} 
Then we choose the exact solution as the von Mises–Fisher distribution
\begin{equation}\label{VonE}
\rho_v(\theta, \varphi, t) = C(\kappa(t))e^{\kappa(t) \eta(\theta, \varphi, t) },
\end{equation}
where $C(\kappa)=\frac{\kappa}{4\pi \sinh \kappa}.$

Based on the surface gradient and surface divergence on sphere
\begin{equation}
\nabla_{\nn} f = \frac{\pt f}{\pt \theta} \hat{\theta} + \frac{1}{\sin \theta} \frac{\pt f}{\pt \varphi} \hat{\varphi}, \quad \nabla_{\nn} \cdot \vec{F} = \frac{1}{\sin\theta} \frac{\pt}{\pt \theta} [\sin\theta F_\theta] + \frac{1}{\sin \theta} \frac{\pt F_\varphi}{\pt \varphi},
\end{equation}
then it satisfies  Fokker-Planck equation \eqref{FP_N1} with source term
\begin{equation}\label{source46}
\begin{aligned}
&g(\theta, \varphi,t)= \divn (\nabla_\nn \rhon + \rhon_t \nabla_\nn U_{\nn}) -\pt_t \rhon \\
=&\frac{\pt^2 \rhon}{\pt \theta^2} + \frac{\pt \rhon}{\pt \theta} \frac{\pt U_{\nn}}{\pt \theta}+ \rhon \frac{\pt^2 U_{\nn}}{\pt\theta^2} +  {\cot \theta} [\frac{\pt\rhon}{\pt \theta} + \rhon \frac{\pt U_{\nn}}{\pt \theta}] + \frac{1}{\sin^2 \theta} [\frac{\pt^2 \rhon}{\pt \varphi^2} + \frac{\pt \rhon}{\pt \varphi} \frac{\pt U_{\nn}}{\pt \varphi} + \rhon \frac{\pt^2 U_{\nn}}{\pt \varphi^2} ] - \pt_t \rhon. 
\end{aligned}
\end{equation}
Take $U=1$,  plugging \eqref{VonE} into the RHS of \eqref{source46},
we obtain
\begin{equation}\label{gss}
g(\theta, \varphi, t) = \rho \Bigg[ \kappa^2 \big[\eta^2_\theta + \sin^2 a \sin^2(\varphi-b) \big] -2\kappa  \eta   - \frac{C'}{C} \kappa'- (\kappa' \eta + \kappa \eta_t) \Bigg];
\end{equation}
see detailed in Appendix \ref{app_von}.

With 
$\pi = e^{-U}$, and the source term $g$ computed from the exact solution \eqref{gss}.
Then Algorithm \ref{explicit FP algorithm}, i.e., the explicit scheme \eqref{semi_1}, becomes
\begin{equation} 
\frac{\rho_i^{k+1}}{\pi_i} =\frac{\rho^k_i}{\pi_i} +  \frac{ \tilde{\lmd}_i  \Delta t }{1+\tilde{\lmd}_i \Delta t} \left(  \sum_{j\in \text{VF}(i)} \tilde{ P}_{ij}   \frac{\rho^{k}_j }{\pi_j} -\frac{\rho^k_i}{\pi_i} \right) - \Delta t \frac{g^k_i}{\pi_i}.
\end{equation}
Here  for the discrete source term $g^k_i$, we use continuous time derivatives at time step $k$ and discrete spacial derivative on grid $i$. 
For $u_i^{k+1}:=\frac{\rho_i^{k+1}}{\pi_i}$, with the additional source term $g(\theta,\varphi,t)$,  the matrix formulation with $Q$ defined in \eqref{BBn} is
$
u^{k+1} = (I+\Delta t Q) u^k  - \Delta t\frac{g^k}{\pi}.
$

To compare the numerical solution and the exact solution with a long time validation. We take $\Delta t = 0.001$ and  final time as $T = 2000*\Delta t$ with iteration number $2000$.  We first sample $2000$ data points on a unit sphere $\nn = S^2\subset \mathbb{R}^3$, then we  compute the approximated Voronoi cell volumes $|\tilde{C}_i|_{i=1}^n$ and areas $\tilde{\Gamma}_{ij}$ from Algorithm \ref{Voronoi cell on manifold algorithm} by taking the bandwidth $r=0.3.$ The equilibrium $\{\pi_i\}$ is taking to be constant, which is normalized so that the total mass condition \eqref{adjust} is satisfied. In Fig. \ref{fig:groud}. We plot 6  snapshots at $t = 0, 0.4, 0.8, 1.2, 1.6, 2.0$ for both numerical solution $\rho_i$ and exact solution ${\rho_v}(\theta, \varphi,t)$ in \eqref{VonE} starting from the same initial data given by $\rho_v(\theta, \varphi,0)$. We also list Table \ref{Table:e} to show the root mean square error (RMSE) $e:=\sqrt{ \frac{1}{2000}\sum_{i=1}^{2000}|\rho_i-\rho_v(i)|^2 }$ at these 6 times. A video showing the dynamics of both numerical solution $\rho_i$ and exact solution ${\rho_v}$  is provided in \url{https://youtu.be/x98J8CSYBq8}. 
 \begin{table}[ht]
\caption{The root mean square error $e$}\label{Table:e}
\begin{tabular}{|c|c|c|c|c|c|c|}
\hline 
Time & $0$ & $0.4$ & $0.8$ & $1.2$ & $1.6$ & $2$ \\ 
\hline 
RMSE & $0$ & $0.0151$ & $0.0138$ & $0.0126$ & $0.0149$ & $0.0140$ \\ 
\hline 
\end{tabular} 
\end{table}
\begin{figure}
\includegraphics[scale=0.32]{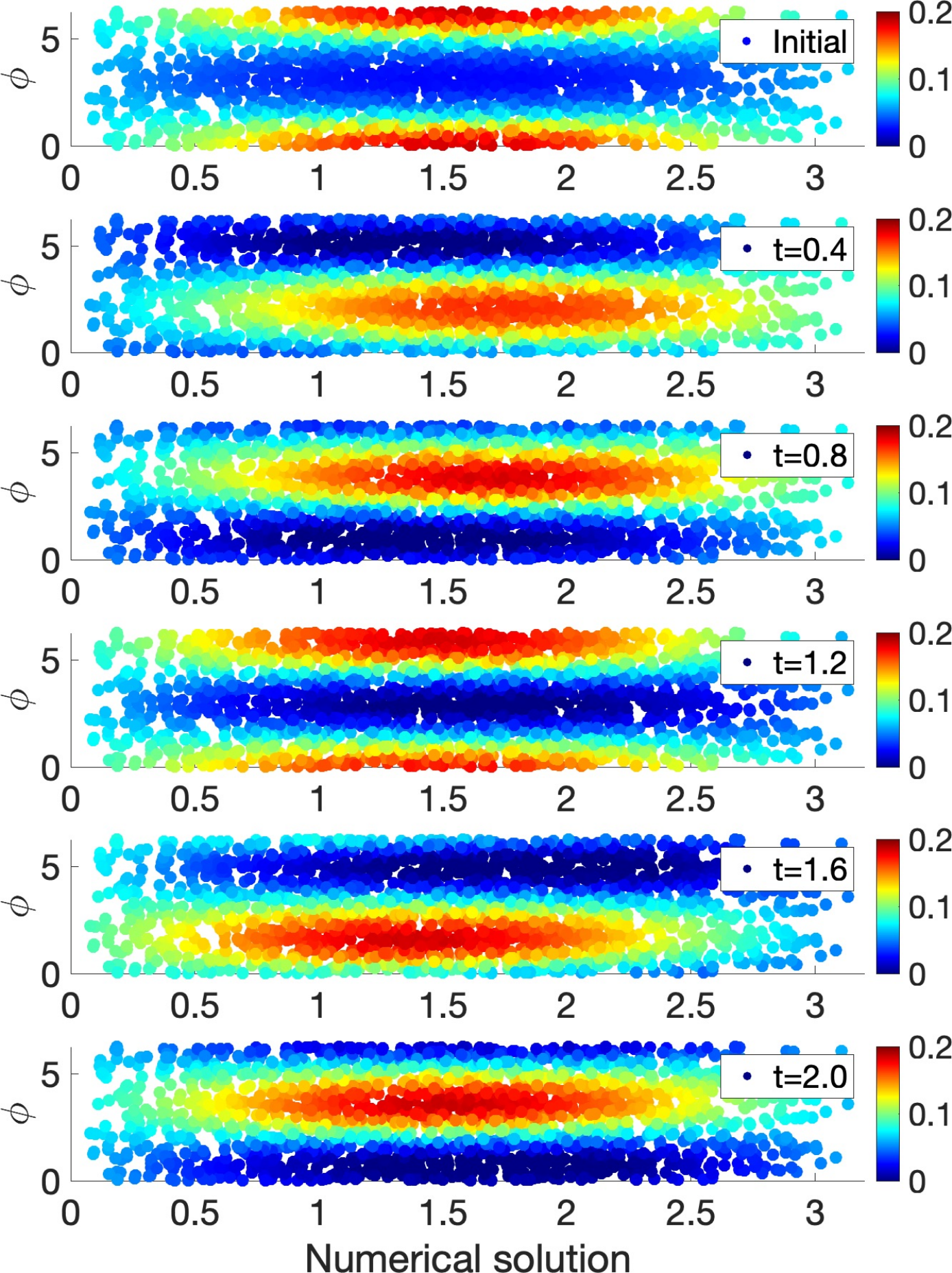} 
\includegraphics[scale=0.32]{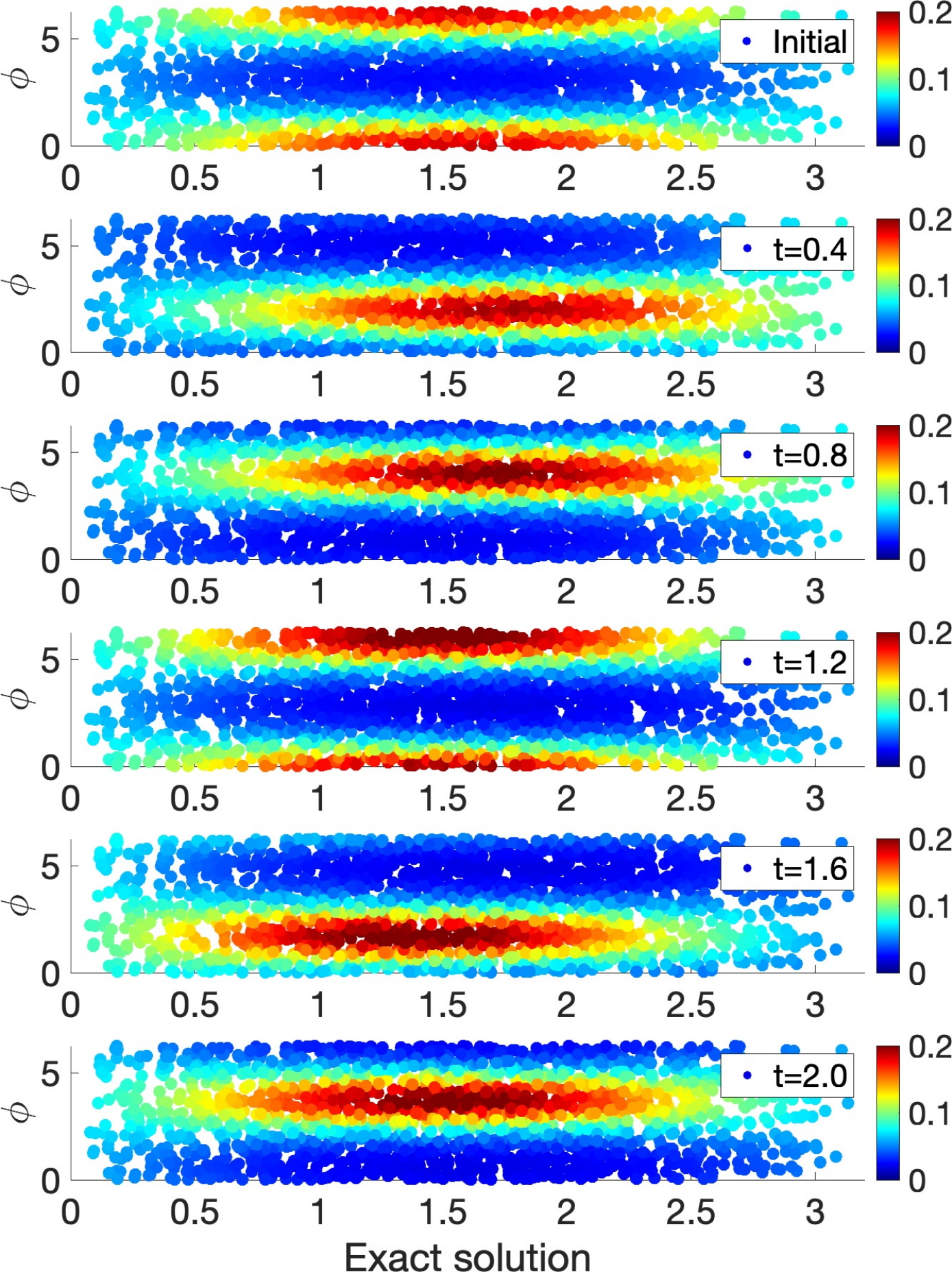} 
  \caption{Left: The numerical solution  $\rho^k_i$ in terms of $\theta, \varphi$   over $2000$ points $\{\my_i\}_{i=1}^{2000} \subset S^2 \subset \mathbb{R}^3$.   Right: The exact solution given by the oscillated von Mises–Fisher distribution ${\rho_v}(\theta, \varphi,t)$ in \eqref{VonE}.  We plot 6 snapshots at $t = 0, 0.4, 0.8, 1.2, 1.6, 2.0$.}\label{fig:groud}
\end{figure}

}

\subsection{\textnormal{\textbf{Example I: Fokker-Planck evolution on dumbbell.}} }

\

Suppose $(\theta,\phi) \in [0, 2\pi) \times [0, \pi)$, then we have the following dumbbell {in} $\mathbb{R}^{200}$ parametrized as $(x,y,z,0,\cdots, 0)=f_1(\theta, \phi) \in \mathbb{R}^{200}$, where
\begin{align}\label{parametrization of the dumbbell}
& r=\sqrt{\sqrt{1+0.95^4(\cos(2\phi)^2-1)}+0.95^2\cos(2\phi)} \\
& x=r \sin(\phi)\cos(\theta),\quad 
 y=r \sin(\phi)\sin(\theta),\quad  z=r \cos(\phi).  \nonumber 
\end{align}
After {composition} with a dilation and rotation map $f_2$ of $\mathbb{R}^{200}$ , we have an embedded dumbbell $\mm \subset \mathbb{R}^{200}$. {Suppose} $f_2 \circ f_1(\theta, \phi)$  is the parametrization of $\mm$.  We sample $4000$ points $(\theta_{1}, \phi_{1}), \cdots, (\theta_{4000}, \phi_{4000})$ on $[0, 2\pi) \times [0, \pi)$. Let $\mx_i=f_2 \circ f_1(\theta_i, \phi_i)$, then we have a non uniform sample $\{\mx_i\}_{i=1}^{4000}$ on $\mm$. We apply the diffusion map to find the reaction coordinates $\{\my_i\}_{i=1}^{4000}$ of $\{\mx_i\}_{i=1}^{4000}$ in $\mathbb{R}^3$, i.e. $\{\my_i\}_{i=1}^{4000}$ can be regarded {as} a non uniform sample on a dumbbell  $\nn \subset \mathbb{R}^3$. 

Suppose $\psi_i$ is the $i$ th eigenfunction of the Laplace-Beltrami operator on $\nn$.  Assume the initial density $\rho^0$ is $\psi_2$ plus some constant (so that $\rho^0$ is positive) as shown in Fig \ref{initial and final dumbbell}. Assume the equilibrium density $\pi$ is $\psi_8$ plus some constant as shown in Fig \ref{initial and final dumbbell}. We first obtain the approximated Voronoi cell volumes $|\tilde{C}_i|_{i=1}^{4000}$ and the areas $\tilde{\Gamma}_{ij}$ from Algorithm \ref{Voronoi cell on manifold algorithm} by taking the bandwidth $r=0.16$ and threshold $s=0$. Then we adjust the initial data, i.e., we replace $\rho^0$ by $c\rho^0$ such that  \eqref{adjust} holds.  We set the time step $\Delta t=0.05$. Let $T=k \Delta t$ for the integer $k$ and $1 \leq k \leq 20000$, i.e., we iterate the scheme for $20000$ times and set the final time to be $T=20000*\Delta t = 1000.$ We use the unconditional stable explicit scheme \eqref{559semi} to solve $\rho^k$.  We compare the numerical relative error in maximum norm with the theoretic relative error, $|\lambda_2|^k=0.9997^k$ in \eqref{gap_error}, in the semilog-plot in Fig \ref{error comparison dumbbell}. The exponential convergence rate is exactly same. To clearly see the dynamics of the change of the density over the $4000$ points, we plot $\rho^k$ for $k=20, 60, 100, 160, 220, 4000$, correspondingly $T=1, 3, 5, 8, 11, 200$ in Fig \ref{time evolution dumbbell}

\begin{figure}
  \includegraphics[scale=0.45]{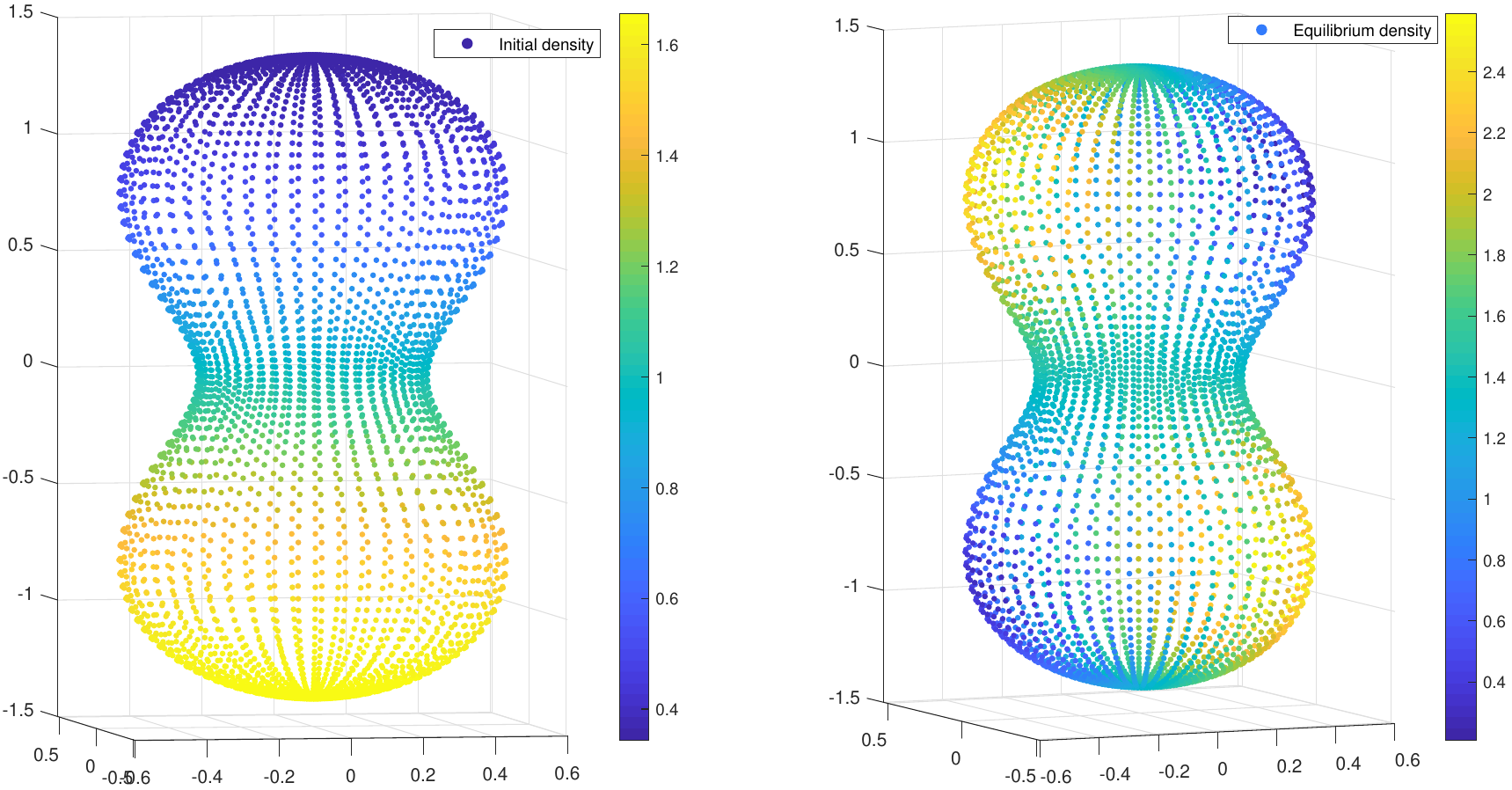} 
  \caption{Left: The initial density is the second eigenfunction of the Laplace Beltrami operator on a dumbbell $\nn \subset \mathbb{R}^3$ plus a constant. We plot it over $4000$ points $\{\my_i\}_{i=1}^{4000} \subset \nn \subset \mathbb{R}^3$.  Right: The equilibrium density is the eighth eigenfunction of the Laplace Beltrami operator on a dumbbell $\nn \subset \mathbb{R}^3$ plus a constant. We plot it over $4000$ points $\{\my_i\}_{i=1}^{4000} \subset \nn \subset \mathbb{R}^3$.  }\label{initial and final dumbbell}
\end{figure}

\begin{figure}
  \includegraphics[scale=0.8]{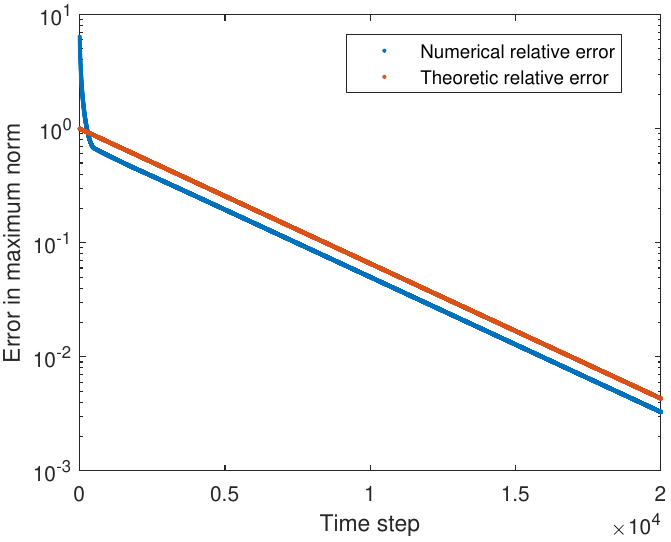} 
  \caption{The semilog-plot comparison between the numerical relative error with theoretic relative error.  The numerical relative error is the error from the  unconditional stable explicit scheme \eqref{559semi} with $\Delta t=0.05$ and $1 \leq k \leq 20000$. The theoretic relative error is {based} on \eqref{gap_error} with  $|\lambda_2|^k=0.9997^k$ .}\label{error comparison dumbbell}
\end{figure}

\begin{figure}
\includegraphics[width=14cm, height=13cm]{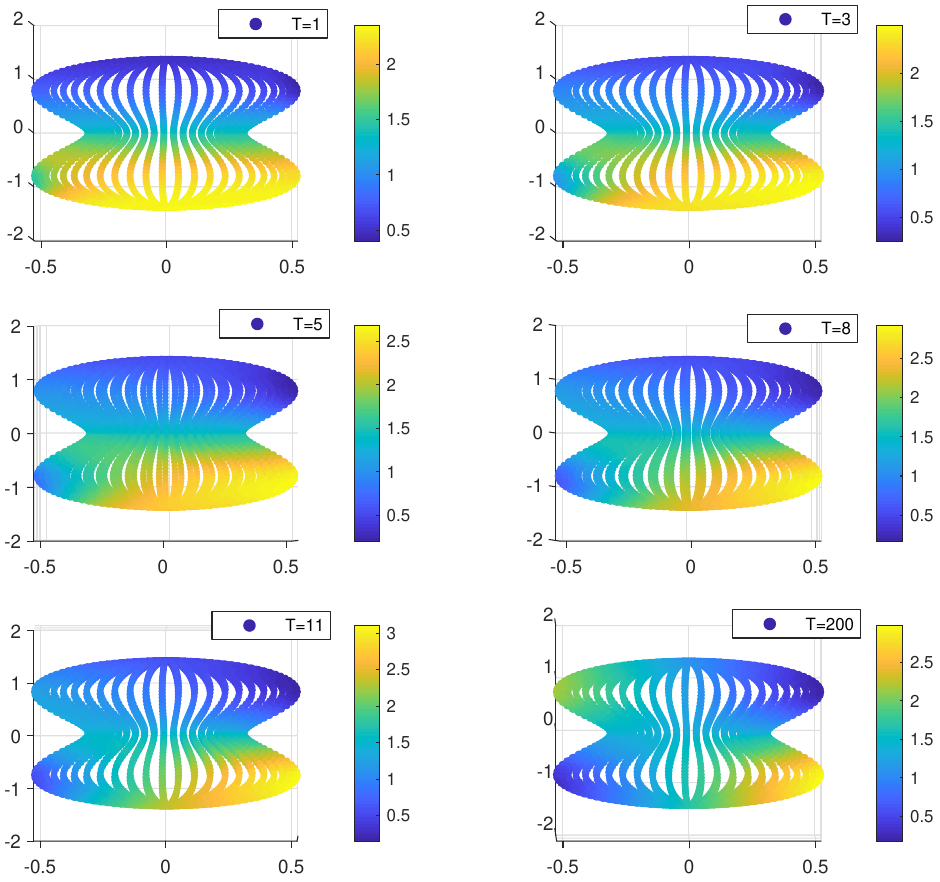} 
\caption{The density dynamics $\rho^k$ from the  unconditional stable explicit scheme \eqref{559semi} with $\Delta t=0.05$. We plot $\rho^k$
for $k=20, 60, 100, 160, 220, 4000$, correspondingly on  time $T=1, 3, 5, 8, 11, 200$.}\label{time evolution dumbbell}
\end{figure}

{\subsection{\textnormal{\textbf{Example II: Fokker-Planck evolution on Klein bottle.}} }
  
\

Suppose $(\theta,\phi) \in [0, 2\pi) \times [0, 2\pi)$, then we have the following Klein bottle in $\nn \subset \mathbb{R}^{4}$ parametrized as $(x,y,z,w)=f(\theta, \phi) \in \mathbb{R}^{4}$, where
\begin{align}\label{parametrization of klein bottle}
& x=(1+0.3\cos(\theta))\cos(\phi) \\
& y=(1+0.3\cos(\theta))\sin(\phi) \nonumber\\
& z=0.3\sin(\theta)\cos(\frac{\phi}{2}) \nonumber\\
& w=0.3\sin(\theta)\sin(\frac{\phi}{2}) \nonumber
\end{align} 
We sample $2000$ points $(\theta_{1}, \phi_{1}), \cdots, (\theta_{2000}, \phi_{2000})$ on $[0, 2\pi) \times [0, 2\pi)$. Let $\my_i= f(\theta_i, \phi_i)$, then we have  non uniform samples $\{\my_i\}_{i=1}^{2000}$ on $\nn$. We can regard them as the reaction coordinates of 2000 points sampled on $\mm$ (a manifold diffeomorphic to a Klein bottle) in some high dimensional space. In this example, we will visualize the functions on the Klein bottle by two methods. First, consider the projection from $\mathbb{R}^4$ to $\mathbb{R}^3$  by $(x,y,z,w) \rightarrow (x,y,z)$. The restriction of the projection on $\nn$ maps the Klein bottle to a pinched torus in $\mathbb{R}^3$. Second, consider  the projection from $\mathbb{R}^4$ to $\mathbb{R}^3$  by $(x,y,z,w) \rightarrow (y,z,w)$. The restriction of the projection on $\nn$ maps the Klein bottle to a Roman surface in $\mathbb{R}^3$. For any function on the Klein bottle, we will visualize it by plotting it on both the  pinched torus and the Roman surface.
\begin{figure}
  \includegraphics[scale=0.9]{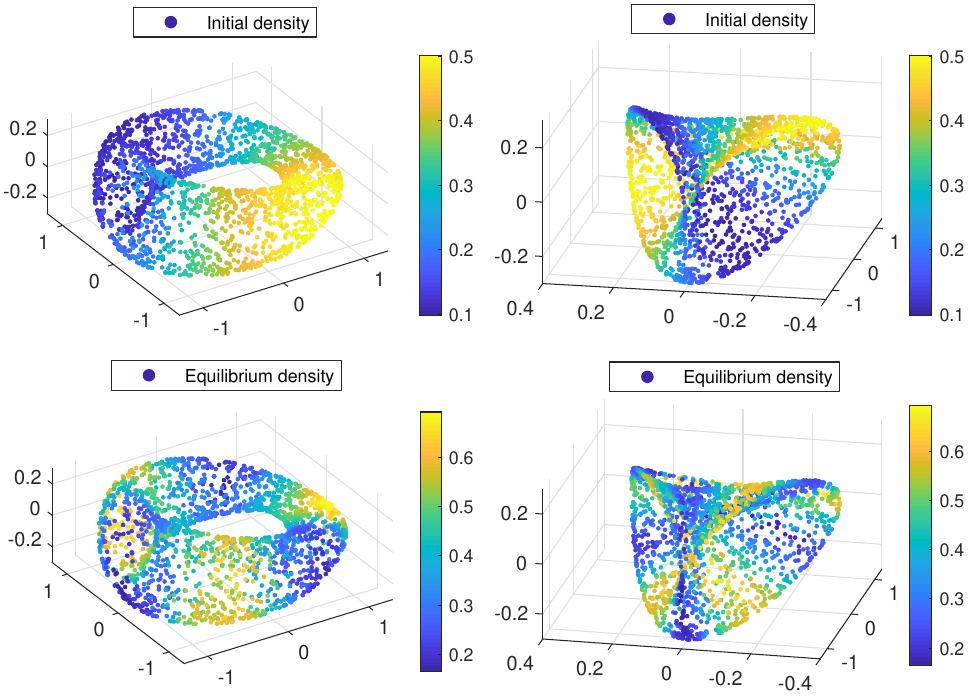} 
  \caption{Top two panels: The initial density is the second eigenfunction of the Laplace Beltrami operator on a Klein bottle  $\nn \subset \mathbb{R}^4$ plus a constant. We plot it over the pinched torus and the roman surface in $\mathbb{R}^3$ respectively.  Bottom two panels: The equilibrium density is the seventh eigenfunction of the Laplace Beltrami operator on a Klein bottle  $\nn \subset \mathbb{R}^4$ plus a constant. We plot it over the pinched torus and the roman surface in $\mathbb{R}^3$ respectively. }\label{initial and final bottle}
\end{figure}

Suppose $\psi_i$ is the $i$ th eigenfunction of the Laplace-Beltrami operator on $\nn$.  Assume the initial density $\rho^0$ is $\psi_2$ plus some constant (so that $\rho^0$ is positive) as shown in Fig \ref{initial and final bottle}. Assume the equilibrium density $\pi$ is $\psi_7$ plus some constant as shown in Fig \ref{initial and final bottle}. We first obtain the approximated Voronoi cell volumes $|\tilde{C}_i|_{i=1}^{2000}$ and the areas $\tilde{\Gamma}_{ij}$ from Algorithm \ref{Voronoi cell on manifold algorithm} by taking the bandwidth $r=0.23$ and threshold $s=0$. Then we adjust the initial data, i.e., we replace $\rho^0$ by $c\rho^0$ such that  \eqref{adjust} holds.  We set the time step $\Delta t=0.05$. Let $T=k \Delta t$ for the integer $k$ and $1 \leq k \leq 10000$, i.e., we iterate the scheme for $10000$ times and set the final time to be $T=10000*\Delta t = 500.$ We use the unconditional stable explicit scheme \eqref{559semi} to solve $\rho^k$.  We compare the numerical relative error in maximum norm with the theoretic relative error, $|\lambda_2|^k=0.9993^k$ in \eqref{gap_error}, in the semilog-plot in Fig \ref{error comparison bottle}. The exponential convergence rate is exactly {the} same. To clearly see the dynamics of the change of the density over the $2000$ points, we plot $\rho^k$ for $k=50, 1000, 2000, 10000$, correspondingly $T=2.5, 50, 100, 500$ in Fig \ref{time evolution bottle}

\begin{figure}
  \includegraphics[scale=0.8]{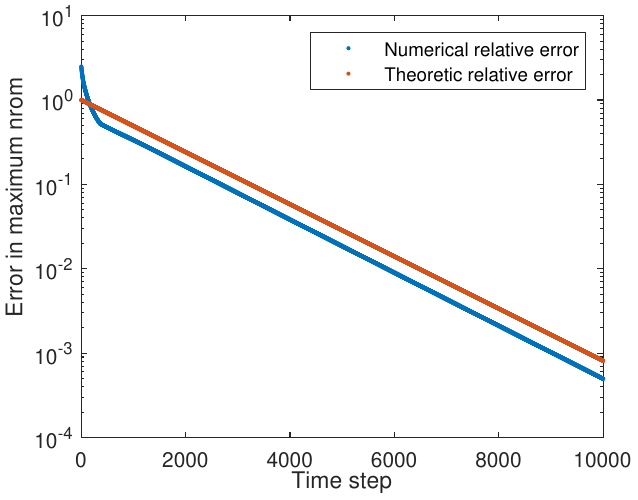} 
  \caption{The semilog-plot comparison between the numerical relative error with theoretic relative error in the Klein bottle example.  The numerical relative error is the error from the  unconditional stable explicit scheme \eqref{559semi} with $\Delta t=0.05$ and $1 \leq k \leq 10000$. The theoretic relative error is base on \eqref{gap_error} with  $|\lambda_2|^k=0.9993^k$.}\label{error comparison bottle}
\end{figure}

\begin{figure}
  \includegraphics[width=13cm, height=16cm]{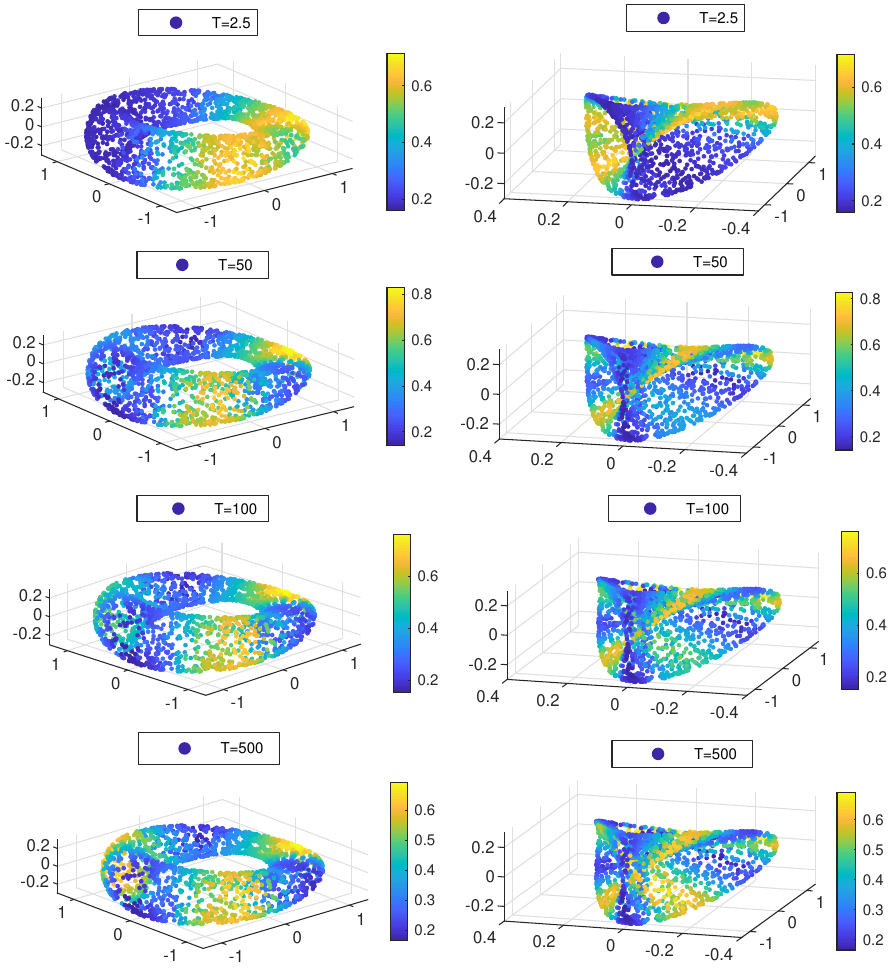} 
  \caption{The density dynamics $\rho^k$ from the  unconditional stable explicit scheme \eqref{559semi} with $\Delta t=0.05$. On the left four panels, we plot $\rho^k$ for $k=50, 1000, 2000, 10000$ corresponding to time  $T=2.5, 50, 100, 500$ on the pinched torus.  On the right four panels, we plot $\rho^k$ for $k=50, 1000, 2000, 10000$ corresponding to time  $T=2.5, 50, 100, 500$ on the Roman surface.}\label{time evolution bottle}
\end{figure}
}

\subsection{\textnormal{\textbf{Example III: The ``breakup" of Pangaea via Fokker-Planck evolution on sphere.}} }

\
In this example, we use the Fokker-Planck evolution on sphere to simulate the dynamics of the altitude of continents and the depth  of oceans for earth based on the dataset for initial distribution of Pangaea {supercontinent (250 million years ago)} and the equilibrium distribution of the current earth.

Suppose $\{\my_i\}_{i=1}^{2000}$ are the points on the unit sphere $\nn=S^2 \subset \mathbb{R}^3$, i.e., $\{\my_i\}_{i=1}^{2000}$ are the reaction coordinates of $2000$ points on  $\mm$ (a manifold diffeomorphic to a sphere) in some high dimensional space. Assume the initial density $\rho^0_i$ at $\{\my_i\},\, i=1,\cdots, 2000$ are extracted from  the Pangaea continents map file \cite{wiki}  as shown in Fig \ref{fig_earth1} (down left). { The value of the initial density $\rho^0_i\in\{1,2\}$ where $1$ represents  oceans and $2$ represents continents.}  Assume the equilibrium $\{\pi_i\}$ at $\{\my_i\}$ are collected from the ETOPO5 topography data \cite{RN179} expressing the altitude of continents and the depth  of oceans for earth. { The value of the equilibrium $\{\pi_i\}$ ranges from $-7000$ to $7000$ where the positive values represent the altitude of continents, negative values represent the depth of oceans and $0$ represents sea level. Before plugging into the Fokker-Planck equation, we add a constant $c_p$ to $\pi_i$ such that $\pi_i>0$ for all $i$. However, when showing the evolution of continents in figures, we subtract this constant and present the true physical altitudes.} 

 We first obtain the approximated Voronoi cell volumes $|\tilde{C}_i|_{i=1}^n$ and areas $\tilde{\Gamma}_{ij}$ from Algorithm \ref{Voronoi cell on manifold algorithm} by taking the bandwidth $r=0.3$ and threshold $s=0$. Then we adjust the initial data, i.e., we replace $\rho^0$ by $c\rho^0$ such that { the total mass condition}  \eqref{adjust} holds.     We set the time step $\Delta t=0.05$. Let $T=k \Delta t$ for the integer $k$ and $1 \leq k \leq 10000$, i.e., we iterate the scheme for $10000$ times and set the final time to be $T=10000*\Delta t = 500.$ We use the {unconditionally} stable explicit scheme \eqref{559semi} to solve $\rho^k$.  In Fig \ref{fig_earth1} (up), the numerical relative error in maximum norm is semilog-plotted using circles. Compared with decay of the theoretic relative error $|\lambda_2|^k$ in \eqref{gap_error}, blue line in the semilog-plot, the exponential convergence rate is exactly same.  The initial 3D plot of Pangaea continents is shown in Fig \ref{fig_earth1} (down left) while the final 3D plot at $T=500$ of the simulated altitude  and depth  of continents and oceans  are shown in Fig \ref{fig_earth1} (down right)\footnote{The altitude and depth exceed the range $[-3800\text{m},3800\text{m}]$ is cut off for clarity.}.  To clearly see the dynamics of  altitude and depth of continents and oceans at $n$ points with longitude and latitude, starting from {the} same Pangaea continents  with time step $\Delta t=0.05$, four snapshots at { $T=0, 1, 10, 75$ of the dynamics   are shown in Fig  \ref{fig_earth2}. A video is also provided to show the dynamics of the density \url{https://youtu.be/j5XBPdQhEEs}. Here we used nonuniform time intervals since the shapes of continentals (the region with positive altitudes) quickly move from the initial Pangaea supercontinents towards the equilibrium shape of current continents.} If we only care about the shape of the continents and keep the binary-valued density during the shape evolution, we refer to \cite{gao2020inbetweening} for the thresholding adjustment method.

\begin{figure}
  \includegraphics[scale=0.4]{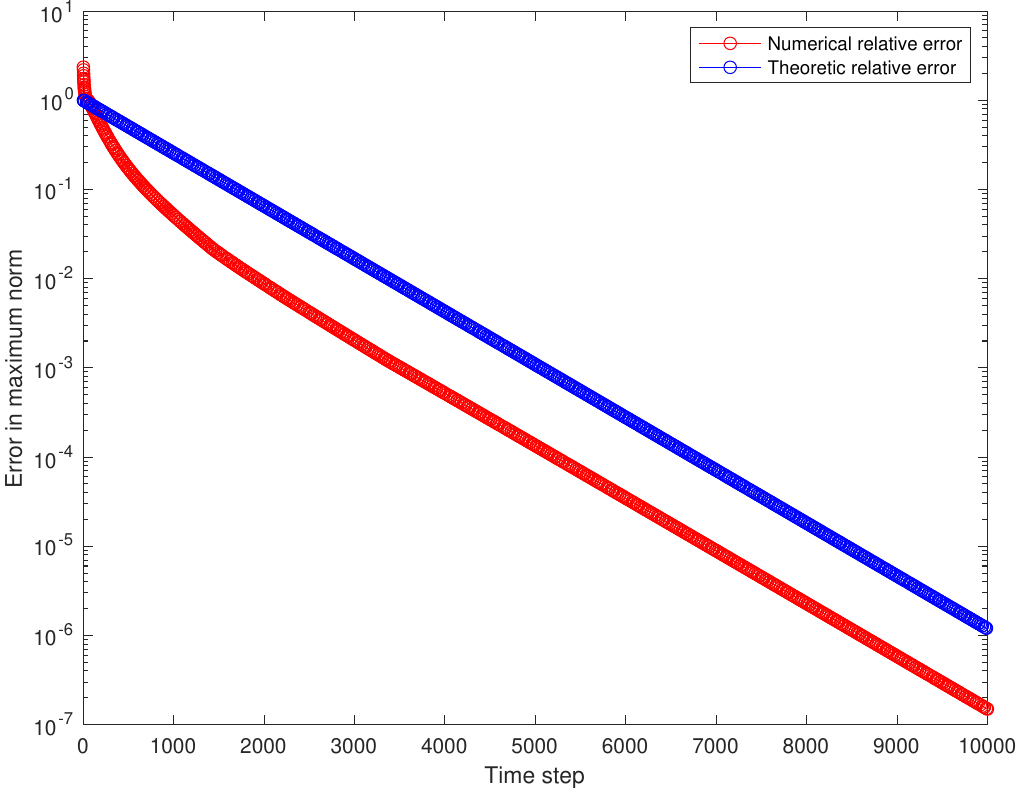}
   
  \includegraphics[scale=0.4]{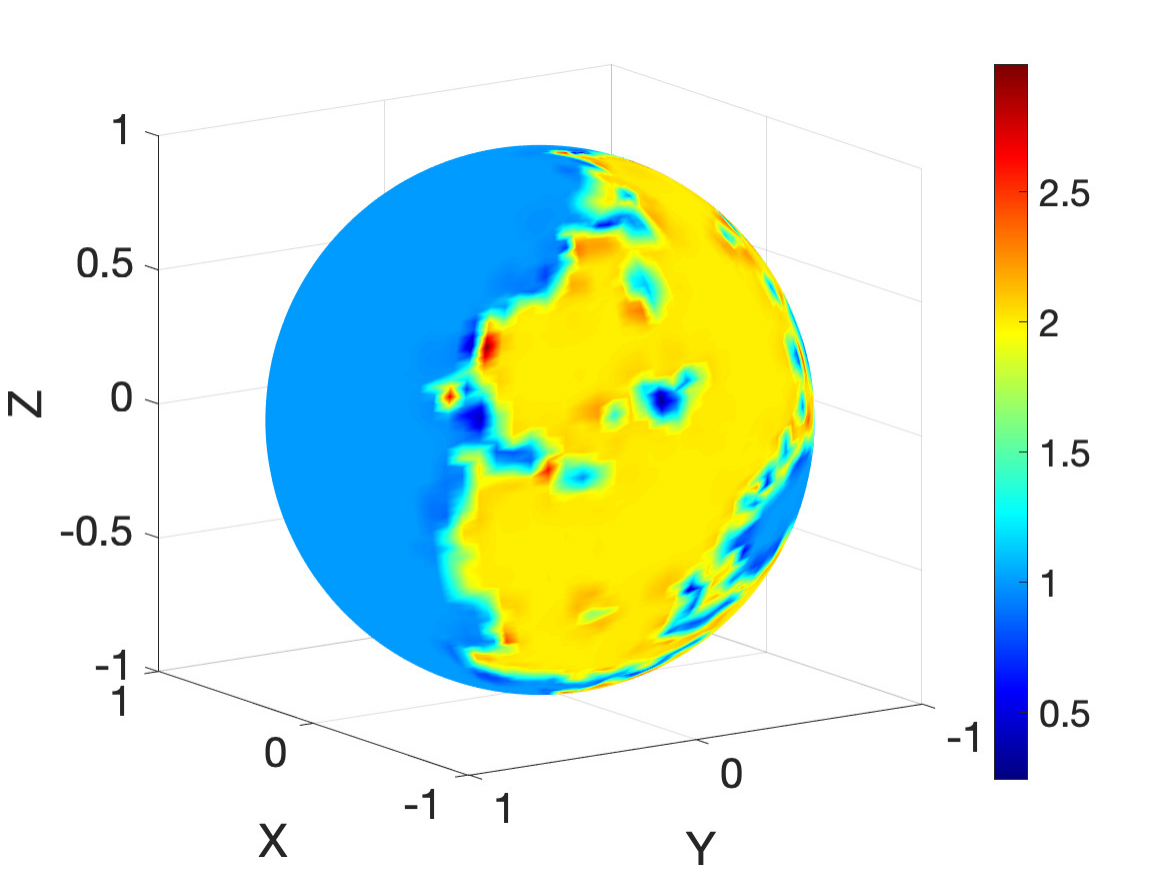} 
  \includegraphics[scale=0.4]{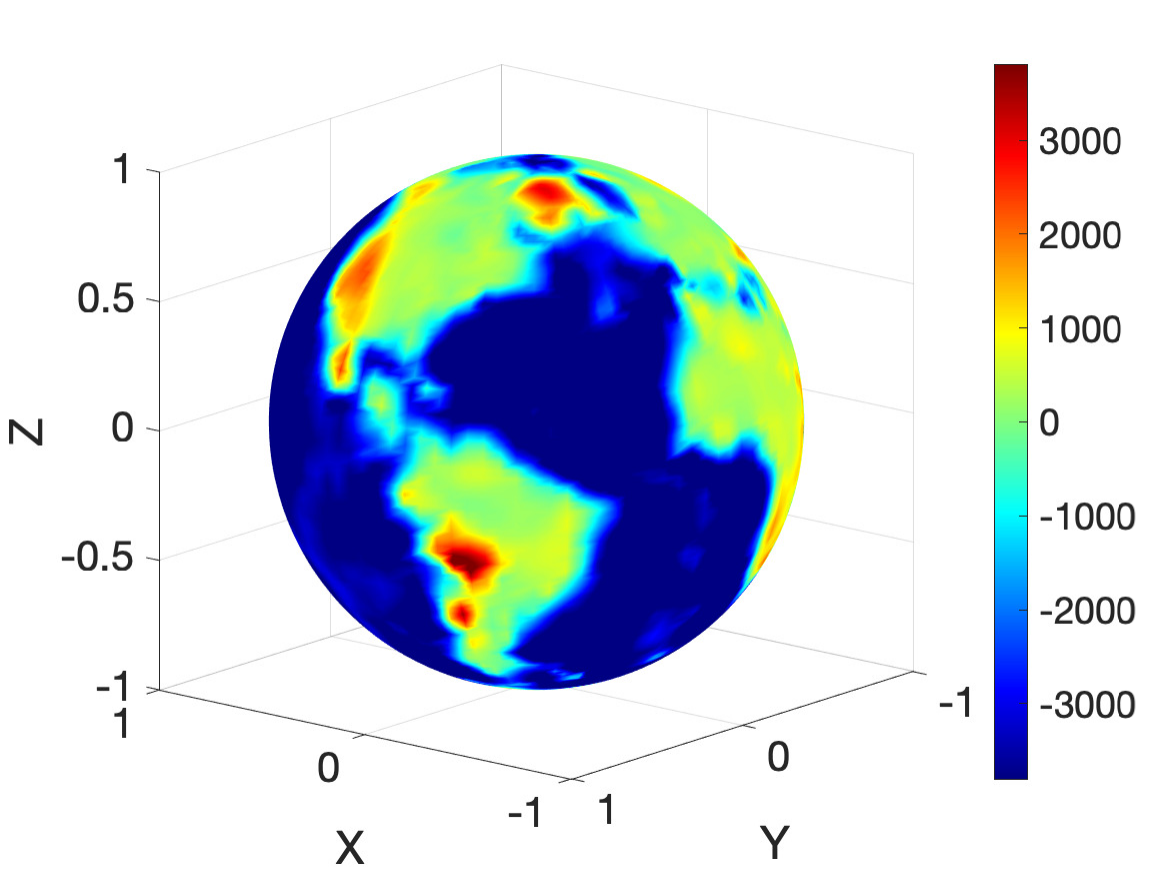} 
  \caption{Simulations for the density dynamics of altitude and depth of continents and oceans  starting from pangaea (down left) to the final altitude of land-ocean (down right) with parameters  $\ud t = 0.05$, $T=500$. (up)The semilog-plot comparison between the numerical relative error in maximum norm  (blue circle) with theoretic relative error $|\lambda_2|^k=0.9985^k$ (blue line)  in \eqref{gap_error}. }\label{fig_earth1}
\end{figure}
\begin{figure}
\includegraphics[width=15.5cm, height=10cm]{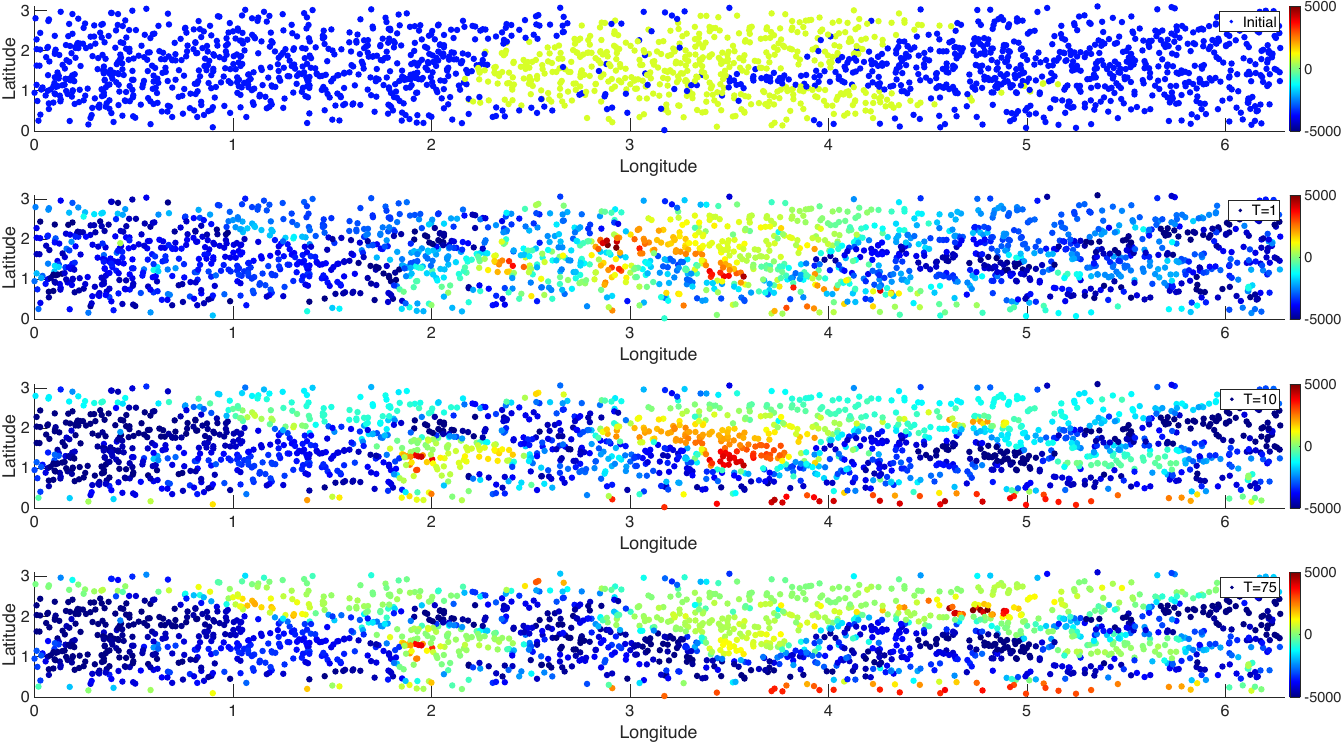}
  \caption{2D Snapshots for the density dynamics of altitude and depth of continents and oceans at $2000$ points with longitude and latitude  starting from Pangaea with parameters  { $\ud t = 0.05$, $T=0, 1, 10, 75$.} }\label{fig_earth2}
\end{figure}

\section{Discussion}
{We focus on the analysis of the dynamics of a physical system with a manifold structure. The underlying manifold structure of the system is reflected through a point cloud in a high dimensional space. By applying the diffusion map, we find the reaction coordinates so that those data points are reduced onto a manifold in a low dimensional space.  Based on the reaction coordinates, we propose an implementable, unconditionally stable, finite volume scheme for a Fokker-Planck equation which incorporates both the structure of the manifold in the low dimensional space and the equilibrium information. The finite volume scheme defines an approximated Markov process (random walk) on the point cloud with an approximated transition probability and jump rate. We also provide the weighted $L^2$ convergence analysis of the finite volume scheme to the Fokker-Planck equation on the manifold in the low dimensional space. The efficiency, unconditional stability, and accuracy of the data-driven solver proposed in this paper are justified theoretically.  Although we construct several numerical examples to illustrate our data-driven solver, there are still many interesting directions issued from practical problems for future works. An important direction is the manifold-related applications such as the optimal network partitions and the transition path in chemical reactions, especially on the high dimensional practical datasets.} 

\section*{Acknowledgement}

Nan Wu thanks the valuable discussion with Professor Hau-Tieng Wu and Chao Shen. Yuan Gao was supported by the National Science Foundation (NSF) under award DMS-2204288.   Jian-Guo Liu was supported in part by the National Science Foundation (NSF) under award DMS-2106988.

\bibliographystyle{plain}
\bibliography{bibmpmp}


\appendix
\section{Theorems about embedding by eigenfunctions of Laplacian}\label{appendix A}
 Let $\Delta$ be the Laplace-Beltrami operator of a closed smooth Riemannian manifold $\mm$.  Let $\{\lambda_i\}_{i=0}^\infty$ be the eigenvalues of $-\Delta$, and 
\begin{align}
\Delta \psi_i =-\lambda_i \psi_i,
\end{align}
where $\psi_i$ is the corresponding eigenfunction normalized in $L^2(\mm)$. We have $0=\lambda_0 < \lambda_1\leq \lambda_2 \leq  \cdots$.

In this section, we review the theorems about embedding the manifold $\mm$ by using the eigenfunctions of $\Delta$.  In \cite{berard1994embedding}, the authors provide a theorem about spectral embedding by using all the eigenvalues and eigenfunctions of $\Delta$ into the Hilbert space $\ell^2$. 
\begin{thm}(B\'erard-Besson-Gallot, \cite{berard1994embedding})
{ Let $M$ be a $d$ dimensional smooth closed Riemannian manifold. Then, for $\mx \in \mm$ 
\begin{align}
\Psi(\mx)=(2t)^{\frac{d+2}{4}}\sqrt{2}(4\pi)^{\frac{d}{4}}(e^{-\lambda_1t}\psi_1(\mx), \cdots, e^{-\lambda_qt}\psi_q(\mx), \cdots), 
\end{align}
is an embedding of $M$ into $\ell^2$ for all $t>0$.}
\end{thm}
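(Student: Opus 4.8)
The plan is to identify the $\ell^2$-inner product of two points in the image of $\Psi$ with an affine function of the heat kernel $K_s(\mx,\my)=\sum_{i\ge 0}e^{-\lambda_i s}\psi_i(\mx)\psi_i(\my)$ of $(\mm,g)$, and then to run everything through the fact that $K_s$ is an approximate identity as $s\to 0^+$. Throughout, $t>0$ is fixed; recall that for each fixed $s>0$ the spectral series for $K_s$ and all of its term-by-term derivatives converge uniformly on $\mm\times\mm$, and $K_s$ is smooth there.

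\textbf{Step 1 (well-definedness and smoothness of $\Psi$).} Set $C_{d,t}:=2(4\pi)^{d/2}(2t)^{(d+2)/2}$, the square of the normalizing constant in the statement. Using $\psi_0\equiv\vol(\mm)^{-1/2}$ and the spectral expansion of $K_{2t}$,
\begin{equation}
\la \Psi(\mx),\Psi(\my)\ra_{\ell^2}=C_{d,t}\sum_{i\ge 1}e^{-2\lambda_i t}\psi_i(\mx)\psi_i(\my)=C_{d,t}\Big(K_{2t}(\mx,\my)-\tfrac{1}{\vol(\mm)}\Big).
\end{equation}
In particular $\|\Psi(\mx)\|_{\ell^2}^2=C_{d,t}(K_{2t}(\mx,\mx)-\vol(\mm)^{-1})<\infty$, so $\Psi$ really does take values in $\ell^2$. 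Differentiating the uniformly convergent series, $\Psi$ is a smooth $\ell^2$-valued map with $d\Psi_{\mx}(v)=\big(C_{d,t}^{1/2}e^{-\lambda_i t}\,\ud\psi_i|_{\mx}(v)\big)_{i\ge 1}$; the required tail bound $\sum_{i\ge 1}e^{-2\lambda_i t}|\nabla\psi_i(\mx)|^2<\infty$ follows from $\int_\mm|\nabla\psi_i|^2\,\ud V_\mm=\lambda_i$, Weyl asymptotics and elliptic estimates, or directly from smoothness of $K_{2t}$ near the diagonal.

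\textbf{Step 2 (injectivity).} If $\Psi(\mx)=\Psi(\my)$, comparing coordinates gives $\psi_i(\mx)=\psi_i(\my)$ for all $i\ge 1$, hence for $i=0$ as well since $\psi_0$ is constant. Therefore $K_s(\mx,\cdot)=K_s(\my,\cdot)$ on $\mm$ for every $s>0$. For any $\phi\in C(\mm)$, $\int_\mm K_s(\mx,\mathbf z)\phi(\mathbf z)\,\ud V_\mm(\mathbf z)=(e^{s\Delta}\phi)(\mx)\to\phi(\mx)$ as $s\to 0^+$, so $\phi(\mx)=\phi(\my)$ for all continuous $\phi$, forcing $\mx=\my$.

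\textbf{Step 3 (immersion).} Suppose $v\in T_\mx\mm$ with $d\Psi_\mx(v)=0$; then $\ud\psi_i|_\mx(v)=0$ for all $i$. Differentiating the spectral series of $K_s$ in its first argument (legitimate for fixed $s>0$) gives $\la\nabla_\mx K_s(\mx,\mathbf z),v\ra=0$ for all $\mathbf z$. Hence for $\phi\in C^\infty(\mm)$, $\la\nabla(e^{s\Delta}\phi)(\mx),v\ra=\int_\mm\la\nabla_\mx K_s(\mx,\mathbf z),v\ra\phi(\mathbf z)\,\ud V_\mm(\mathbf z)=0$; letting $s\to 0^+$ and using $e^{s\Delta}\phi\to\phi$ in $C^1(\mm)$ yields $\ud\phi|_\mx(v)=0$ for every smooth $\phi$, so $v=0$. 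Since $\mm$ is compact, an injective immersion is an embedding, which proves the theorem. The same scheme, now keeping track of the $t$-dependence in $C_{d,t}$ and in the small-time heat-kernel asymptotics, is what upgrades "embedding" to the near-isometry statements of B\'erard--Besson--Gallot, but that refinement is not needed here.

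The main obstacle is analytic bookkeeping rather than geometry: one must justify that $\Psi$ is genuinely a $C^\infty$ map into the Hilbert space $\ell^2$ (tail estimates for $\sum e^{-2\lambda_i t}|\nabla\psi_i|^2$ and higher derivatives), that the differentiated heat-kernel series converges uniformly for each fixed $s>0$, and that $e^{s\Delta}\phi\to\phi$ in $C^1$ for smooth $\phi$. All of these are standard consequences of heat-kernel regularity and Weyl asymptotics, but they are precisely what make the argument rigorous; once granted, Steps 2 and 3 are short.
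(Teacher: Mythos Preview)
The paper does not supply its own proof of this theorem; it is stated in Appendix~A as a cited result of B\'erard--Besson--Gallot and immediately followed by further citations, with no argument given. So there is nothing in the paper to compare your proof against.

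That said, your argument is correct and is essentially the original heat-kernel proof: identify $\langle\Psi(\mx),\Psi(\my)\rangle_{\ell^2}$ with an affine function of $K_{2t}(\mx,\my)$, use the approximate-identity property of the heat kernel for injectivity, and the same idea after one differentiation for the immersion property. The analytic caveats you flag (uniform convergence of the differentiated spectral series, $C^1$ convergence of $e^{s\Delta}\phi$) are the right ones and are standard on a closed manifold. Nothing is missing.
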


\cite{jones2008manifold} improves the above result locally. They show that one can use finite eigenfunctions of Laplace-Beltrami operator to embed the manifold locally. The result can be briefly summarized as follows. 
\begin{thm}(Jones-Maggioni-Schul, \cite{jones2008manifold})
 Let $\mm$ be a $d$ dimensional smooth closed Riemannian manifold, for each $\mx \in M$, there are $j_1 \leq \cdots \leq j_d$ and the constants $C_1, \cdots, C_d$ such that 
\begin{align}
\Psi(\mx)=(C_1\psi_{j_1}(\mx), \cdots, C_d\psi_{j_d}(\mx)), 
\end{align}
is locally a bi-Lipschitz chart.
\end{thm}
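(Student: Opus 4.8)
The plan is to establish the statement via \emph{heat kernel coordinates}. Fix $z\in\mm$ and a radius $r$ small compared with the injectivity radius and the curvature scale at $z$, and set $t=c_0 r^2$ with $c_0$ a small dimensional constant; let $K_t(\cdot,\cdot)$ be the heat kernel of $\mm$. The first step is to show that, for a suitable choice of reference points $p_1,\dots,p_d$ lying at distance $\asymp\sqrt t$ from $z$ and such that the unit vectors $\exp_{z}^{-1}(p_k)/|\exp_{z}^{-1}(p_k)|$ form an almost orthonormal frame of $T_{z}\mm$, the map
\begin{equation}
F_t(\mx):=\big(K_t(\mx,p_1),\dots,K_t(\mx,p_d)\big)
\end{equation}
is bi-Lipschitz on the geodesic ball $B_{c_1 r}(z)$, with constants depending only on $d$ and the local geometry. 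This rests on the classical Li--Yau and Gaussian estimates: the parametrix expansion $K_t(\mx,\mx')=(4\pi t)^{-d/2}e^{-d_{\mm}(\mx,\mx')^2/4t}(1+O(t))$ near the diagonal, together with the gradient bound $|\nabla_{\mx}K_t(\mx,\mx')|\lesssim t^{-1/2}K_{2t}(\mx,\mx')$. Differentiating the Gaussian leading term shows that $\nabla_{\mx}K_t(\mx,p_k)$ at $\mx=z$ points essentially in the direction of $\exp_{z}^{-1}(p_k)$ with magnitude $\asymp t^{-d/2-1/2}$, so $dF_t(z)$ has all singular values $\asymp t^{-d/2-1/2}$; the curvature and injectivity-radius bounds make this non-degeneracy persist on all of $B_{c_1r}(z)$.

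Second, I would pass from the heat kernel to finitely many eigenfunctions. Expand $K_t(\mx,p_k)=\sum_{i\ge 0}e^{-\lambda_i t}\psi_i(p_k)\psi_i(\mx)$ and truncate at $\lambda_i\le A/t$ for a large constant $A$. By Weyl's law the number of retained modes is $\asymp (A/t)^{d/2}$, while H\"ormander's eigenfunction bounds $\|\psi_i\|_{\infty}\lesssim\lambda_i^{(d-1)/4}$ and $\|\nabla\psi_i\|_{\infty}\lesssim\lambda_i^{(d+1)/4}$ show that the discarded tail has $C^1(B_{c_1r}(z))$ norm exponentially small in $A$. Hence the truncated map $\tilde F_t$ is a $C^1$-small perturbation of the bi-Lipschitz immersion $F_t$ and is itself bi-Lipschitz on a slightly smaller ball $B_{c_2 r}(z)$.

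Third, extract $d$ eigenfunctions by linear algebra. Write $\tilde F_t=B\circ v$, where $v(\mx)=\big(e^{-\lambda_i t/2}\psi_i(\mx)\big)_{\lambda_i\le A/t}$ and the $d\times N$ matrix $B$ records the numbers $e^{-\lambda_i t/2}\psi_i(p_k)$. Since $\tilde F_t$ is bi-Lipschitz, $Dv(z)$ has rank at least $d$; moreover the weighted covariance $\Sigma:=\sum_{\lambda_i\le A/t}e^{-\lambda_i t}\,\nabla\psi_i(z)\otimes\nabla\psi_i(z)$ agrees, up to the exponentially small tail, with $\nabla_{\mx}\nabla_{\mx'}K_t(\mx,\mx')$ evaluated at $\mx=\mx'=z$, hence $\Sigma\asymp t^{-d/2-1}\,\mathrm{Id}_{T_{z}\mm}$. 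A Cauchy--Binet / volume argument then produces indices $j_1\le\cdots\le j_d\le A/t$ for which the rescaled gradients $e^{-\lambda_{j_k}t/2}\nabla\psi_{j_k}(z)$ have $d\times d$ Gram determinant bounded below. Taking $C_k:=e^{-\lambda_{j_k}t/2}$ and applying the quantitative inverse function theorem --- using elliptic (Schauder) estimates to control $\|\psi_i\|_{C^2}\lesssim\lambda_i^{(d+3)/4}$ --- shows that $\Psi(\mx)=(C_1\psi_{j_1}(\mx),\dots,C_d\psi_{j_d}(\mx))$ is a bi-Lipschitz chart on $B_{c_3 r}(z)$.

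The main obstacle is the quantitative non-degeneracy of the first step combined with the extraction of the third: one must ensure that the lower bound on the singular values of $dF_t$, equivalently on the determinant of the selected $d\times d$ Gram matrix, holds with a constant controlled purely by $d$ and the local curvature and injectivity radius, and that the combinatorial choice of $\{j_1,\dots,j_d\}$ does not destroy this quantitative bound. This is exactly where the Gaussian heat-kernel approximation at the scale $t\asymp r^2$ --- hence the need for a definite amount of room below the injectivity radius and curvature scales --- enters in an essential way.
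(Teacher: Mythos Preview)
The paper does not supply a proof of this theorem: it appears in Appendix~A as part of a review of known embedding results and is simply cited from \cite{jones2008manifold} with the phrase ``The result can be briefly summarized as follows.'' There is therefore no paper-side proof to compare your proposal against.

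That said, your sketch is essentially the strategy of the cited Jones--Maggioni--Schul paper itself: build a local bi-Lipschitz parametrization from heat-kernel values $K_t(\cdot,p_k)$ at well-placed reference points, truncate the spectral expansion of $K_t$ at frequency $\asymp 1/t$ using Weyl/H\"ormander bounds to control the tail in $C^1$, and then extract $d$ individual eigenfunctions whose gradients span $T_z\mm$ with a quantitative lower bound on the Gram determinant. Your identification of the delicate point --- keeping the lower singular-value bound uniform through the combinatorial extraction step, with constants depending only on local geometry --- is exactly where the work in \cite{jones2008manifold} lies. So while there is nothing in the present paper to compare with, your proposal is a faithful outline of the original argument rather than an alternative route.
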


Moreover, the next theorem \cite{portegies2016embeddings} says that we can use the eigenvalues and eigenfunctions of the Laplace-Beltrami operator to construct an almost isometric embedding of the manifold into some Euclidean space.

\begin{thm}\label{almost isometric embedding}(Portegies, \cite{portegies2016embeddings})
{Let $\mm$ be a $d$ dimensional smooth closed Riemannian manifold.  Suppose $\ric_{\mm}  \geq (d-1)k$,  the injectivity radius of $\mm$, $i(\mm) \geq i_0$ and the volume of $\mm$, $\vol(\mm) \leq V$. For any $\epsilon>0$, there is a $\mathbf{t}_0(\epsilon, d , k, i_0)$ such that for $t<\mathbf{t}_0$, there is $C(\epsilon, d ,k, i_0, V, t )$, if $q>C$, then for $\mx \in \mm$
\begin{align}
\Psi(\mx)=(2t)^{\frac{d+2}{4}}\sqrt{2}(4\pi)^{\frac{d}{4}}(e^{-\lambda_1t}\psi_1(\mx), \cdots, e^{-\lambda_qt}\psi_q(\mx)), 
\end{align}
is an embedding of $\mm$ into $\mathbb{R}^q$ such that $1-\epsilon<\|\nabla \Psi\|_{op}<1+\epsilon$. Here $\| \cdot \|_{op}$ is the operator norm.}
\end{thm}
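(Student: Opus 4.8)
The plan is to view $\Psi$ as a finite truncation of the full heat kernel embedding and to control its pullback metric via the short-time asymptotics of the heat kernel, following B\'erard--Besson--Gallot and its quantitative refinement by Portegies. First I would work with the infinite map
\[
\Phi_t(\mx):=(2t)^{\frac{d+2}{4}}\sqrt 2(4\pi)^{\frac d4}\bigl(e^{-\lambda_1 t}\psi_1(\mx),e^{-\lambda_2 t}\psi_2(\mx),\dots\bigr)\in\ell^2,
\]
whose first $q$ coordinates are exactly those of $\Psi$; its pullback of the $\ell^2$ metric at $\mx$ is the symmetric tensor $g_t(\mx)=c_{d,t}\sum_{i\ge 1}e^{-2\lambda_i t}\,\ud\psi_i(\mx)\otimes\ud\psi_i(\mx)$ with $c_{d,t}=2(4\pi)^{d/2}(2t)^{(d+2)/2}$. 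The analytic statement to establish is the expansion $g_t(\mx)=g(\mx)+t\,E_t(\mx)$, where $E_t$ is bounded in the $g$-operator norm uniformly in $\mx$ and in $t<\mathbf t_0$ by a constant depending only on $d,k,i_0,V$.

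To prove the expansion I would use that $\sum_{i\ge 1}e^{-2\lambda_i t}\ud\psi_i\otimes\ud\psi_i$ is $\ud_\mx\otimes\ud_\my$ applied to the heat kernel $p_{2t}(\mx,\my)=\vol(\mm)^{-1}+\sum_{i\ge1}e^{-2\lambda_i t}\psi_i(\mx)\psi_i(\my)$ and then restricted to the diagonal $\my=\mx$. Inserting the Minakshisundaram--Pleijel parametrix $p_s(\mx,\my)\sim(4\pi s)^{-d/2}e^{-d_{\mm}(\mx,\my)^2/4s}\sum_{j\ge0}u_j(\mx,\my)s^j$, differentiating, and passing to the diagonal yields $g(\mx)$ at leading order --- the prefactor $c_{d,t}$ is designed exactly so that the powers of $t$ and the Gaussian Hessian combine to the identity --- plus a remainder of size $O(t)$. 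The crucial point is that the parametrix remainder, the coefficients $u_j$, and a polynomial sup-bound $\|\ud\psi_i\|_\infty\le C\lambda_i^{\alpha_d}$ can all be controlled in terms of the weak geometric data: I would invoke the Li--Yau Gaussian heat kernel and gradient estimates (needing only $\ric_\mm\ge(d-1)k$) together with $i(\mm)\ge i_0$ and $\vol(\mm)\le V$ to localize the parametrix and to bound the count of small eigenvalues. Truncation then follows from Weyl's law with uniform constants, $\lambda_i\ge c(d)(i/V)^{2/d}$: since $c_{d,t}\sum_{i>q}e^{-2\lambda_i t}|\ud\psi_i(\mx)|^2\to0$ as $q\to\infty$ (exponential decay beats polynomial growth), this tail is $<\epsilon/2$ in operator norm once $q>C(\epsilon,d,k,i_0,V,t)$. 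Combining the estimates gives $(1-\epsilon)g\le\Psi^*g_{\bR^q}\le(1+\epsilon)g$, and since $1-\epsilon<\sqrt{1-\epsilon}$ and $\sqrt{1+\epsilon}<1+\epsilon$ this gives $1-\epsilon<\|\nabla\Psi\|_{op}<1+\epsilon$; in particular $\Psi$ is an immersion.

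It remains to upgrade the immersion to a global embedding. Here I would observe that $\|\Phi_t(\mx)-\Phi_t(\my)\|^2$ equals, up to the normalization, $c_{d,t}\bigl(p_{2t}(\mx,\mx)+p_{2t}(\my,\my)-2p_{2t}(\mx,\my)\bigr)=c_{d,t}\sum_{i\ge1}e^{-2\lambda_i t}(\psi_i(\mx)-\psi_i(\my))^2$, which is strictly positive for $\mx\ne\my$ and, by the on-diagonal lower and Gaussian upper heat kernel bounds, is bounded below by a positive function of $d_{\mm}(\mx,\my)$; hence $\Phi_t$ is injective and separates points at distance $\ge t^{1/4}$ by a definite amount. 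Since $\Psi$ is $C^1$-close to $\Phi_t$ by the expansion above, the same separation persists for $\Psi$ on far-apart pairs, while for nearby pairs injectivity follows from $\Psi$ being an immersion with uniformly bounded second fundamental form on geodesic balls of radius comparable to $i_0$. Patching the two regimes shows $\Psi$ is a smooth embedding.

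The main obstacle is the uniformity in the second paragraph: arranging that every constant in the short-time heat kernel expansion --- the parametrix remainder, the coefficients $u_j$, and the eigenfunction sup/gradient bounds --- depends only on $\ric_\mm\ge(d-1)k$, $i(\mm)\ge i_0$, $\vol(\mm)\le V$, rather than on bounds for the full curvature tensor and its covariant derivatives. This is precisely Portegies' contribution; it is achieved by replacing pointwise parametrix estimates with integrated ones and by a precompactness argument in the harmonic-coordinate $C^{1,\alpha}$ topology for the class of manifolds satisfying these bounds.
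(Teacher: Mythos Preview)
The paper does not prove this theorem at all: it is stated in Appendix~A purely as a cited result from \cite{portegies2016embeddings}, alongside the companion results of B\'erard--Besson--Gallot and Jones--Maggioni--Schul, with no argument given. So there is no ``paper's own proof'' to compare against.

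Your sketch is a faithful outline of the actual strategy in the Portegies paper: express the pullback metric of the truncated heat-kernel map via derivatives of $p_{2t}$ on the diagonal, use the Minakshisundaram--Pleijel parametrix to extract the leading term $g$, control the tail by Weyl-type eigenvalue lower bounds, and upgrade to an embedding via the $\ell^2$ distance formula. You also correctly identify the real difficulty, namely the uniformity of all constants under only $\ric \geq (d-1)k$, $i(\mm)\geq i_0$, $\vol(\mm)\leq V$ rather than full curvature-and-derivative bounds; this is indeed the technical heart of Portegies' contribution and is handled there by precompactness/convergence arguments in weak topologies rather than by pointwise parametrix control. One small slip: from $(1-\epsilon)g\le \Psi^*g_{\bR^q}\le(1+\epsilon)g$ you get $\sqrt{1-\epsilon}\le\|\nabla\Psi\|_{op}\le\sqrt{1+\epsilon}$, and then $1-\epsilon<\sqrt{1-\epsilon}$ is false for $\epsilon\in(0,1)$; the clean fix is to run the argument with $\epsilon$ replaced by $\epsilon'$ chosen so that $\sqrt{1\pm\epsilon'}$ lies in $(1-\epsilon,1+\epsilon)$, which only changes the constants $\mathbf t_0$ and $C$.
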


{Based on Theorem \ref{Bates embedding},  the smallest $q$ that 
\begin{align}
\Psi_1(\mx)=(\psi_1(\mx), \cdots,\psi_q(\mx)), 
\end{align}
is a smooth embedding of $\mm$ is called the embedding dimension of $\mm$. Based on Theorem \ref{almost isometric embedding}, the smallest $q$ that 
\begin{align}
\Psi_2(\mx)=(2t)^{\frac{d+2}{4}}\sqrt{2}(4\pi)^{\frac{d}{4}}(e^{-\lambda_1t}\psi_1(\mx), \cdots, e^{-\lambda_qt}\psi_q(\mx)), 
\end{align}
is an almost isometric embedding of $\mm$ is called the almost isometric embedding dimension of $\mm$.} We expect the embedding dimension is much smalled than the almost isometric embedding dimension. Hence, for the dimension reduction purpose, we are looking for an embedding of the manifold rather than an  almost isometric embedding. 

{\section{Proof of Proposition \ref{perpendicular to Voronoi face}}\label{proof local regularity of VFace}

Since $\delta$ is less than the injectivity radius, there is a Euclidean ball $B^{T_{y_i}\nn}_{\delta}(0)$ of radius $\delta$ in the tangent space $T_{y_i}\nn$ of $\nn$ at $\my_i$ such that the exponential map $\exp_{y_i}: B^{T_{y_i}\nn}_{\delta}(0) \rightarrow B_{\delta}(\my_i)$ is a diffeomorphism. Suppose $y_j=\exp_{y_i}(w)$.  We illustrate this setup in Figure \ref{bisector proof figure}. It is sufficient to prove that $\exp_{y_i}^{-1}(B_{\delta}(\my_i) \cap G_{ij})$ is a $d-1$ dimensional submanifold of $T_{y_i}\nn$. For any $v \in \exp_{y_i}^{-1}(B_{\delta}(\my_i) \cap G_{ij})$, by the definition of the bisector, we have
\begin{align}
d^2_{\nn}(\exp_{y_i}(v),\exp_{y_i}(w)) =d^2_{\nn}(\exp_{y_i}(v), y_j)=d^2_{\nn}(\exp_{y_i}(v), y_i)=|v|^2. \label{per vor face 1}
\end{align}
Note that 
\begin{align}
d^2_{\nn}(\exp_{y_i}(v),\exp_{y_i}(w)) =|v-w|^2+f(v,w). \label{per vor face 2}
\end{align}
$f(v,w)$ is a smooth function on $B^{T_{y_i}\nn}_{\delta}(0) \times B^{T_{y_i}\nn}_{\delta}(0)$. In particular, 
\begin{align}
f(v,w)=-\frac{1}{3} R_{y_i}(v,w,v,w)+O((|v|^2+|w|^2)^{\frac{5}{2}})\label{per vor face 3}
\end{align}
for $|v|$ and $|w|$ small, where $R_{y_i}$ is the curvature tensor at $y_i$. Combine \eqref{per vor face 1} and  \eqref{per vor face 2}, we have that 
\begin{align}
|w|^2-2v \cdot w+f(v,w)=0.\label{per vor face 4}
\end{align}

\begin{figure}
\centering
\includegraphics[scale=0.24]{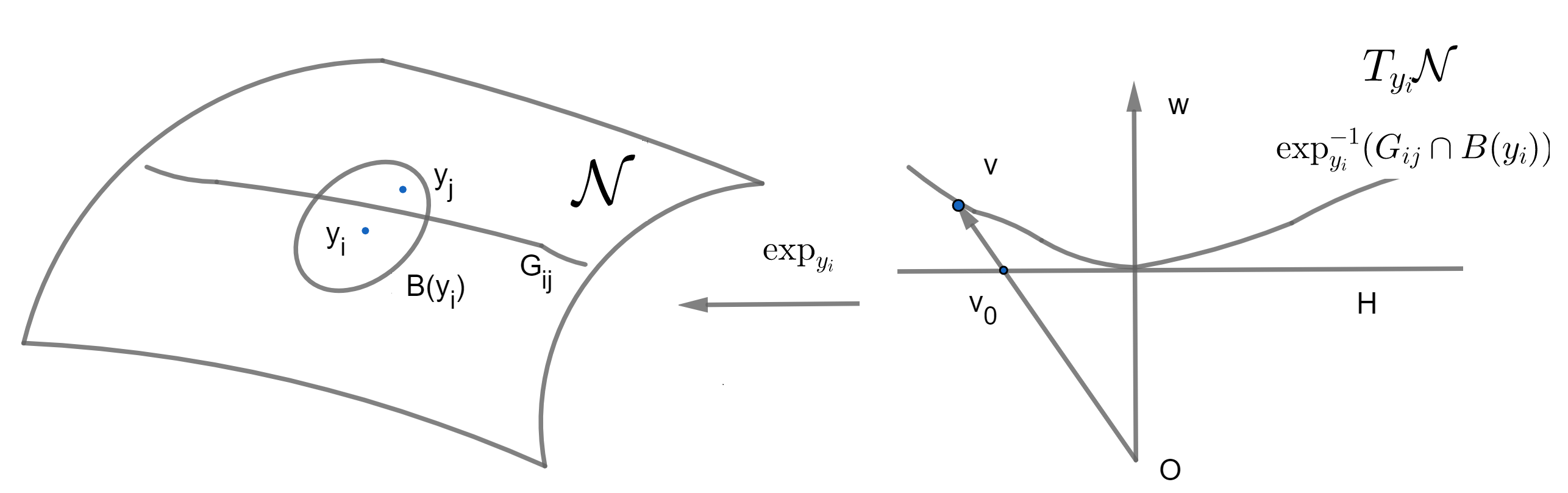}
\caption{An illustration to the proof of Proposition \ref{perpendicular to Voronoi face}}\label{bisector proof figure}
\end{figure}

As $y_j$ is a fixed point,  we treat $w$ as a fixed vector. Therefore, we use the notation $f_w(x)=f(x,w)$ to indicate $f_w$ is a function of $x \in  B^{T_{y_i}\nn}_{\delta}(0)$. Then, we can define a smooth function $F(t,x)$ on $\mathbb{R} \times B^{T_{y_i}\nn}_{\delta}(0)$ as
\begin{align}
F(t,x)=|w|^2-2(1+t)x \cdot w +f_w((1+t)x).
\end{align}
Let $H$ be the $d-1$ dimensional hyperplane that perpendicularly bisects $w$ in $T_{y_i}\nn$. Let $v_0$ be the vector on $H$ so that $v=(1+t_0)v_0$.  By \eqref{per vor face 4}, $F(t_0, v_0)=0$. If we can show that $\frac{\partial F(t_0, v_0)}{\partial t} \not= 0$, then by the Implicit Function Theorem, there is a ball $B$ centered at $v_0$ so that $t=g(x)$ for $x \in B$ and $g$ is differentiable. Hence, $(1+g(x))x$ for $x \in B \cap H$ is a chart for $\exp_{y_i}^{-1}(B_{\delta}(\my_i) \cap G_{ij})$ around $v$. 
We calculate $\frac{\partial F(t_0, v_0)}{\partial t}$:
\begin{align}
\frac{\partial F(t_0, v_0)}{\partial t}=-2v_0 \cdot w+ \nabla f_w(v) \cdot v_0 .
\end{align}
By \eqref{per vor face 3}, $|\nabla f_w(v)|=O(|v||w|^2)$ where the constant depends on the sectional curvatures at $\my_i$. Since the manifold is compact, the sectional curvatures have upper and lower bounds.
Hence, 
\begin{align}
\frac{\partial F(t_0, v_0)}{\partial t}=-2v_0 \cdot w+ \nabla f_w(v) \cdot v_0 <-2|v_0| |w| \cos(\theta)+|v_0|O(|v||w|^2), 
\end{align}
where $\theta$ is the angle between $v_0$ and $w$. Since $v_0 \in B^{T_{y_i}\nn}_{\delta}(0) \cap H$, $\cos(\theta)>\frac{|w|}{2\delta}$. $v \in B^{T_{y_i}\nn}_{\delta}(0)$, so $|v|<\delta$. Therefore, when $\delta$ is small enough,
\begin{align}
\frac{\partial F(t_0, v_0)}{\partial t} <|v_0| |w|^2 (-\frac{1}{\delta}+O(\delta))<0. 
\end{align}

Next, we prove the second part of the proposition. $\my^* \in M_{ij}$ follows from the construction. Note that by the triangle inequality and the definition of the bisector, the geodesic sphere centered at $\my_i$ through $\my^*$ is tangent to $M_{ij}$ at $\my^*$. Hence, by Gauss's Lemma, the minimizing geodesic between $y_i$ and $y_j$ is perpendicular to $M_{ij}$ at $\my^*$.
}

\section{Proof of Proposition \ref{approximation of the volume of a Voronoi cells} and Proposition \ref{approximation of the area of a Voronoi face} }\label{appendix B}
We start from a study of the matrix $C_{n,r}(\my_k)$ in  Definition \ref{definition of the map iota} and relate it to its continuous form. 
{Consider the local covariance matrix  $C_{\my_k,B^{\mathbb{R}^\ell}_{\sqrt{r}}(\my_k) \cap \nn}$ defined as follows.
\begin{align} 
C_{\my_k,B^{\mathbb{R}^\ell}_{\sqrt{r}}(\my_k) \cap \nn}= & \int_{B^{\mathbb{R}^\ell}_{\sqrt{r}}(\my_k) \cap \nn}(\my-\my_k)(\my-\my_k)^{\top} \rho^{**} (\my) dV_{\nn}(\my) \in\mathbb{R}^{\ell \times \ell}. 
\end{align}
}
 Suppose $C_{\my_k,B^{\mathbb{R}^\ell}_{\sqrt{r}}(\my_k) \cap \nn}$ has the following eigendecomposition:
\begin{align}
C_{\my_k,B^{\mathbb{R}^\ell}_{\sqrt{r}}(\my_k) \cap \nn}=U(\my_k) \Lambda(\my_k) U(\my_k) ^\top  \in O(\ell),
\end{align}
where $\Lambda(\my_k)$ is a diagonal matrix with the diagonal entries to be eigenvalues of $C_{\my_k,B^{\mathbb{R}^\ell}_{\sqrt{r}}(\my_k) \cap \nn}$. Moreover, we have $\Lambda_{11}(\my_k) \geq \Lambda_{22}(\my_k) \geq \cdots \geq \Lambda_{\ell \ell}(\my_k)$. $U(\my_k) \in O(\ell)$ consists of the corresponding orthonormal eigenvectors of $C_{\my_k,B^{\mathbb{R}^\ell}_{\sqrt{r}}(\my_k) \cap \nn}$. Intuitively, $C_{\my_k,B^{\mathbb{R}^\ell}_{\sqrt{r}}(\my_k) \cap \nn}$ is the continuous form of the matrix $C_{n,r}(\my_k)$.

By setting $\epsilon=\sqrt{r}$ in Proposition 3.2 in \cite{wu2018think}, we have the following lemma.
\begin{lem}\label{A Lemma1}
Assume that $T_{\my_k} \nn$ is generated by the first $d$ standard basis of $\mathbb{R}^\ell$.
\begin{align}
\Lambda(\my_k)&=\frac{|S^{d-1}| P(\my_k) r^{\frac{d+2}{2}}}{d(d+2)} \Big( \begin{bmatrix}
I_{d\times d} & 0 \\
0 & 0 \\
\end{bmatrix}+O(r) \Big), \\
U(\my_k)&= \begin{bmatrix}
X_1 & 0 \\
0 & X_2\\
\end{bmatrix}+O(r),
\end{align}
where $X_1 \in O(d)$ and $X_2 \in O(\ell-d)$.
\end{lem}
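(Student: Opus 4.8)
The plan is to compute $C_{\my_k,B^{\mathbb{R}^\ell}_{\sqrt r}(\my_k)\cap\nn}$ directly, by introducing a graph parametrization of $\nn$ over its tangent plane at $\my_k$ and Taylor-expanding the integrand in the radius $\sqrt r$. Since by hypothesis $T_{\my_k}\nn=\mathrm{span}\{e_1,\dots,e_d\}$, a neighborhood of $\my_k$ in $\nn$ can be written as $\my=\my_k+(u,\varphi(u))$ with $u\in\mathbb{R}^d$, $|u|$ small, and $\varphi\colon\mathbb{R}^d\to\mathbb{R}^{\ell-d}$ smooth, $\varphi(0)=0$, $D\varphi(0)=0$; in fact $\varphi(u)=\tfrac12\,\mathrm{II}_{\my_k}(u,u)+O(|u|^3)$, where $\mathrm{II}_{\my_k}$ is the second fundamental form at $\my_k$. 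In these coordinates $\|\my-\my_k\|_{\mathbb{R}^\ell}^2=|u|^2+|\varphi(u)|^2=|u|^2+O(|u|^4)$, the volume form is $\ud V_\nn=\sqrt{\det g}\,\ud u=(1+O(|u|^2))\,\ud u$, and $\rho^{**}(\my)=P(\my_k)+O(|u|)$. Hence $B^{\mathbb{R}^\ell}_{\sqrt r}(\my_k)\cap\nn$ corresponds, in the $u$-variable, to $B^{\mathbb{R}^d}_{\sqrt r}(0)$ up to a shell of relative width $O(r)$.

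\textbf{The matrix expansion.} I would split $C_{\my_k,\dots}$ into the tangential $d\times d$ block, the normal $(\ell-d)\times(\ell-d)$ block, and the off-diagonal block. For the tangential block, $\int u_iu_j\,\rho^{**}\,\ud V_\nn$ has leading term $P(\my_k)\int_{B^{\mathbb{R}^d}_{\sqrt r}(0)}u_iu_j\,\ud u=P(\my_k)\,\delta_{ij}\,\frac{|S^{d-1}|}{d(d+2)}r^{(d+2)/2}$; the point to watch is that the would-be $O(\sqrt r)$ corrections (from the linear Taylor term of $\rho^{**}$ and from the boundary shell) integrate to zero by oddness of $u_iu_ju_k$ over a ball, so the error is $O(r)$ relative, coming from the $O(|u|^2)$ corrections of $\rho^{**}$, $\sqrt{\det g}$ and $\varphi$. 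The normal block $\int\varphi(u)\varphi(u)^\top\rho^{**}\,\ud V_\nn=O(r^{(d+4)/2})$ since $\varphi=O(|u|^2)$, and the off-diagonal block $\int u\,\varphi(u)^\top\rho^{**}\,\ud V_\nn=O(r^{(d+4)/2})$ as well, its leading term $\int u_i\,\mathrm{II}_{ab}(u,u)\,\ud u$ being a cubic odd integral that vanishes. Collecting,
\[
C_{\my_k,B^{\mathbb{R}^\ell}_{\sqrt r}(\my_k)\cap\nn}=\frac{|S^{d-1}|P(\my_k)r^{(d+2)/2}}{d(d+2)}\left(\begin{bmatrix}I_{d\times d}&0\\0&0\end{bmatrix}+O(r)\right),
\]
and diagonalizing gives the stated form for $\Lambda(\my_k)$: by Weyl's inequality the top $d$ eigenvalues are $\frac{|S^{d-1}|P(\my_k)r^{(d+2)/2}}{d(d+2)}(1+O(r))$ and the remaining ones are $O(r^{(d+2)/2}\cdot r)$.

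\textbf{The eigenvector block structure.} Write $C=c(A_0+E)$ with $A_0=\mathrm{diag}(I_d,0)$, $c=\frac{|S^{d-1}|P(\my_k)r^{(d+2)/2}}{d(d+2)}$, $\|E\|=O(r)$. The spectral gap between the top $d$ and the bottom $\ell-d$ eigenvalues of $C$ is $\ge c(1-O(r))$, so the Davis–Kahan $\sin\theta$ theorem bounds the principal angle between $\mathrm{span}$ of the top $d$ eigenvectors of $C$ and $\mathrm{span}\{e_1,\dots,e_d\}$ (and likewise for their orthogonal complements) by $O(r)$. One can therefore choose the orthonormal eigenvector matrix so that $U(\my_k)=\begin{bmatrix}X_1&0\\0&X_2\end{bmatrix}+O(r)$ with $X_1\in O(d)$, $X_2\in O(\ell-d)$, the blocks $X_1,X_2$ absorbing the residual rotations inside each (nearly degenerate) eigenspace. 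Finally, since $\nn$ is compact and $\rho^{**}$ is bounded below with bounded $C^2$ norm, the second fundamental form, the curvature, and the relevant norms of $\rho^{**}$ are uniformly controlled, so all the $O(\cdot)$ above are uniform in $\my_k$.

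\textbf{Main obstacle.} The only genuinely delicate bookkeeping is controlling the mismatch between the extrinsic ball $B^{\mathbb{R}^\ell}_{\sqrt r}(\my_k)\cap\nn$ and the flat disk $B^{\mathbb{R}^d}_{\sqrt r}(0)$ and checking that, together with the linear Taylor terms, every potential $O(\sqrt r)$ contribution cancels by parity, so that the error is genuinely $O(r)$ rather than $O(\sqrt r)$; once this is in hand the rest is routine perturbation theory. Alternatively, as the excerpt notes, one may simply invoke Proposition 3.2 of \cite{wu2018think} with $\epsilon=\sqrt r$, which packages exactly this computation.
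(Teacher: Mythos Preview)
The paper does not give a proof of this lemma at all: it simply states ``By setting $\epsilon=\sqrt{r}$ in Proposition~3.2 in \cite{wu2018think}, we have the following lemma.'' Your proposal is therefore strictly more informative than the paper's treatment, since you actually sketch the underlying argument rather than deferring to a citation; and you correctly note in your last sentence that this is exactly what the cited proposition packages.

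Your outline is sound. The graph parametrization over $T_{\my_k}\nn$, the moment computation for the tangential block, and the bounds on the normal and off-diagonal blocks are all correct, and the parity argument disposing of the would-be $O(\sqrt r)$ corrections is the right observation: the linear Taylor term of $\rho^{**}$ contributes an odd cubic integrand, and the mismatch between the extrinsic $\sqrt r$-ball and the flat $\sqrt r$-disk is a shell of relative width $O(r)$ (since $|u|^2+|\varphi(u)|^2<r$ forces $|u|<\sqrt r(1+O(r))$), which already gives a relative $O(r)$ error without needing any cancellation. The appeal to Weyl for the eigenvalues and Davis--Kahan for the eigenspace alignment is the standard and correct way to pass from the matrix expansion to the stated forms of $\Lambda(\my_k)$ and $U(\my_k)$.
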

Above lemma says that the first $d$ eigenvectors of  $C_{\my_k,B^{\mathbb{R}^\ell}_{\sqrt{r}}(\my_k) \cap \nn}$  form an orthonormal basis of $T_{\my_k} \nn$ up to an error of order $O(r)$. Note that, for simplicity, we assume $T_{\my_k} \nn$ is generated by the first $d$ standard basis of $\mathbb{R}^\ell$ so that $U(\my_k)$ can be expressed in the above block form.
Suppose $C_{n,r}(\my_k)$ has the following eigendecomposition:
\begin{align}
C_{n,r}(\my_k)=U_n(\my_k) \Lambda_n(\my_k) U_n(\my_k) ^\top.
\end{align}
$ \Lambda_n(\my_k)$ is a diagonal matrix with the diagonal entries to be eigenvalues of $C_{n,r}(\my_k)$. Moreover, we have $\Lambda_{n,11}(\my_k) \geq \Lambda_{n,22}(\my_k) \geq \cdots \geq \Lambda_{n, \ell \ell}(\my_k)$. $U_n(\my_k) \in O(\ell)$ consists of the corresponding orthonormal eigenvectors of $C_{n,r}(\my_k)$.

The relation between the eigenstructure of $C_{\my_k,B^{\mathbb{R}^\ell}_{\sqrt{r}}(\my_k) \cap \nn}$ and $C_{n,r}(\my_k)$ is discussed in  Lemma E.4 in \cite{wu2018think}. 

\begin{lem}\label{A Lemma2}
Assume that $T_{\my_k} \nn$ is generated by the first $d$ standard basis of $\mathbb{R}^\ell$. When $n$ is large enough, with probability greater than $1-\frac{1}{n^2}$, for all $\my_k$,
\begin{align}
\Lambda_n(\my_k) & =\Lambda(\my_k)+O(\sqrt{\frac{\log n}{n r^{-\frac{d}{2}-2}}}), \\
U_n(\my_k)&= \begin{bmatrix}
X'_1 & 0 \\
0 & X'_2\\
\end{bmatrix}U(\my_k)+O(\sqrt{\frac{\log n}{n r^{\frac{d}{2}-2}}}),
\end{align}
where $X'_1 \in O(d)$ and $X'_2 \in O(\ell-d)$.
\end{lem}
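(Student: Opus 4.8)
The statement is a perturbation result, and the plan is to prove it along the lines of Lemma~E.4 in \cite{wu2018think} (with the substitution $\epsilon=\sqrt r$): combine a \emph{uniform} matrix concentration estimate for $C_{n,r}(\my_k)$ with standard symmetric‑eigendecomposition perturbation theory, the eigengap being supplied by Lemma \ref{A Lemma1}.

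First I would write the discrete covariance as a normalized sum of i.i.d.\ bounded random matrices: conditioning on the base point $\my_k$,
\[
C_{n,r}(\my_k)=\frac1n\sum_{j=1}^n Z_j,\qquad Z_j:=(\my_j-\my_k)(\my_j-\my_k)^\top\,\mathbf 1_{\{\|\my_j-\my_k\|_{\mathbb{R}^\ell}\le\sqrt r\}},
\]
the $j=k$ term being zero. Then $\|Z_j\|_{op}\le r$, and since $\nn$ is $d$-dimensional and $\rho^{**}$ is bounded above, $\|\mathbb E[Z_j^2]\|_{op}\le \mathbb E[\|\my_j-\my_k\|^4\mathbf 1]=O(r^{d/2+2})$. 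Taking expectations gives $\mathbb E[C_{n,r}(\my_k)]=\tfrac{n-1}{n}\,C_{\my_k,B^{\mathbb{R}^\ell}_{\sqrt r}(\my_k)\cap\nn}$, which differs from $C_{\my_k,B^{\mathbb{R}^\ell}_{\sqrt r}(\my_k)\cap\nn}$ only by $O(r^{d/2+2}/n)$, negligible below. Applying a matrix concentration inequality (e.g.\ matrix Bernstein) to $\tfrac1n\sum_j(Z_j-\mathbb E Z_j)$, and then a union bound over the $n$ base points $\{\my_k\}$ — this is where the $\log n$ enters, and where Assumption \ref{assumption on voronoi cell} guarantees the variance term dominates the boundedness term — yields, with probability at least $1-n^{-2}$ and uniformly in $k$,
\[
\big\|C_{n,r}(\my_k)-C_{\my_k,B^{\mathbb{R}^\ell}_{\sqrt r}(\my_k)\cap\nn}\big\|_{op}=O\!\left(\sqrt{\tfrac{\log n}{n\,r^{-d/2-2}}}\right),
\]
and one can additionally exploit that the normal component of $\my_j-\my_k$ is quadratically small in the geodesic distance to show that the tangent–normal off-diagonal block of this difference is smaller still.

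From here the two claims split. For the eigenvalues, Weyl's inequality immediately gives $\Lambda_n(\my_k)=\Lambda(\my_k)+O(\|C_{n,r}(\my_k)-C_{\my_k,\ldots}\|_{op})$, which is the first display. For the eigenvectors, the crucial input is the spectral gap: by Lemma \ref{A Lemma1} the top $d$ eigenvalues of $C_{\my_k,B^{\mathbb{R}^\ell}_{\sqrt r}(\my_k)\cap\nn}$ are of order $r^{(d+2)/2}$ while the remaining $\ell-d$ are $O(r^{(d+4)/2})$, so the gap between the $d$-th and $(d+1)$-th eigenvalue is of order $r^{(d+2)/2}$. Applying the Davis–Kahan $\sin\Theta$ theorem (in the form controlled by the off-diagonal block of the perturbation) to the spectral projection onto the top $d$ eigenvectors converts the above concentration bound into a bound on the orthogonal transformations needed to align the top-$d$ and bottom-$(\ell-d)$ eigenframes of $C_{n,r}(\my_k)$ with those of $C_{\my_k,\ldots}$; together with the block form of $U(\my_k)$ from Lemma \ref{A Lemma1} this yields the stated expression for $U_n(\my_k)$ with $X_1'\in O(d)$ and $X_2'\in O(\ell-d)$.

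The hard part will be the eigenvector estimate, precisely because the relevant spectral gap itself collapses like $r^{(d+2)/2}$ as $r\to 0$: Davis–Kahan is only useful if the sampling fluctuation of $C_{n,r}(\my_k)$ is comfortably smaller than this gap, and quantifying that ratio is exactly why Assumption \ref{assumption on voronoi cell} imposes $nr^{d/2}/\log n\to\infty$. The other point requiring care is uniformity in $k$: to keep all implied constants independent of $\my_k$ one uses the positive lower bound on $\rho^{**}$, the compactness of $\nn$, and the curvature bounds that control the $O(r)$ remainders appearing in Lemma \ref{A Lemma1}.
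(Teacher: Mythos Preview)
Your proposal is correct and aligns with the paper's own treatment: the paper does not give an independent proof but simply invokes Lemma~E.4 of \cite{wu2018think} with the substitution $\epsilon=\sqrt r$ (and $\rho\to\infty$ in Case~0 there), which is precisely the reference and substitution you start from. Your sketch of the underlying argument---matrix Bernstein for the uniform concentration of $C_{n,r}(\my_k)$, Weyl for the eigenvalue bound, and the off-diagonal-block form of Davis--Kahan together with the $r^{(d+2)/2}$ gap from Lemma~\ref{A Lemma1} for the eigenvector bound---is exactly the machinery behind that cited lemma, so you are reproducing the same approach rather than taking a different route.
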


\begin{rem}
Above lemma follows from Lemma E.4 in \cite{wu2018think} if we choose $\epsilon=\sqrt{r}$ and $\rho \rightarrow \infty$ in Case 0 of Lemma E.4 in \cite{wu2018think}. In fact, Case 0 of Lemma E.4 in \cite{wu2018think} focuses on the first $d$ eigenpairs of the matrix $C_{\my_k,B^{\mathbb{R}^\ell}_{\sqrt{r}}(\my_k) \cap \nn}$ of which we need to recover.
\end{rem}

If we combine Lemma \ref{A Lemma1} and Lemma \ref{A Lemma2}, we have 
\begin{align}
\Lambda_n(\my_k) & =\frac{|S^{d-1}| P(\my_k) r^{\frac{d+2}{2}}}{d(d+2)} \begin{bmatrix}
I_{d\times d} & 0 \\
0 & 0 \\
\end{bmatrix}+O(r^{\frac{d}{2}+2})+O(\sqrt{\frac{\log n}{n r^{-\frac{d}{2}-2}}}), \\
U_n(\my_k)&= \begin{bmatrix}
U_1 & 0 \\
0 & U_2\\
\end{bmatrix}+O(r)+O(\sqrt{\frac{\log n}{n r^{\frac{d}{2}-2}}}),
\end{align}
where $U_1 \in O(d)$ and $U_2 \in O(\ell-d)$. If $\frac{nr^{\frac{d}{2}}}{\log n} \rightarrow \infty$ as $n \rightarrow \infty$, then $\sqrt{\frac{\log n}{n r^{\frac{d}{2}-2}}} \leq r$. If $\frac{nr^{\frac{d}{2}+2}}{\log n} \rightarrow \infty$ as $n \rightarrow \infty$, then $\sqrt{\frac{\log n}{n r^{\frac{d}{2}-2}}} \leq r$ and $\sqrt{\frac{\log n}{n r^{-\frac{d}{2}-2}}} \leq r^{\frac{d}{2}+2}$. Hence, we have the following proposition.

\begin{prop}\label{structure of discrete PCA}
Assume that $T_{\my_k} \nn$ is generated by the first $d$ standard basis of $\mathbb{R}^\ell$. If $\frac{nr^{\frac{d}{2}}}{\log n} \rightarrow \infty$ as $n \rightarrow \infty$, then  with probability greater than $1-\frac{1}{n^2}$, for all $\my_k$,
\begin{align}
U_n(\my_k)&= \begin{bmatrix}
U_1 & 0 \\
0 & U_2\\
\end{bmatrix}+O(r),
\end{align}
where $U_1 \in O(d)$ and $U_2 \in O(\ell-d)$.

If $\frac{nr^{\frac{d}{2}+2}}{\log n} \rightarrow \infty$ as $n \rightarrow \infty$, then  with probability greater than $1-\frac{1}{n^2}$, for all $\my_k$,
\begin{align}
\Lambda_n(\my_k) & =\frac{|S^{d-1}| P(\my_k) r^{\frac{d+2}{2}}}{d(d+2)} \begin{bmatrix}
I_{d\times d} & 0 \\
0 & 0 \\
\end{bmatrix}+O(r^{\frac{d}{2}+2}), \\
U_n(\my_k)&= \begin{bmatrix}
U_1 & 0 \\
0 & U_2\\
\end{bmatrix}+O(r),
\end{align}
where $U_1 \in O(d)$ and $U_2 \in O(\ell-d)$.
\end{prop}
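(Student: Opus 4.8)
The plan is to deduce Proposition~\ref{structure of discrete PCA} directly from Lemma~\ref{A Lemma1} and Lemma~\ref{A Lemma2}, so that the only substantive work is the bookkeeping of the error terms produced when the deterministic asymptotics of the continuous covariance matrix $C_{\my_k,B^{\mathbb{R}^\ell}_{\sqrt{r}}(\my_k)\cap\nn}$ are composed with the concentration estimate for the discrete matrix $C_{n,r}(\my_k)$. As in both lemmas, I keep the harmless normalization that $T_{\my_k}\nn$ is spanned by the first $d$ coordinate axes of $\mathbb{R}^\ell$, which is what makes the block decompositions meaningful.

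First I would substitute the expressions for $\Lambda(\my_k)$ and $U(\my_k)$ from Lemma~\ref{A Lemma1} into the perturbation identities of Lemma~\ref{A Lemma2}. For the eigenvalues this produces $\Lambda_n(\my_k)=\frac{|S^{d-1}|P(\my_k)r^{\frac{d+2}{2}}}{d(d+2)}\,\mathrm{diag}(I_{d\times d},0)+O(r^{\frac{d}{2}+2})+O(\sqrt{\log n/(nr^{-\frac{d}{2}-2})})$, the $O(r^{\frac{d}{2}+2})$ term absorbing the $r^{\frac{d+2}{2}}\cdot O(r)$ contribution coming from the $O(r)$ inside Lemma~\ref{A Lemma1}. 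For the eigenvectors I would use two elementary observations: a block-diagonal orthogonal matrix has unit operator norm, hence propagates the $O(r)$ perturbation of $U(\my_k)$ unchanged; and the product $\mathrm{diag}(X_1',X_2')\,\mathrm{diag}(X_1,X_2)=\mathrm{diag}(X_1'X_1,\,X_2'X_2)$ is again block-diagonal orthogonal with $X_1'X_1\in O(d)$ and $X_2'X_2\in O(\ell-d)$. Setting $U_1:=X_1'X_1$ and $U_2:=X_2'X_2$ then gives $U_n(\my_k)=\mathrm{diag}(U_1,U_2)+O(r)+O(\sqrt{\log n/(nr^{\frac{d}{2}-2})})$. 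Both identities hold simultaneously for all $\my_k$ on the single event of probability $>1-1/n^2$ furnished by Lemma~\ref{A Lemma2}, Lemma~\ref{A Lemma1} being deterministic.

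The remaining step is arithmetic: compare the two sampling errors with $r$ and with $r^{\frac{d}{2}+2}$. One checks that $\sqrt{\log n/(nr^{\frac{d}{2}-2})}\le r$ is equivalent to $\log n\le nr^{\frac{d}{2}}$, so the first hypothesis $nr^{\frac{d}{2}}/\log n\to\infty$ forces the eigenvector error down to $O(r)$, which is exactly the first assertion. The stronger hypothesis $nr^{\frac{d}{2}+2}/\log n\to\infty$ is equivalent to $\log n\le nr^{\frac{d}{2}+2}$, which in turn implies both $\sqrt{\log n/(nr^{\frac{d}{2}-2})}\le r$ and $\sqrt{\log n/(nr^{-\frac{d}{2}-2})}\le r^{\frac{d}{2}+2}$; feeding these back into the two identities collapses the eigenvalue estimate to the claimed $O(r^{\frac{d}{2}+2})$ and the eigenvector estimate to $O(r)$, completing the proof.

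I do not expect a genuine obstacle, since the statement is essentially a corollary of the two cited lemmas; the points needing a little care are (i) that multiplication by a block-diagonal orthogonal matrix preserves the block structure, so that the limiting blocks $U_1\in O(d)$, $U_2\in O(\ell-d)$ can legitimately be extracted, and (ii) that the elementary inequalities between the error terms are matched to the correct hypotheses on $nr^{\frac{d}{2}}/\log n$ and $nr^{\frac{d}{2}+2}/\log n$. It is worth stating explicitly that the constants hidden in all the $O(\cdot)$ depend only on the dimension, the curvature and second fundamental form of $\nn$, and the upper and lower bounds of the sampling density $\rho^{**}$ (entering through $P(\my_k)$), and in particular are uniform in $k$ --- which is what legitimizes the ``for all $\my_k$'' conclusion.
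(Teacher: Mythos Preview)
Your proposal is correct and follows essentially the same route as the paper: the proposition is presented there as an immediate corollary of Lemma~\ref{A Lemma1} and Lemma~\ref{A Lemma2}, obtained by substituting the former into the latter and then absorbing the sampling error $\sqrt{\log n/(nr^{\frac{d}{2}-2})}$ (respectively $\sqrt{\log n/(nr^{-\frac{d}{2}-2})}$) into $O(r)$ (respectively $O(r^{\frac{d}{2}+2})$) under the two stated growth hypotheses. Your write-up is in fact slightly more explicit than the paper's, spelling out the closure of block-diagonal orthogonal matrices under multiplication and the equivalences behind the error comparisons, but there is no difference in strategy.
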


Above proposition should be understood in the following way. If  $n$ and $r$ satisfy $\frac{nr^{\frac{d}{2}}}{\log n} \rightarrow \infty$ as $n \rightarrow \infty,$ then we have an approximation of the tangent space of $\nn$ at $\my_k$, i.e. the first $d$ eigenvectors of $C_{n,r}(\my_k)$ are the basis of $T_{\my_k} \nn$ up to an error of order $O(r)$.  If $n$ and $r$ satisfy $\frac{nr^{\frac{d}{2}+2}}{\log n} \rightarrow \infty$ as $n \rightarrow \infty$, the first $d$ eigenvectors of $C_{n,r}(\my_k)$ are the basis of $T_{\my_k} \nn$ up to an error of order $O(r)$. Moreover, there are $d$ significantly large eigenvalues of $C_{n,r}(\my_k)$ which are  close to the first $d$ eigenvalues of  $C_{\my_k,B^{\mathbb{R}^\ell}_{\sqrt{r}}(\my_k) \cap \nn}$.

Next, we show that the map $\tilde{\iota}_k$ in the definition \ref{definition of the map iota} restricted on $ B^{\mathbb{R}^\ell}_{r}(\my_k) \cap \nn$ is a $1+O(r)$ bi-Lipschitz homeomorphism.
\begin{lem}\label{bi lip of iota}
Suppose $r \rightarrow 0$ and $\frac{nr^{\frac{d}{2}}}{\log n} \rightarrow \infty$ as $n \rightarrow \infty$. Suppose $r$ is small enough, then  with probability greater than $1-\frac{1}{n^2}$, for all $\my_k$ and any $\my, \my' \in B^{\mathbb{R}^\ell}_{r}(\my_k) \cap \nn$, we have 
\begin{align}
\|\tilde{\iota}_k(\my')-\tilde{\iota}_k(\my)\|_{\mathbb{R}^d}=\|\iota_k(\my'-\my)\|_{\mathbb{R}^d}=d_\nn(\my,\my')(1+O(r)).
\end{align}
\end{lem}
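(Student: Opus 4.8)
The plan is to show that $\tilde\iota_k$ restricted to $B^{\mathbb{R}^\ell}_r(\my_k)\cap\nn$ is a $(1+O(r))$ bi-Lipschitz homeomorphism by composing two comparisons: first comparing $d_\nn$ with the ambient Euclidean distance $\|\cdot\|_{\mathbb{R}^\ell}$ via Lemma \ref{geodesic vs euclidean}, and then comparing $\|\my'-\my\|_{\mathbb{R}^\ell}$ with $\|\iota_k(\my'-\my)\|_{\mathbb{R}^d}$, where $\iota_k$ is the orthogonal projection onto the span of the first $d$ eigenvectors $\{\beta_{n,r,1},\dots,\beta_{n,r,d}\}$ of the discrete covariance matrix $C_{n,r}(\my_k)$. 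The only content beyond Lemma \ref{geodesic vs euclidean} is this second comparison, which reduces to showing that the $d$-plane $V_k := \mathrm{span}\{\beta_{n,r,i}\}$ is $O(r)$-close (in the sense of principal angles, or equivalently in operator norm of the difference of orthogonal projections) to the tangent plane $T_{\my_k}\nn$. That closeness is exactly the content of Proposition \ref{structure of discrete PCA}: under the hypothesis $\frac{nr^{d/2}}{\log n}\to\infty$, with probability greater than $1-\frac1{n^2}$ and simultaneously for all $\my_k$, the matrix $U_n(\my_k)$ of eigenvectors of $C_{n,r}(\my_k)$ has the block form $\mathrm{diag}(U_1,U_2)+O(r)$ once $T_{\my_k}\nn$ is identified with the first $d$ coordinate axes; hence the orthogonal projection $\Pi_{V_k}$ onto $V_k$ satisfies $\|\Pi_{V_k}-\Pi_{T_{\my_k}\nn}\|_{\mathrm{op}}=O(r)$.

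Concretely, I would proceed as follows. Fix $\my_k$ and identify $T_{\my_k}\nn$ with $\mathbb{R}^d\times\{0\}\subset\mathbb{R}^\ell$. For $\my,\my'\in B^{\mathbb{R}^\ell}_r(\my_k)\cap\nn$ write $w := \my'-\my\in\mathbb{R}^\ell$, so $\|w\|_{\mathbb{R}^\ell}=O(r)$ (using \eqref{euclidean vs geodesic} and $r\to0$, or directly). By the second fundamental form expansion of the embedding $\nn\hookrightarrow\mathbb{R}^\ell$ near $\my_k$, the chord $w$ decomposes as $w = w^\top + w^\perp$ with $w^\top\in T_{\my_k}\nn$ and $\|w^\perp\|_{\mathbb{R}^\ell}=O(\|w\|_{\mathbb{R}^\ell}^2)=O(r^2)$; thus $\|w\|_{\mathbb{R}^\ell} = \|w^\top\|_{\mathbb{R}^\ell}(1+O(r^2))$. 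Now $\iota_k$ is an isometry from $V_k$ onto $\mathbb{R}^d$, so $\|\iota_k(w)\|_{\mathbb{R}^d} = \|\Pi_{V_k}w\|_{\mathbb{R}^\ell}$. Using $\|\Pi_{V_k}-\Pi_{T_{\my_k}\nn}\|_{\mathrm{op}}=O(r)$ from Proposition \ref{structure of discrete PCA} and $\|w\|_{\mathbb{R}^\ell}=O(r)$, we get $\big|\,\|\Pi_{V_k}w\|_{\mathbb{R}^\ell}-\|\Pi_{T_{\my_k}\nn}w\|_{\mathbb{R}^\ell}\,\big| \le \|\Pi_{V_k}w-\Pi_{T_{\my_k}\nn}w\|_{\mathbb{R}^\ell} = O(r)\,\|w\|_{\mathbb{R}^\ell} = O(r)\,\|w^\top\|_{\mathbb{R}^\ell}(1+O(r^2))$, while $\|\Pi_{T_{\my_k}\nn}w\|_{\mathbb{R}^\ell}=\|w^\top\|_{\mathbb{R}^\ell}$. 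Combining the last three displays,
\begin{align}
\|\iota_k(w)\|_{\mathbb{R}^d} = \|w^\top\|_{\mathbb{R}^\ell}\,(1+O(r)) = \|w\|_{\mathbb{R}^\ell}\,(1+O(r)) = \|\my'-\my\|_{\mathbb{R}^\ell}\,(1+O(r)).
\end{align}
Finally Lemma \ref{geodesic vs euclidean} gives $\|\my'-\my\|_{\mathbb{R}^\ell}=d_\nn(\my,\my')(1+O(d_\nn(\my,\my')^2))=d_\nn(\my,\my')(1+O(r^2))$, so $\|\tilde\iota_k(\my')-\tilde\iota_k(\my)\|_{\mathbb{R}^d} = d_\nn(\my,\my')(1+O(r))$, which is the claim; the uniformity over all $\my_k$ and the probability $1-\frac1{n^2}$ are inherited directly from Proposition \ref{structure of discrete PCA} (and the deterministic geometric estimates hold for every $\my_k$ once $r$ is small, depending only on curvature bounds of $\nn$).

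The main obstacle is purely bookkeeping: making sure the $O(r)$ error from the PCA eigenvector perturbation, the $O(r^2)$ error from the second fundamental form, and the $O(r^2)$ error from the geodesic-versus-chord comparison are all applied to quantities of the right size ($\|w\|_{\mathbb{R}^\ell}=O(r)$) so that the multiplicative error stays $1+O(r)$ rather than, say, $1+O(1)$; and confirming that the $\frac{nr^{d/2}}{\log n}\to\infty$ hypothesis (rather than the stronger $\frac{nr^{d/2+2}}{\log n}\to\infty$) is exactly what Proposition \ref{structure of discrete PCA} needs for the eigenvector statement. One should also note at the outset that $\iota_k$ being a linear isometry onto its image means it is automatically injective on $V_k$; combined with the bi-Lipschitz estimate just derived, $\tilde\iota_k$ is injective on $B^{\mathbb{R}^\ell}_r(\my_k)\cap\nn$, hence a homeomorphism onto its image, completing the ``bi-Lipschitz homeomorphism'' assertion.
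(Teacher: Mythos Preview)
Your proposal is correct and follows essentially the same route as the paper: decompose the chord $w=\my'-\my$ into components tangential and normal to $T_{\my_k}\nn$, invoke Proposition \ref{structure of discrete PCA} (only the eigenvector half, which indeed needs just $\frac{nr^{d/2}}{\log n}\to\infty$) to show the PCA plane is $O(r)$-close to the tangent plane, and close with Lemma \ref{geodesic vs euclidean}. The only fix needed is the normal-component bound: for a \emph{difference} of two points in the $r$-ball one has $\|w^\perp\|=O(r)\,\|w^\top\|$ (the paper's estimate $\|v_1^\perp-v_2^\perp\|\le\mathcal{C}_1 r\,\|v_1-v_2\|$, coming from $\nabla g=O(r)$ on the ball), not $O(\|w\|^2)$; this corrected form is precisely what yields the multiplicative relation $\|w\|=\|w^\top\|(1+O(r^2))$ that you use.
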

\begin{proof}
$\|\tilde{\iota}_k(\my')-\tilde{\iota}_k(\my)\|_{\mathbb{R}^d}=\|\iota_k(\my'-\my)\|_{\mathbb{R}^d}$ follows from the definition. Next, we prove $\|\iota_k(\my'-\my)\|_{\mathbb{R}^d}=d_\nn(\my,\my')(1+O(r))$. For simplicity, we assume $\my_k=0$ and $T_{\my_k} \nn$ is generated by the first $d$ standard basis of $\mathbb{R}^\ell$.
For any $\my \in \mathbb{R}^\ell$, we use the following notation to simplify the proof:
\begin{align} \label{vectornotation}
\my =[\![v ,\,v^\bot ]\!]\in \mathbb{R}^\ell\,,
\end{align}
where $v \in T_{\my_k} \nn$ forms the first $d$ coordinates of $y$ and $v^\bot \in  T^\bot_{\my_k} \nn$ forms the last $\ell-d$ coordinates of $\my$. For any $\my, \my' \in B^{\mathbb{R}^\ell}_{r}(\my_k) \cap \nn$, suppose $\my =[\![v_1 ,\,v_1^\bot ]\!]$ and $\my'=[\![v_2,\,v_2^\bot ]\!]$. Due to the manifold structure of $\nn$, we have
\begin{align}
\|v_1^\bot-v_2^\bot\|_{\mathbb{R}^{\ell-d}} \leq \mathcal{C}_1 r \|v_1-v_2\|_{\mathbb{R}^d},
\end{align}
for some constant $\mathcal{C}_1$ depending on the curvature of $\nn$. Hence,
\begin{align}
\|v_1-v_2\|_{\mathbb{R}^d} \leq \|\my'-\my\|_{\mathbb{R}^\ell}  \leq  \|v_1-v_2\|_{\mathbb{R}^d} \sqrt{1+\mathcal{C}^2_1 r^2},
\end{align}
which is equivalent to 
\begin{align}\label{length vs tangent length}
\|v_1-v_2\|_{\mathbb{R}^d} =\|\my'-\my\|_{\mathbb{R}^\ell} (1+O(r^2)).
\end{align}
Moreover, suppose $\{\beta_{n,r,1}, \cdots, \beta_{n,r,d}\}$ are orthonormal eigenvectors corresponding to $C_{n,r}(\my_k)$'s largest $d$ eigenvalues. Then, by Proposition \ref{structure of discrete PCA}
\begin{align}
\beta_{n,r,1}=[\![\beta_i , 0]\!]+O(r),
\end{align}
where $\{\beta_i\}_{i=1}^d$ form an orthonormal basis of $T_{\my_k} \nn \approx \mathbb{R}^d$.
\begin{align}
\|\iota_k(\my'-\my)\|_{\mathbb{R}^d}=& \|v_1-v_2\|_{\mathbb{R}^d}+\|\my'-\my\|_{\mathbb{R}^\ell} O(r) \\
=& \|\my'-\my\|_{\mathbb{R}^\ell} (1+O(r^2))+\|\my'-\my\|_{\mathbb{R}^\ell} O(r) = \|\my'-\my\|_{\mathbb{R}^\ell}  (1+O(r)) , \nonumber
\end{align}
where we apply \eqref{length vs tangent length} in the second last step.

By equation \eqref{euclidean vs geodesic}, we know that $d_\nn(y,\my') \leq 2D_1 r$. Hence, by Lemma \ref{geodesic vs euclidean},
\begin{align}
\|\iota_k(\my'-\my)\|_{\mathbb{R}^d}= & \|\my'-\my\|_{\mathbb{R}^\ell}  (1+O(r)) = d_\nn(\my,\my')(1+O(d^2_\nn(\my,\my'))) (1+O(r)) \\ 
=& d_\nn(\my,\my')(1+O(r^2))  (1+O(r))= d_\nn(\my,\my') (1+O(r)). \nonumber
\end{align}
\end{proof}

We introduce the following notations to prove the following lemma and proposition. Denote the boundary of $C_k$ by $\partial C_k$. Denote the boundary of $\tilde{\iota}_k(C_k)$ by $\partial \tilde{\iota}_k(C_k)=\tilde{\iota}_k(\partial C_k)$. Let $\tilde{C}_{k,0}$ be the Voronoi cell in $\mathbb{R}^d$ containing $0$ constructed in the Step 4 in Algorithm  \ref{Voronoi cell on manifold algorithm}. Denote the boundary of $\tilde{C}_{k,0}$ by $\partial \tilde{C}_k$. 
Denote $d^{\mathbb{R}^{d}}_{\mathcal{H}}(S_1, S_2)$ be the Hausdorff distance between two sets $S_1$ and $S_2$ in $\mathbb{R}^d$ with respect to the Euclidean metric.

\begin{lem} \label{boundary comparison lemma}
{If $n$ is large enough, for $r$ satisfying Assumption \ref{assumption on voronoi cell},}  with probability greater than $1-\frac{1}{n^2}$, for all $\my_k$  , $d^{\mathbb{R}^{d}}_{\mathcal{H}}(\partial \tilde{\iota}_k(C_k), \partial \tilde{C}_k)=O(r^2)$. 
\end{lem}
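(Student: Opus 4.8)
The plan is to compare the true Voronoi boundary $\partial C_k$, transported into $\mathbb{R}^d$ by $\tilde\iota_k$, with the Euclidean Voronoi boundary $\partial\tilde C_k$ of the projected points $\{0, v_{k,1}, \dots, v_{k,N_k}\}$. Both boundaries are built from bisectors: $\partial C_k$ from the manifold bisectors $G_{kj}$ (equivalently, from the equalities $d_\nn(\my,\my_k)=d_\nn(\my,\my_j)$), and $\partial\tilde C_k$ from the Euclidean bisectors of $0$ and $v_{k,m}=\tilde\iota_k(\my_{k,m})$. The key quantitative input is Lemma \ref{bi lip of iota}, which says $\tilde\iota_k$ restricted to $B^{\mathbb{R}^\ell}_r(\my_k)\cap\nn$ is a $(1+O(r))$ bi-Lipschitz homeomorphism onto its image; together with Assumption \ref{assumption on voronoi cell}(1), which guarantees $C_k\subset B^{\mathbb{R}^\ell}_r(\my_k)$ and that every neighbor $\my_j$ of $C_k$ lies in $B^{\mathbb{R}^\ell}_r(\my_k)$, this lets us work entirely inside the region where the bi-Lipschitz estimate holds.

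First I would fix $\my_k$ and a point $\my\in\partial C_k$; then $\my$ lies on some $G_{kj}$, so $d_\nn(\my,\my_k)=d_\nn(\my,\my_j)$. Applying Lemma \ref{bi lip of iota} to the pairs $(\my,\my_k)$ and $(\my,\my_j)$ gives $\|\tilde\iota_k(\my)\|=d_\nn(\my,\my_k)(1+O(r))$ and $\|\tilde\iota_k(\my)-v_{k,m}\|=d_\nn(\my,\my_j)(1+O(r))$, where $\my_j=\my_{k,m}$. Hence $\bigl|\,\|\tilde\iota_k(\my)\|-\|\tilde\iota_k(\my)-v_{k,m}\|\,\bigr| = O(r)\cdot\max(d_\nn(\my,\my_k),d_\nn(\my,\my_j)) = O(r^2)$, using that both geodesic distances are $O(r)$ by Assumption \ref{assumption on voronoi cell} and \eqref{euclidean vs geodesic}. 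So $\tilde\iota_k(\my)$ is within $O(r^2)$ of the Euclidean bisector of $0$ and $v_{k,m}$. A short convexity/geometry argument — moving $\tilde\iota_k(\my)$ along the segment toward $0$ or $v_{k,m}$ changes the difference of squared distances at a rate bounded below by a constant times $\|v_{k,m}\|\sim r$ (this uses Assumption \ref{assumption on voronoi cell}(2), $d_\nn(\my_{k,i},\my_k)\ge D_2 r$, so the projected points are not degenerately close to $0$) — upgrades the ``$O(r^2)$ off the bisector in the difference-of-distances sense'' to ``$O(r^2)$ in Euclidean distance from the actual Voronoi face $\partial\tilde C_k$.'' One must also check $\tilde\iota_k(\my)$ is on the boundary of the correct cell (the one around $0$) and not on some irrelevant bisector; this follows because $\my\in C_k$ means $\my_k$ is (weakly) the nearest sample, which via the bi-Lipschitz estimate makes $0$ the nearest projected point up to an $O(r^2)$ slack, small compared to the $\sim r$ gaps. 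This gives one inclusion: $\partial\tilde\iota_k(C_k)\subset\{x: d^{\mathbb{R}^d}(x,\partial\tilde C_k)=O(r^2)\}$.

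Conversely, I would take $x\in\partial\tilde C_k$, so $x$ is equidistant in $\mathbb{R}^d$ from $0$ and some $v_{k,m}$ and closer to $0$ than to the others; pulling $x$ back by $\tilde\iota_k^{-1}$ (defined on the image, again bi-Lipschitz with constant $1+O(r)$) produces $\my\in\nn$ with $d_\nn(\my,\my_k)$ and $d_\nn(\my,\my_j)$ equal up to $O(r^2)$ and both $O(r)$, hence $\my$ is within $O(r^2)$ of $G_{kj}\cap C_k\subset\partial C_k$ by the same transversality estimate for manifold bisectors implicit in Proposition \ref{perpendicular to Voronoi face} (the gradient of $d_\nn^2(\cdot,\my_k)-d_\nn^2(\cdot,\my_j)$ is bounded below by $c\,d_\nn(\my_k,\my_j)\sim c r$ near the midpoint). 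Applying the bi-Lipschitz bound once more transfers this to $d^{\mathbb{R}^d}(x,\partial\tilde\iota_k(C_k))=O(r^2)$. Combining the two inclusions yields $d^{\mathbb{R}^d}_{\mathcal H}(\partial\tilde\iota_k(C_k),\partial\tilde C_k)=O(r^2)$; the uniformity over all $\my_k$ and the probability $1-\tfrac1{n^2}$ are inherited verbatim from Lemma \ref{bi lip of iota} (equivalently Proposition \ref{structure of discrete PCA}), which already holds simultaneously for all $k$ on that event.

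The main obstacle I expect is the transversality step in both directions: converting a bound on the \emph{difference of distances} (where a point sits relative to a bisector) into a bound on the \emph{Euclidean distance to the bisector hypersurface} itself. This requires a uniform lower bound on the gradient of the relevant ``distance-difference'' function transverse to the bisector, both for the flat Euclidean bisectors of $0,v_{k,m}$ and for the curved manifold bisectors $G_{kj}$; for the latter one leans on the local submanifold structure and the Gauss-lemma orthogonality from Proposition \ref{perpendicular to Voronoi face}, and crucially on Assumption \ref{assumption on voronoi cell}(2) to keep $\|v_{k,m}\|\sim d_\nn(\my_k,\my_j)\sim r$ bounded away from $0$ so the bisectors are ``uniformly transverse'' at scale $r$ — without that, the $O(r^2)$ slack in the distance comparison would not translate into an $O(r^2)$ geometric distance.
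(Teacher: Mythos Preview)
Your plan is essentially the paper's approach: use the $(1+O(r))$ bi-Lipschitz estimate from Lemma~\ref{bi lip of iota} to show that if $\my\in\partial C_k$ lies on a manifold bisector $G_{k,m}$ then $A=\tilde\iota_k(\my)$ satisfies $\bigl|\,\|A\|-\|A-v_{k,m}\|\,\bigr|=O(r^2)$, and then convert this into $d_{\mathbb R^d}(A,\partial\tilde C_k)=O(r^2)$ using $\|v_{k,m}\|\sim r$ from Assumption~\ref{assumption on voronoi cell}(2). The paper carries out exactly this computation via the elementary identity $|CM|=\frac{(|AB|+|AO|)\,\bigl||AB|-|AO|\bigr|}{2|BO|}$, which is your transversality estimate written out.

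There is, however, a genuine gap in your conversion step. You argue ``close to the bisector $H_m$'' $\Rightarrow$ ``close to $\partial\tilde C_k$'' by convexity, but this only works when $A\in\tilde C_{k,0}$: then $\tilde C_{k,0}$ lies on one side of $H_m$ and $d(A,\partial\tilde C_{k,0})\le d(A,H_m)$. When $A\notin\tilde C_{k,0}$ the nearest point of $\partial\tilde C_{k,0}$ need not lie on $H_m$ at all, and your remark that the $O(r^2)$ slack is ``small compared to the $\sim r$ gaps'' does not resolve this --- Voronoi \emph{faces} can be arbitrarily close to one another even though the \emph{sites} are $\gtrsim r$ apart. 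The paper handles this second case separately: it draws the ray $OA$, lets $D$ be its intersection with $\partial\tilde C_{k,0}$ on some face $\tilde F_{k,j}$ (for a possibly \emph{different} index $j$), and then uses $d_\nn(\my,\my_k)\le d_\nn(\my,\my_{k,j})$ (since $\my\in C_k$) together with $|AO|\ge|AB_j|$ (since $A$ is past the bisector of $O$ and $B_j$) to force $|AD|=O(r^2)$.

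A related issue appears in your reverse direction: you pull $x\in\partial\tilde C_k$ back by $\tilde\iota_k^{-1}$, but $\partial\tilde C_k$ is a union of flat polytopes in $\mathbb R^d$ and in general is not contained in the image $\tilde\iota_k(B^{\mathbb R^\ell}_r(\my_k)\cap\nn)$, so $\tilde\iota_k^{-1}(x)$ is undefined. The paper simply asserts that this direction ``follows the similar argument'' --- i.e.\ one repeats the same two-case ray/convexity analysis with the roles of the manifold and Euclidean Voronoi cells interchanged, rather than invoking an inverse map.
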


\begin{proof}  For simplicity, in this proof, we use $|\cdot|$ to denote $\| \cdot \|_{\mathbb{R}^d}$. 
{Based on Assumption \ref{assumption on voronoi cell}, the requirement of  Lemma \ref{bi lip of iota} holds.} Recall that in Assumption \ref{assumption on voronoi cell}, we assume $B^{\mathbb{R}^\ell}_{r}(\my_k) \cap \{\my_i\}_{i=1}^n=\{\my_{k,1}, \cdots, \my_{k, N_k}\}$.  We have $C_k \subset B^{\mathbb{R}^\ell}_{r}(\my_k)$. Moreover, if $\Gamma_{kj}$ is  a Voronoi surface of $C_k$ between $\my_k$ and $\my_j$, then $\my_j \in B^{\mathbb{R}^\ell}_{r}(\my_k)$. We denote $\Gamma_{k,i}$ to be the Voronoi face between $\my_k$ and $\my_{k,i}$.

The proof has two steps, first we show that for any $v \in \partial \tilde{\iota}_k(C_k)$, $d_{\mathbb{R}^d}(v, \partial \tilde{C}_k)=O(r^2)$. We need to consider two cases in this step.

\textbf{Case 1: $v \in \tilde{C}_{k,0}$}

\begin{figure}[h!]
\centering
\subfigure[The Case when $v=A \in \tilde{C}_{k,0}$]{
\includegraphics[width=0.45\columnwidth]{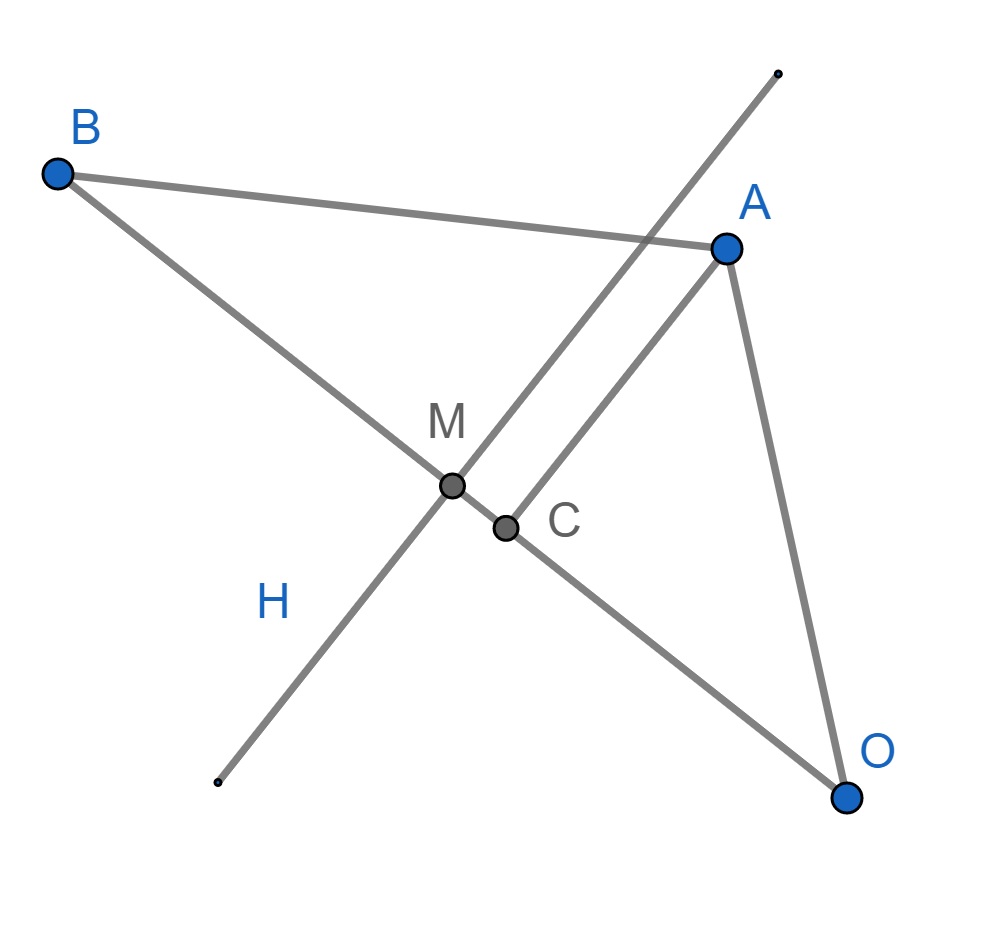}\label{Lemma9Case1}
}
\subfigure[The Case when $v=A  \not\in \tilde{C}_{k,0}$]{
\includegraphics[width=0.45\columnwidth]{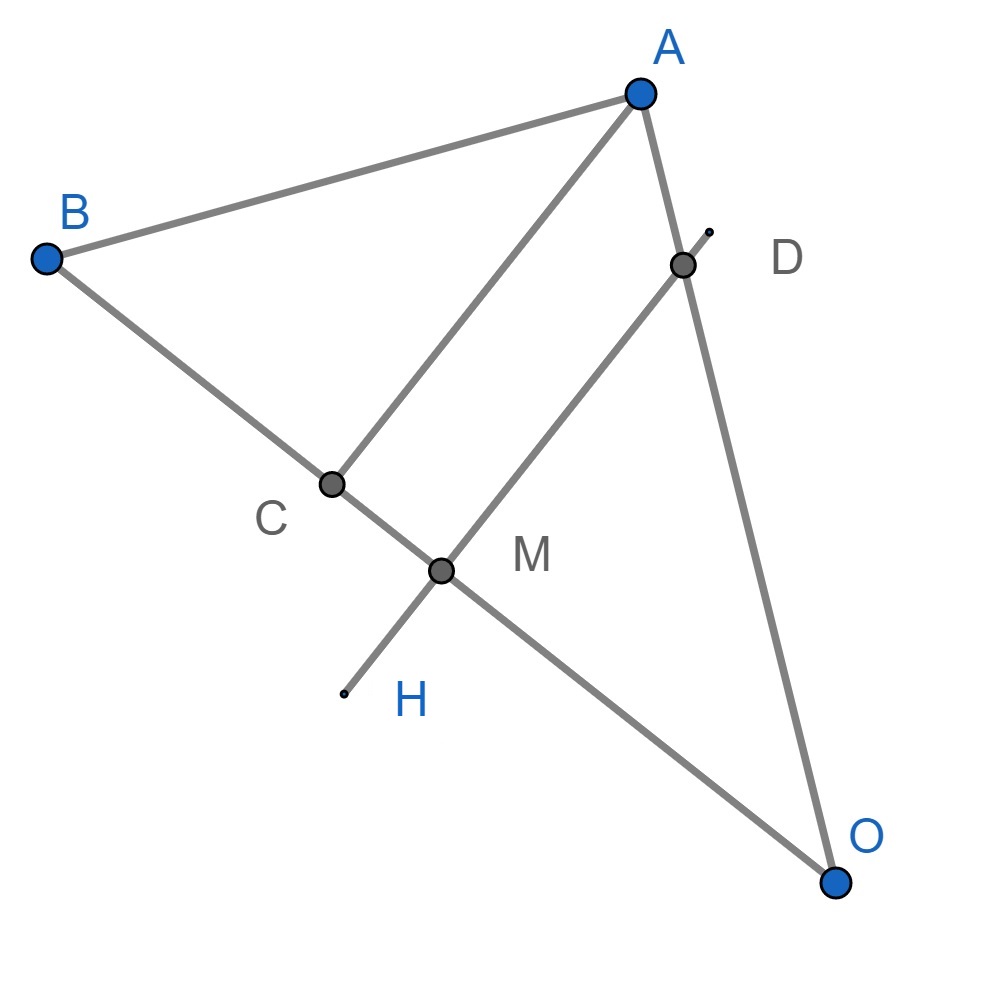}\label{Lemma9Case2}
}
\caption{Illustrations of Case 1 and Case 2 in the proof of Lemma \ref{boundary comparison lemma}.}
\end{figure}

Suppose $v=\tilde{\iota}_k(\my)$ for some $\my \in \partial C_k$. Moreover $\my \in \Gamma_{k,i}$.  In other word, $\my$ is on the Voronoi face between $\my_k$ and $\my_{k,i} \in B^{\mathbb{R}^\ell}_{r}(\my_k)$. As shown in figure \ref{Lemma9Case1}, let $O$ be the origin in $\mathbb{R}^d$. Let $A=v$ and $B=\tilde{\iota}_k(\my_{k,i})$. 
Let $H$ be the hyperplane in $\mathbb{R}^d$ which perpendicularly bisects $OB$. $M$ is the intersection of $H$ and $OB$. Let $C$ be the point on $OB$ so that $AC$ is perpendicular to $OB$. Since we assume $A \in  \tilde{C}_{k,0}$, $C \in OM$. We have
\begin{align}
d_{\mathbb{R}^d}(A,H)=& |CM|=\frac{|CB|-|CO|}{2}=\frac{|CB|^2-|CO|^2}{2(|CB|+|CO|)} \\
=& \frac{|AB|^2-|AC|^2-(|AO|^2-|AC|^2)}{2|BO|}=\frac{(|AB|+|AO|)(|AB|-|AO|)}{2|BO|}
\end{align}
Since $\my \in \Gamma_{k,i}$, by equation \eqref{euclidean vs geodesic}, $d_\nn(\my,\my_{k,i})=d_\nn(\my,\my_{k})=a \leq D_1 r$. By Lemma \ref{bi lip of iota}, $|AB|=a(1+O(r))$ and $|AO|=a(1+O(r))$, hence $(|AB|+|AO|)(|AB|-|AO|)=2a^2 O(r) \leq 2 D^2_1 O(r^3)$. By Lemma \ref{bi lip of iota}, $|BO|=d_\nn(\my_k, \my_{k,i})(1+O(r))$. By (2) in Assumption \ref{assumption on voronoi cell}, $d_\nn(\my_k, \my_{k,i}) \geq  D_2 r$. Hence,
\begin{align}
d_{\mathbb{R}^d}(A,H)=\frac{(|AB|+|AO|)(|AB|-|AO|)}{2|BO|} \leq \frac{D^2_1}{D_2}O(r^2)=O(r^2).
\end{align}
Since $ \tilde{C}_{k,0}$ is convex, $d_{\mathbb{R}^d}(A,\partial \tilde{C}_k) \leq  d_{\mathbb{R}^d}(A,H)$. The conclusion follows.  Note that if $A \not \in \tilde{C}_{k,0}$, we still have $d_{\mathbb{R}^d}(A,H)=O(r^2)$. However, it is not true that  $d_{\mathbb{R}^d}(A, \partial \tilde{C}) \leq  d_{\mathbb{R}^d}(A,H)$.

\textbf{Case 2: $v \not \in \tilde{C}_{k,0}$}

Suppose $v=\tilde{\iota}_k(\my)$ for some $\my \in \partial C_k$.  As shown in figure \ref{Lemma9Case2}, let $O$ be the origin in $\mathbb{R}^d$. Let $A=v$. Suppose $OA$ intersects with $\partial \tilde{C}_k$ at $D$.  $D \in \tilde{\Gammaf}_{k,j}$, where $\tilde{\Gammaf}_{k,j}$ is Voronoi face in $\mathbb{R}^d$ between $O$ and $B=\tilde{\iota}_k(\my_{k,j})$. $H$ is the hyperplane that perpendicularly bisects $OB$. $M$ is the intersection between $H$ and $OB$. Note that $ \tilde{\Gammaf}_{k,j} \subset H$. Let $C$ be the point on $OB$ so that $AC$ is perpendicular to $OB$. Since we assume $A  \not \in  \tilde{C}_{k,0}$, $C \in BM$. We have
\begin{align}
|CM|=&\frac{|CO|-|CB|}{2}=\frac{|CO|^2-|CB|^2}{2(|CB|+|CO|)} \\
=& \frac{|AO|^2-|AC|^2-(|AB|^2-|AC|^2)}{2|BO|}=\frac{(|AB|+|AO|)(|AO|-|AB|)}{2|BO|}.
\end{align}
$\my \in \partial C_k$ but we may not have $\my \in \Gamma_{k,j}$,  therefore, $a =d_\nn(\my,\my_{k,j}) \geq d_\nn(\my,\my_{k})=b$. $d_\nn(\my,\my_{k,j}) \leq  d_\nn(\my,\my_{k})+ d_\nn(\my_{k,j},\my_{k})$, hence by equation \eqref{euclidean vs geodesic}, $b  \leq D_1 r$ and $a  \leq 2D_1 r$. By Lemma \ref{bi lip of iota}, $|AB|=a(1+O(r))$ and $|AO|=b(1+O(r))$.  Since $a \geq b$ and $|AB| \leq |AO|$, we have $ 0 \leq a-b=D_1 O(r^2)$. Hence, $|AO|-|AB|=D_1 O(r^2)$.  
By Lemma \ref{bi lip of iota} and Assumption \ref{assumption on voronoi cell}, 
\begin{align}
& |BO|=d_\nn(\my_k, \my_{k,j})(1+O(r)) \geq  D_2 r (1+O(r)),\\
& |AO|=d_\nn(\my,\my_{k}) (1+O(r)) \leq  D_1 r (1+O(r)).
\end{align}
Hence,
$(|AB|+|AO|)(|AB|-|AO|) \leq 2|AO|(|AB|-|AO|)  \leq 2 D^2_1 O(r^3)$.
Moreover,
\begin{align}
|CM|=\frac{(|AB|+|AO|)(|AB|-|AO|)}{2|BO|} \leq \frac{D^2_1}{D_2}O(r^2)=O(r^2).
\end{align}
At last,
\begin{align}
|AD|=\frac{|CM||AO|}{|OC|} \leq \frac{|CM||AO|}{|OM|}=\frac{2|CM||AO|}{|OB|} \leq \frac{2|CM| D_1 r(1+O(r))}{ D_2 r(1+O(r))}=O(r^2).
\end{align}
Since $d_{\mathbb{R}^d}(A,\partial \tilde{C}) \leq  |AD|$, the conclusion follows.

In the second step,  we show that for any $v \in  \tilde{C}_k $, $d_{\mathbb{R}^d}(v, \partial \tilde{\iota}_k(C_k))=O(r^2)$. The proof  follows the similar argument as the first step, so we omit it.
\end{proof}

Now we prove the first main proposition.

\textbf{\underline{Proof of Proposition \ref{approximation of the volume of a Voronoi cells}}}

\begin{proof}
By Assumption \ref{assumption on voronoi cell}, for any $\my \in \partial C_k$, $\frac{1}{2}D_2 r \leq d_\nn(\my, \my_k) \leq  D_1 r$. By Lemma \ref{bi lip of iota}, any for $v \in \partial \tilde{\iota}_k(C_k)$, $\frac{1}{2}D_2 r(1+O(r)) \leq \|v\|_{\mathbb{R}^d} \leq D_1 r(1+O(r))$. Hence, $\frac{1}{2}D_2 r+O(r^2) \leq \|v\|_{\mathbb{R}^d} \leq D_1 r+O(r^2)$. By Lemma \ref{boundary comparison lemma} and the triangle inequality, for any $v' \in \partial \tilde{C}_k$,  $\frac{1}{2}D_2 r+O(r^2) \leq \|v'\|_{\mathbb{R}^d} \leq D_1 r+O(r^2)$. Since $\tilde{C}_{k,0}$ is convex, we conclude that there is a constant $\Omega$ such that $|\tilde{C}_{k,0}|=\Omega r^d +O(r^{d+1})$.  By Lemma \ref{boundary comparison lemma} and the fact that $\tilde{C}_{k,0}$ is convex, $|\tilde{\iota}_k(C_k)|=\Omega r^d +O(r^{d+1})=|\tilde{C}_{k,0}|(1+O(r))$.  By Lemma \ref{bi lip of iota}, $|C_k|=|\tilde{\iota}_k(C_k)| (1+O(r))^d=|\tilde{\iota}_k(C_k)|(1+O(r))$. Therefore, $|\tilde{C}_{k}|=|\tilde{C}_{k,0}|=|C_k|(1+O(r))$.
\end{proof}
\textbf{\underline{Proof of Proposition \ref{approximation of the area of a Voronoi face}}}

\begin{proof}
We provide a sketch of the proof. Use $|\cdot |$ to denote the $d-1$ dimensional Hausdorff measure. $\partial A$ denotes the topological boundary of a set $A$. Suppose $B^{\mathbb{R}^\ell}_{r}(\my_k) \cap \{\my_i\}_{i=1}^n=\{\my_{k,1}, \cdots, \my_{k, N_k}\}$.   Suppose  $\Gamma_{k,i}$ is the Voronoi face between $\my_k$ and $\my_{k,i}$.

Step 1 We approximate the Voronoi face $\Gamma_{k,i}$ by a region in a $d-1$ dimensional affine subspace in $\mathbb{R}^\ell$.

Suppose the minimizing geodesic intersects the bisector $G$ between $\my$ and $\my_{k,i}$ at $\my^*_{k,i}$. Then, {by Assumption \ref{assumption on voronoi cell} and Proposition \ref{perpendicular to Voronoi face}}, there is a $d-1$ dimensional subspace $S_{k,i}$ of $T_{\my^*_{k,i}} \nn$ which is perpendicular to the tangent vector of the minimizing geodesic at $\my^*_{k,i}$. If we realize $T_{\my^*_{k,i}} \nn$ as a subspace of $\mathbb{R}^\ell$, then the affine subspace $\my^*_{k,i}+S_{k,i}$ is tangent to $G$ at $\my^*_{k,i}$.  Without loss of generality, we rotate and translate the manifold $\nn$ so that $\my^*_{k,i}=0$ and $S_{k,i}$ is identified with the subspace of $\mathbb{R}^\ell$ generated by the first $d-1$ standard basis. {By Assumption \ref{assumption on voronoi cell} and Proposition \ref{perpendicular to Voronoi face}}, there is an open subset of $S_{k,i}$ and denote $L_{k,i}$ to be its closure such that for any $\my \in \Gamma_{k,i}$, we have
\begin{align}
\my=(u, g_1(u), \cdots, g_{\ell-d+1}(u)), \label{chart of Voronoi face}
\end{align}
where $u \in L_{k,i} \subset \mathbb{R}^{d-1}$ and $g_i: \mathbb{R}^{d-1} \rightarrow \mathbb{R}$. Moreover, $g_j(u)$ is smooth and  $g_j(0)=0$ and $\nabla g_j(0)=0$. The second order derivative of $g_i$ can be bounded by the curvature of $\nn$ at $\my_k$ and $\my_{k,i}$. By (1) in Assumption \ref{assumption on voronoi cell}, $\Gamma_{k,i} \subset C_k \subset B^{\mathbb{R}^\ell}_r(\my_k)$, hence for any $\my \in \Gamma_{k,i}$, $\|\my-\my_k\|_{\mathbb{R}^{\ell}} \leq r$. $\|\my^*_{k,i}-\my_k\|_{\mathbb{R}^{\ell}} \leq d_\nn(\my^*_{k,i},\my_k)=\frac{1}{2}d_\nn(\my_{k,i},\my_k) ) \leq \frac{1}{2} D_1 r$. Since $\my_{k,i}=0$, 
\begin{align}
\|\my\|_{\mathbb{R}^{\ell}}=\|\my-\my^{*}_{k,i}\|_{\mathbb{R}^{\ell}} \leq \|\my-\my_k\|_{\mathbb{R}^{\ell}}+\|\my^*_{k,i}-\my_k\|_{\mathbb{R}^{\ell}} \leq (1+\frac{1}{2} D_1)r.
\end{align}

By \eqref{chart of Voronoi face}, for any $u \in L_{k,i}$, $\|u\|_{\mathbb{R}^{d-1}} \leq \|\my\|_{\mathbb{R}^{\ell}} \leq (1+\frac{1}{2} D_1)r$. Thus, $L_{k,i}$ is contained in a $d-1$ dimensional ball of radius $(1+\frac{1}{2} D_1)r$ in $\mathbb{R}^{d-1}$. Hence 
\begin{align}\label{Step 0 area comparison}
|L_{k,i}| \leq |S^{d-1}| (1+\frac{1}{2} D_1)^{d-1}r^{d-1}
\end{align}

\eqref{chart of Voronoi face} implies that 
\begin{align}\label{Step 1 area comparison}
|\Gamma_{k,i}|=|L_{k,i}|+O(|L_{k,i}|^{\frac{d}{d-1}})=|L_{k,i}|+O(r^d), 
\end{align}
where we use $|L_{k,i}| \leq  |S^{d-1}| (1+\frac{1}{2} D_1)^{d-1}r^{d-1}$ in the last step. 
Moreover, 
\begin{align}\label{Step 1 H distance}
d^{\mathbb{R}^{\ell}}_{\mathcal{H}}(\partial\Gamma_{k,i}, \partial L_{k,i})=\max_{u \in \partial L_{k,i}}\sqrt{ g^2_1(u)+ \cdots, g^2_{\ell-d+1}(u)}= O(r^2),
\end{align}
where $d^{\mathbb{R}^{\ell}}_{\mathcal{H}}$ is the Hausdorff distance with respect to the Euclidean metric of $\mathbb{R}^{\ell}$.

Step 2 

This step is an analogue of Lemma \ref{bi lip of iota} when we apply $\tilde{\iota}_k$ to the affine subspace $\my^*_{k,i}+T_{\my^*_{k,i}} \nn$. 
If we identify both $T_{\my^*_{k,i}} \nn$ and $T_{\my_k} \nn$ as the subspaces of $\mathbb{R}^\ell$, then we show that $T_{\my^*_{k,i}} \nn$ is a small perturbation of $T_{\my_k} \nn$ when $r$ is small. For simplicity, we rotate and translate the manifold so that  $\my_k=0$ and $T_{\my_k} \nn$ is generated by the first $d$ standard orthonormal basis $\{e_1, \cdots, e_d\}$ of $\mathbb{R}^\ell$. By the manifold structure of $\nn$, there is an orthonormal basis  $\{e'_1, \cdots, e'_d\}$ of $T_{\my^*_{k,i}} \nn$ with $e'_i=e_i+O(r^2)$. 
By Proposition \ref{structure of discrete PCA} and the similar argument in Lemma \ref{bi lip of iota}, we can show that $\tilde{\iota}_k$ restricted on  the affine subspace $\my^*_{k,i}+T_{\my^*_{k,i}} \nn$ is a $1+O(r)$ bi-Lipschitz homeomorphism.

Step 3

For simplicity,denote  $\tilde{\iota}_k(\my^*_{k,i}+ L_{k,i})$ by $\tilde{\iota}_k(L_{k,i})$, denote $\tilde{\iota}_k(\my^*_{k,i}+\partial L_{k,i})$ by $\tilde{\iota}_k( \partial L_{k,i})$  and denote $\partial \tilde{\iota}_k(\my^*_{k,i}+L_{k,i})$  by $\partial \tilde{\iota}_k(L_{k,i})$.  Since $\tilde{\iota}_k$ restricted on  the affine subspace $\my^*_{k,i}+T_{\my^*_{k,i}} \nn$ is  homeomorphism, $\partial \tilde{\iota}_k(L_{k,i})=\tilde{\iota}_k( \partial L_{k,i})$. Moreover, Lemma \ref{bi lip of iota} shows that $\tilde{\iota}_k$ restricted on $ B^{\mathbb{R}^\ell}_{r}(\my_k) \cap \nn$ is a homeomorphism. Hence, $\partial \tilde{\iota}_k(\Gamma_{k,i})=\tilde{\iota}_k( \partial \Gamma_{k,i})$. Since  $\tilde{\iota}_k$ is a projection, 
\begin{align}\label{Step 3 H distance}
d^{\mathbb{R}^{d}}_{\mathcal{H}}(\partial \tilde{\iota}_k(\Gamma_{k,i}), \partial \tilde{\iota}_k(L_{k,i}))=d^{\mathbb{R}^{d}}_{\mathcal{H}}(\tilde{\iota}_k( \partial \Gamma_{k,i}), \tilde{\iota}_k( \partial L_{k,i})) \leq d^{\mathbb{R}^{\ell}}_{\mathcal{H}}(\partial\Gamma_{k,i}, \partial L_{k,i})= O(r^2),
\end{align}
where we use \eqref{Step 1 H distance} in the last step. Since  $\tilde{\iota}_k$ is a projection,  $\tilde{\iota}_k(L_{k,i})$ is a subset of a $d-1$ dimensional affine subspace of $\mathbb{R}^d$. Since  $\tilde{\iota}_k$ restricted on  the affine subspace $\my^*_{k,i}+T_{\my^*_{k,i}} \nn$ is a $1+O(r)$ bi-Lipschitz homeomorphism, 
\begin{align}\label{Step 3 area comparison}
|L_{k,i}|=|\tilde{\iota}_k(L_{k,i})|(1+O(r)). 
\end{align}

Step 4

Recall in the step (4) in Algorithm \ref{Voronoi cell on manifold algorithm}, we find the Voronoi cell decomposition of $\{0, \tilde{\iota}_k(\my_{k,1}), \cdots, \tilde{\iota}_k(\my_{k,N_k})\}$ in $\mathbb{R}^d$. The Voronoi cell containing $0$ is $\tilde{C}_{k,0}$.  The Voronoi face between $0$ and $\tilde{\iota}_k(\my_{k,i})$ is denoted as $\tilde{\Gammaf}_{k,i}$. If $\my \in \partial \Gamma_{k,i}$, then there is a third point $\my_{k,j}$ such that $d_{\nn}(\my, \my_k)=d_{\nn}(\my, \my_{k,i})=d_{\nn}(\my, \my_{k,j})$. By using the similar argument in Lemma \ref{boundary comparison lemma}, we can show that
\begin{align}
d^{\mathbb{R}^{d}}_{\mathcal{H}}(\partial \tilde{\iota}_k(\Gamma_{k,i}), \partial \tilde{\Gammaf}_{k,i})=O(r^2).
\end{align}
By \eqref{Step 3 H distance}, we have
\begin{align}
d^{\mathbb{R}^{d}}_{\mathcal{H}}( \partial \tilde{\iota}_k(L_{k,i}), \partial \tilde{\Gammaf}_{k,i})=O(r^2).
\end{align}

Step 5

By (1) in Assumption \ref{assumption on voronoi cell}, $C_k \subset B^{\mathbb{R}^\ell}_{r}(\my_k) \cap \nn$. Since  $\tilde{\iota}_k$ is a projection,  $\tilde{\iota}_k(C_k)$ is in the ball of radius $r$ centered at $0$ in $\mathbb{R}^d$. By Lemma \ref{boundary comparison lemma}, $\tilde{C}_k$ is in the ball of radius $2r$  centered at $0$ in $\mathbb{R}^d$, when $r$ is small enough. $ \tilde{\Gammaf}_{k,i}$ is a convex polygon and is in a $d-1$ dimensional affine subspace $H_{k,i}$ in $\mathbb{R}^d$. We know that $\partial \tilde{\Gammaf}_{k,i}=\cup_j  \mathcal{C}_j$, where each $\mathcal{C}_j$ is a $d-2$ dimensional convex polygon.  Each $\mathcal{C}_j$ is a ball of radius  $2r$.   Hence, we have $\mathcal{H}^{d-2}(\partial \tilde{\Gammaf}_{k,i})=O(r^{d-2})$ and any $O(r^2)$ neighborhood of $\partial  \tilde{\Gammaf}_{k,i}$ in $H_{k,i}$ has $d-1$ Hausdorff measure $O(r^d)$.  Since $d^{\mathbb{R}^{d}}_{\mathcal{H}}( \partial \tilde{\iota}_k(L_{k,i}), \partial \tilde{\Gamma}_{k,i})=O(r^2)$ and  $\tilde{\iota}_k(L_{k,i})$ is a subset of a $d-1$ dimensional affine subspace of $\mathbb{R}^d$, we rotate and translate $\tilde{\iota}_k(L_{k,i})$ so that $\partial \tilde{\iota}_k(L_{k,i})$ is in a $O(r^2)$ neighborhood of $\partial  \tilde{\Gamma}_{k,i}$ in $H_{k,i}$. Therefore,
\begin{align}\label{Step 5 area comparison}
|\tilde{\iota}_k(L_{k,i})|=| \tilde{\Gammaf}_{k,i}|+O(r^d)
\end{align}
Combine \eqref{Step 0 area comparison}, \eqref{Step 1 area comparison}, \eqref{Step 3 area comparison} and \eqref{Step 5 area comparison}, we have
\begin{align}
|\Gamma_{k,i}|=|\tilde{\Gammaf}_{k,i}|+O(r^d).
\end{align}
If  $\my_\ell=\my_{k,i} \in B^{\mathbb{R}^\ell}_{r}(\my_k)$, then $\tilde{A}_{kl}=|\tilde{\Gammaf}_{k,i}|$. So, $|\Gamma_{k\ell}|=\tilde{A}_{kl}+O(r^d)$.  Similarly, $|\Gamma_{\ell k}|=|\Gamma_{k \ell}|=\tilde{A}_{kl}+O(r^d)$. Hence, $|\Gamma_{k\ell}|=\frac{\tilde{A}_{kl}+\tilde{A}_{kl}}{2}+O(r^d)=A_{k \ell} +O(r^d)$. If  $A_{k \ell} \geq a_1 r^d$, we automatically have the conclusion. If $A_{k \ell} <a_1 r^d$, then $|\Gamma_{k\ell}|=O(r^d)$ and $|\tilde{\Gamma}_{k \ell}|=a_1 r^d$. So, we also have $|\Gamma_{k\ell}|=|\tilde{\Gamma}_{k \ell}|+O(r^d)$.

\end{proof}
\
\section{Other standard time discretizations}\label{otherS}
Lemma \ref{explicit-lem} below gives the maximum principle, and exponential convergence for an explicit scheme under { Courant-Friedrichs-Lewy (CFL) condition.}  Lemma \ref{implicit-lem} below gives the unconditional maximum principle, and exponential convergence for an implicit scheme.  
\begin{lem}\label{explicit-lem}
{Let $\tilde{\lmd}_i$ be the approximated jump rate and $\tilde{P}_{ij}$ be the approximated transition probability defined in \eqref{definition of delta tilde and P tilde}.}
Let $\Delta t$ be the time step and consider the explicit scheme for \eqref{mp1_pron}
\begin{equation}\label{explicit}
\frac{\rho_i^{n+1}|\tilde{C}_i| - \rho_{i}^n |\tilde{C}_i|}{\Delta t} = \left( \sum_{j\in \text{VF}(i)} \tilde{\lmd}_j \tilde{ P}_{ij} \rho_j^n |\tilde{ C}_j| - \tilde{\lmd}_i \rho_i^n |\tilde{ C}_i|   \right).
\end{equation}
With the detailed balance property \eqref{db_cfl}, and the CFL condition for $\Delta t$
\begin{equation}\label{tm_cfl}
\Delta t \leq \frac{1}{\tilde{\lmd}_i} =  \frac{2|\tilde{ C}_i|\pi_i}{\sum_{j\in \text{VF}(i)} \frac{\pi_i+ \pi_j}{|y_i-y_j|}|\tilde{\Gamma}_{ij}|},
\end{equation}
we have
\begin{enumerate}[(i)]
\item the conversational law for $ \rho_i^{k+1}|\tilde{C}_i|$, i.e.
\begin{equation}
\sum_i  \rho_i^{k+1} |\tilde{ C}_i| = \sum_i \rho_i^{k} |\tilde{ C}_i|;
\end{equation}
\item the equivalent updates 
for $u_i^{k+1}=\frac{\rho_i^{k+1}}{\pi_i}$ 
\begin{equation}\label{matrix_ex}
u^{k+1} = (I+\Delta t Q) u^k, \quad  Q:= \{b_{ij}\} \text{ with } b_{ij}:= \left\{\begin{array}{cc}
-\tilde{\lmd}_i  , \quad &j=i;\\
\tilde{\lmd}_i \tilde{P}_{ji}, \quad &j\neq i;
\end{array}\right.
\end{equation}
\item  the  maximum principle for $\frac{\rho_i}{\pi_i}$
\begin{equation}
\max_ij \frac{\rho^{k+1}_j }{\pi_j}\leq \max_j \frac{\rho^{k}_j }{\pi_j}.
\end{equation}
\item the $\ell^\8$ contraction
\begin{equation}
\max_i \left| \frac{\rho^{k+1}_i}{\pi_i}-1\right| \leq \max_i \left| \frac{\rho^{k}_i}{\pi_i}-1\right| ;
\end{equation}
\item the exponential convergence
\begin{equation}
\left\| \frac{\rho^{k}_i}{\pi_i}-1\right\|_{\ell^\8} \leq c  |\lambda_2|^k, \quad |\lambda_2|<1,
\end{equation}
where $\lambda_2$ is the second eigenvalue (in terms of the magnitude) of $(I+\Delta t Q)$.
\end{enumerate}
\end{lem}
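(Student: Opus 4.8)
The plan is to follow closely the spectral arguments already carried out for the continuous-time generator in Proposition \ref{decay} and for the unconditionally stable scheme in Proposition \ref{error bound for the unconditional stable explicit scheme}; the only genuinely new ingredient is that the CFL condition \eqref{tm_cfl} is precisely the threshold at which $I+\Delta t Q$ remains a nonnegative matrix, which is what buys the \emph{conditional} maximum principle. For (i) I would sum \eqref{explicit} over $i$, exchange the order of summation in the gain term, and use $\sum_{i\in VF(j)}\tilde{P}_{ji}=1$ (which follows from the definition of $\tilde{\lambda}_j$ in \eqref{definition of delta tilde and P tilde}) so that $\sum_{i,j}\tilde{\lambda}_j\tilde{P}_{ji}\rho_j^{k}|\tilde{C}_j|=\sum_j\tilde{\lambda}_j\rho_j^{k}|\tilde{C}_j|$ cancels the loss term $\sum_i\tilde{\lambda}_i\rho_i^{k}|\tilde{C}_i|$, giving $\sum_i\rho_i^{k+1}|\tilde{C}_i|=\sum_i\rho_i^{k}|\tilde{C}_i|$. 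For (ii) I would divide \eqref{explicit} by $\pi_i|\tilde{C}_i|$, write $\rho_j^{k}=\pi_j u_j^{k}$, and invoke the detailed balance \eqref{db_cfl}, i.e.\ $\tilde{\lambda}_j\tilde{P}_{ji}\pi_j|\tilde{C}_j|=\tilde{\lambda}_i\tilde{P}_{ij}\pi_i|\tilde{C}_i|$, to rewrite the gain term purely in terms of the $i$-th row of $\tilde{P}$; this produces exactly $u^{k+1}=(I+\Delta t Q)u^{k}$ with $Q$ as in \eqref{matrix_ex}, whose rows sum to zero.

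For (iii) and (iv) the key point is that under \eqref{tm_cfl} the matrix $I+\Delta t Q$ is \emph{row-stochastic}: its off-diagonal entries $\Delta t\,\tilde{\lambda}_i\tilde{P}_{ij}$ are nonnegative, its diagonal entries $1-\Delta t\,\tilde{\lambda}_i$ are nonnegative by \eqref{tm_cfl}, and each row sums to $1-\Delta t\tilde{\lambda}_i+\Delta t\tilde{\lambda}_i\sum_j\tilde{P}_{ij}=1$. Hence $u_i^{k+1}$ is a convex combination of $\{u_j^{k}\}_j$, which immediately gives $\max_i u_i^{k+1}\le\max_j u_j^{k}$ (and the analogous lower bound), i.e.\ the maximum principle (iii). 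Since the row sums are $1$, the constant vector $u^*=(1,\dots,1)$ is a fixed point, so applying the same convex-combination estimate to $u^{k}-u^*$ yields $|u_i^{k+1}-1|\le\sum_j(I+\Delta t Q)_{ij}\,|u_j^{k}-1|\le\max_j|u_j^{k}-1|$, which is the $\ell^\infty$ contraction (iv).

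For (v) I would reuse the decomposition argument of Proposition \ref{decay} and Proposition \ref{error bound for the unconditional stable explicit scheme}: the detailed balance \eqref{db_cfl} makes $Q$, hence $I+\Delta t Q$, self-adjoint in the weighted inner product $\langle u,v\rangle_{\pi|C|}=\sum_i u_i v_i\pi_i|\tilde{C}_i|$, so all eigenvalues are real; strong connectedness of the Voronoi graph makes $I+\Delta t Q$ irreducible, and (using that at least one self-loop $1-\Delta t\tilde{\lambda}_i>0$ is present, i.e.\ taking the CFL inequality strict for the relevant index) aperiodic, so some power has strictly positive entries and Perron--Frobenius gives that $\mu_1=1$ is a simple, strictly dominant eigenvalue with eigenvector $u^*$ while all other eigenvalues satisfy $|\mu_j|<1$. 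Expanding $u^0-u^*=\sum_{j\ge2}c_j w_j$ in a $\langle\cdot,\cdot\rangle_{\pi|C|}$-orthonormal eigenbasis — the component along $u^*$ vanishing because $\langle u^0-u^*,u^*\rangle_{\pi|C|}=\sum_i\rho_i^0|\tilde{C}_i|-\sum_i\pi_i|\tilde{C}_i|=0$ for a probability initial datum — and iterating gives $u^{k}-u^*=\sum_{j\ge2}c_j\mu_j^{k}w_j$, hence $\|u^{k}-u^*\|_{\pi|C|}\le|\mu_2|^{k}\|u^0-u^*\|_{\pi|C|}$; passing to $\ell^\infty$ by equivalence of norms on $\mathbb{R}^n$ yields the stated exponential convergence with rate $|\mu_2|<1$.

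The main obstacle here is bookkeeping rather than depth: no step is harder than the two propositions already established, and the one delicate issue is the role of the CFL bound \eqref{tm_cfl}. It is exactly the threshold at which $I+\Delta t Q$ ceases to be a nonnegative matrix, so dropping it destroys (iii)--(iv) — which is the whole reason the unconditionally stable scheme \eqref{559semi} is introduced — while for the rate in (v) one should be slightly careful to ensure aperiodicity so that $|\mu_2|<1$ rather than merely $|\mu_2|\le 1$.
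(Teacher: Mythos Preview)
Your proposal is correct and follows essentially the same route as the paper, which in fact writes only ``The proof of the two lemmas are same as Proposition \ref{error bound for the unconditional stable explicit scheme} and we omit it.'' Your argument is the natural explicit-scheme analogue of that proposition: conservation from $\sum_i\tilde P_{ji}=1$, the backward form $u^{k+1}=(I+\Delta t Q)u^k$ via detailed balance, the CFL condition \eqref{tm_cfl} making $I+\Delta t Q$ row-stochastic so that (iii)--(iv) follow from convex combinations, and Perron--Frobenius in the $\pi|\tilde C|$-weighted inner product for (v). Your remark that one needs a strict inequality in \eqref{tm_cfl} for at least one $i$ to guarantee aperiodicity (hence $|\mu_2|<1$ rather than $\le 1$) is a legitimate technical caveat that the paper glosses over.
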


\begin{lem}\label{implicit-lem}
{Let $\tilde{\lmd}_i$ be the approximated jump rate and $\tilde{P}_{ij}$ be the approximated transition probability defined in \eqref{definition of delta tilde and P tilde}.}
Let $\Delta t$ be the time step and consider the implicit scheme 
\begin{equation}\label{implicit}
\frac{\rho_i^{n+1}}{\pi_i} = \frac{\rho^n_i}{\pi_i}-\tilde{\lmd}_i \Delta t \frac{\rho^{n+1}_i}{\pi_i}  +  \Delta t\sum_{j\in \text{VF}(i)} \tilde{\lmd}_i \tilde{ P}_{ji}   \frac{\rho^{n+1}_j }{\pi_j}.
\end{equation}
We have the following unconditional properties:
\begin{enumerate}[(i)]
\item the conversational law for $ \rho_i^{k+1}|\tilde{C}_i|$, i.e.
\begin{equation}
\sum_i  \rho_i^{k+1} |\tilde{ C}_i| = \sum_i \rho_i^{k} |\tilde{ C}_i|;
\end{equation}
\item the equivalent updates 
for $u_i^{k+1}=\frac{\rho_i^{k+1}}{\pi_i}$ { with same $Q$ in \eqref{matrix_ex}}
\begin{equation}
(I-\Delta t Q)u^{k+1} =  u^k;
\end{equation}
\item  the  maximum principle for $\frac{\rho_i}{\pi_i}$
\begin{equation}
\max_i \frac{\rho^{k+1}_j }{\pi_j}\leq \max_j \frac{\rho^{k}_j }{\pi_j}.
\end{equation}
\item the $\ell^\8$ contraction
\begin{equation}
\max_i \left| \frac{\rho^{k+1}_i}{\pi_i}-1\right| \leq \max_i \left| \frac{\rho^{k}_i}{\pi_i}-1\right| ;
\end{equation}
\item the exponential convergence
\begin{equation}\label{exp_im}
\left\| \frac{\rho^{k}_i}{\pi_i}-1\right\|_{\ell^\8} \leq c  |\lambda_2|^k, \quad |\lambda_2|<1,
\end{equation}
where $\lambda_2$ is the second eigenvalue (in terms of the magnitude) of $(I-\Delta t Q)^{-1}$. 
\end{enumerate}
\end{lem}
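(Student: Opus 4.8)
The argument parallels the proof of Proposition \ref{error bound for the unconditional stable explicit scheme} for the explicit scheme; the one genuinely new ingredient is that the resolvent $(I-\Delta t Q)^{-1}$ must be shown to be a nonnegative, row-stochastic matrix, which is precisely what removes the CFL restriction. First I would rewrite \eqref{implicit}: moving the implicit contributions to the left and recalling $u_i^{k+1}=\rho_i^{k+1}/\pi_i$ gives $(1+\Delta t\tilde\lambda_i)u_i^{k+1}-\Delta t\sum_{j\in VF(i)}\tilde\lambda_i\tilde P_{ji}u_j^{k+1}=u_i^k$, which is exactly $(I-\Delta t Q)u^{k+1}=u^k$ with the same $Q=\{b_{ij}\}$ as in \eqref{matrix_ex}, since $\sum_j b_{ij}=0$. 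This proves (ii) with no extra work.

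Next I would analyze $A:=I-\Delta t Q$. Its diagonal entries are $1+\Delta t\tilde\lambda_i>0$, its off-diagonal entries $-\Delta t\tilde\lambda_i\tilde P_{ji}\le 0$, and because the rows of $Q$ sum to $0$ we have $A\mathbf 1=\mathbf 1$; in particular $\sum_{j\ne i}|A_{ij}|=\Delta t\tilde\lambda_i<1+\Delta t\tilde\lambda_i=A_{ii}$, so $A$ is strictly diagonally dominant and hence invertible, and being a strictly diagonally dominant $Z$-matrix with positive diagonal it is a nonsingular M-matrix, so $A^{-1}\ge 0$ entrywise. Together with $A^{-1}\mathbf 1=\mathbf 1$ this says $A^{-1}$ is row-stochastic. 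Items (iii) and (iv) then follow immediately from $u^{k+1}=A^{-1}u^k$: row-stochasticity gives $\max_i u_i^{k+1}\le\max_i u_i^k$, and applying the same to $u^{k+1}-\mathbf 1=A^{-1}(u^k-\mathbf 1)$ gives the $\ell^\infty$-contraction. For the conservation law (i) I would multiply \eqref{implicit} by $\pi_i|\tilde C_i|$, sum over $i$, and use the detailed balance identity \eqref{db_cfl} to cancel the gain term against the loss term, so that $\sum_i\rho_i^{k+1}|\tilde C_i|=\sum_i\rho_i^k|\tilde C_i|$; equivalently, $\{\pi_i|\tilde C_i|\}$ is a left null vector of $Q$ because $Q$ is self-adjoint in the weighted inner product $\langle u,v\rangle_{\pi|\tilde C|}=\sum_i u_iv_i\pi_i|\tilde C_i|$, which is the same computation as in Proposition \ref{decay}.

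Finally, for the exponential convergence (v): since $\tilde\lambda_i>0$ for all $i$ and the Voronoi tessellation makes the graph on $\{\my_i\}$ strongly connected, the matrix $cI+\Delta t Q$ is irreducible with nonnegative entries for $c$ large, and detailed balance \eqref{db_cfl} makes $Q$ self-adjoint in $\langle\cdot,\cdot\rangle_{\pi|\tilde C|}$, so its eigenvalues $\nu_k$ are real; Perron--Frobenius applied to $cI+\Delta t Q$ shows $0$ is the simple, principal eigenvalue of $Q$ with eigenvector $\mathbf 1$ and all other $\nu_k<0$. Hence the eigenvalues of $(I-\Delta t Q)^{-1}$ are $\mu_1=1$ (simple, eigenvector $\mathbf 1$) together with $(1-\Delta t\nu_k)^{-1}\in(0,1)$, so $|\mu_2|<1$. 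Expanding $u^0-\mathbf 1$ in the $\langle\cdot,\cdot\rangle_{\pi|\tilde C|}$-orthonormal eigenbasis — its $\mathbf 1$-component vanishes because the initial data satisfies the mass condition — we get $u^k-\mathbf 1=(I-\Delta t Q)^{-k}(u^0-\mathbf 1)=\sum_{k\ge 2}c_k\mu_k^k u_k$, and since all norms on a finite-dimensional space are equivalent this yields $\|\rho^k_i/\pi_i-1\|_{\ell^\infty}\le c\,|\mu_2|^k$, as claimed.

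\textbf{Main obstacle.} The crux is the M-matrix / resolvent step: proving that $(I-\Delta t Q)^{-1}$ is nonnegative and row-stochastic for \emph{every} $\Delta t>0$, which is exactly the structural reason this implicit scheme is unconditionally stable while the explicit scheme of Lemma \ref{explicit-lem} needs \eqref{tm_cfl}. A secondary, purely bookkeeping point is the careful application of detailed balance \eqref{db_cfl} in both the conservation law and the self-adjointness used to locate the spectrum.
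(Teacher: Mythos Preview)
Your proposal is correct and follows essentially the same route as the paper, which simply says ``the proof of the two lemmas are the same as Proposition~\ref{error bound for the unconditional stable explicit scheme} and we omit it.'' The one place where you go slightly beyond what the paper writes is your M-matrix argument showing that $(I-\Delta t Q)^{-1}$ is entrywise nonnegative and row-stochastic; this is indeed the extra structural step needed for the implicit scheme, since unlike \eqref{559semi} one cannot read off the update as a convex combination until $I-\Delta t Q$ has been inverted. The paper's implied alternative, mirroring \eqref{tm547}, would be the equivalent elementary argument: if $u_{i_0}^{k+1}=\max_i u_i^{k+1}$, then from $(1+\Delta t\tilde\lambda_{i_0})u_{i_0}^{k+1}=u_{i_0}^k+\Delta t\tilde\lambda_{i_0}\sum_j\tilde P_{i_0 j}u_j^{k+1}\le u_{i_0}^k+\Delta t\tilde\lambda_{i_0}u_{i_0}^{k+1}$ one gets $\max_i u_i^{k+1}\le\max_i u_i^k$ directly. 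Either way gives (iii)--(iv), and your treatment of (i), (ii), (v) matches the paper's template exactly.
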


The proof of the two lemmas is same as Proposition \ref{error bound for the unconditional stable explicit scheme} and we omit it. 
{ The advantage of the explicit scheme \eqref{explicit} is its efficiency but the disadvantage is the requirement of CFL condition on $\Delta t$; see \eqref{tm_cfl}. Indeed, the convergence rate for the explicit scheme \eqref{explicit} is very slow since the spectral gap vanishes as $\Delta t \to 0.$  
On the other hand, the unconditionally stable implicit scheme \eqref{implicit} gives the exponential convergence \eqref{exp_im} with fast rate when we take $\Delta t$ large enough but it is  time-consuming to solve the inverse matrix in practice. }

{ \section{Computations of source term in von Mises-Fisher's ground-truth distribution}\label{app_von}
Recall the definition of von Mises-Fisher's distribution with oscillated parameters. We compute the source term in \eqref{source46}
\begin{equation}
\begin{aligned}
g(\theta, \varphi, t) = &\rho \Big[ \kappa^2 \eta^2_\theta -\kappa \eta  +    \kappa    \eta_\theta {\cot \theta}  + \frac{1}{\sin^2 \theta}(\kappa^2 \eta^2_\varphi + \kappa \eta_{\varphi\varphi} )- \frac{C'}{C} \kappa'- (\kappa' \eta + \kappa \eta_t)  \Big],\\
&C'/C =     \frac{\sinh \kappa - \kappa \cosh \kappa}{\kappa \sinh\kappa}.
\end{aligned}
\end{equation}
Using
\begin{align*}
\eta_\theta = -\cos a \sin \theta + \sin a \cos \theta \cos(\varphi -b), \quad \eta_{\theta\theta} =-\cos a \cos \theta - \sin a \sin \theta \cos (\varphi-b) = -\eta,\\
\eta_\varphi = - \sin a \sin \theta \sin (\varphi -b), \quad \eta_{\varphi\varphi} =  -\sin a \sin \theta \cos (\varphi -b) = \cos a \cos \theta - \eta,\\
\eta_t = -a'\sin a \cos \theta + a' \cos a \sin \theta \cos(\varphi-b) + b' \sin a \sin \theta \sin(\varphi-b).
\end{align*}
We obtain
\begin{align*}
\frac{g(\theta, \varphi, t)}{\rho} = &   \kappa^2 [\eta^2_\theta + \frac{\eta_\varphi^2}{\sin^2\theta} ] +\kappa [ -\eta  +     \eta_\theta {\cot \theta}  - \frac{\sin a  \cos (\varphi -b)}{\sin \theta}   ] - \frac{C'}{C} \kappa'- (\kappa' \eta + \kappa \eta_t) ,\\
=&   \kappa^2 [\eta^2_\theta + \sin^2 a \sin^2(\varphi-b) ] +\kappa [ -\eta  +     \eta_\theta {\cot \theta}  - \frac{\sin a  \cos (\varphi -b)}{\sin \theta}  ] - \frac{C'}{C} \kappa'- (\kappa' \eta + \kappa \eta_t) ,\\
=&   \kappa^2 [\eta^2_\theta + \sin^2 a \sin^2(\varphi-b) ] +\kappa [ -\eta     -\cos a\cos \theta - \sin a\sin \theta \cos(\varphi -b)     ] - \frac{C'}{C} \kappa'- (\kappa' \eta + \kappa \eta_t) ,\\
=&   \kappa^2 [\eta^2_\theta + \sin^2 a \sin^2(\varphi-b) ] -2\kappa  \eta   - \frac{C'}{C} \kappa'- (\kappa' \eta + \kappa \eta_t), 
\end{align*}
which gives the source term \eqref{gss}.
}

\end{document}